\documentclass{amsart}

\usepackage[left=1.5in, right=1.5in]{geometry}
\usepackage{graphicx}
\usepackage{amsmath}
\usepackage{amsfonts}
\usepackage{amsthm}
\usepackage{enumitem}
\usepackage{amssymb}
\usepackage{url}
\usepackage{color}
\usepackage{mathrsfs}
\usepackage{mathtools}
\usepackage{mathabx}
\usepackage{dsfont}

\numberwithin{equation}{section}

\newtheorem{thm}{Theorem}[section]
\newtheorem{lem}[thm]{Lemma}
\newtheorem{prop}[thm]{Proposition}
\newtheorem{cor}[thm]{Corollary}

\theoremstyle{definition}
\theoremstyle{definition}
\theoremstyle{definition}\newtheorem*{remark}{Remark}

\newcommand{\norm}[1]{\left\lVert#1\right\rVert}
\newcommand{\RR}{\mathbb{R}}

\newcommand{\CC}{\mathbb{C}}
\newcommand{\ZZ}{\mathbb{Z}}
\newcommand{\NN}{\mathbb{N}}
\newcommand{\Intr}{\displaystyle\int}
\newcommand{\Sumn}{\displaystyle\sum}
\newcommand{\Suma}{\Sumn_{\alpha\in R_+}}
\newcommand*\diff{\mathop{}\!\mathrm{d}}

\newcommand{\al}{\alpha}
\newcommand{\alx}{\langle\alpha,x\rangle}

\newcommand{\DT}{\mathcal{D}_k}
\newcommand{\IntN}{\Intr_{\RR^N}}

\newcommand\restr[2]{{
  \left.\kern-\nulldelimiterspace  #1
  \vphantom{\big|}
  \right|_{#2}
  }}
\newcommand{\DSS}{H^1_\mathcal{D}(\RR^N)}
\newcommand{\DS}[1]{H^{#1}_\mathcal{D}}

\newcommand{\Rn}{\mathbb R^{N}}
\def\G{{\mathbb G}}

\begin{document}

 \title[Rellich, GN, Trudinger and CKN inequalities for Dunkl operators]
 {Rellich, Gagliardo-Nirenberg, Trudinger and Caffarelli-Kohn-Nirenberg inequalities for Dunkl operators and applications}

\author[A. Velicu]{Andrei Velicu}
\address{
  Andrei Velicu:
  \endgraf
Department of Mathematics
  \endgraf
  Imperial College London
  \endgraf
  180 Queen's Gate, London SW7 2AZ
  \endgraf
  United Kingdom
  \endgraf
  {\it E-mail address} {\rm a.velicu15@imperial.ac.uk}
  }

\author[N. Yessirkegenov]{Nurgissa Yessirkegenov}
\address{
  Nurgissa Yessirkegenov:
  \endgraf
  Institute of Mathematics and Mathematical Modelling
  \endgraf
  125 Pushkin str.
  \endgraf
  050010 Almaty
  \endgraf
  Kazakhstan
  \endgraf
  and
  \endgraf
  Department of Mathematics
  \endgraf
  Imperial College London
  \endgraf
  180 Queen's Gate, London SW7 2AZ
  \endgraf
  United Kingdom
  \endgraf
  {\it E-mail address} {\rm n.yessirkegenov15@imperial.ac.uk}
  }

\thanks{The second author was supported by the MESRK grant AP05133271. No new data was collected or generated
during
the course of research.}

     \keywords{Dunkl operator, Caffarelli-Kohn-Nirenberg inequality, Gagliardo-Nirenberg inequality, Rellich inequality, Trudinger inequality, uncertainty principle}
     \subjclass[2010]{26D10, 35L71, 35L75, 42B10, 43A32}

\begin{abstract} In this paper we obtain weighted higher order Rellich, weighted Gagliardo-Nirenberg, Trudinger, Caffarelli-Kohn-Nirenberg inequalities and the uncertainty principle for Dunkl operators. Moreover, we introduce an extension of the classical Caffarelli-Kohn-Nirenberg inequalities. Furthermore, we give an application of Gagliardo-Nirenberg inequality to the Cauchy problem for the nonlinear damped wave equations for the Dunkl Laplacian.
\end{abstract}

\maketitle

\tableofcontents

\section{Introduction}
\label{SEC:intro}

The aim of this paper is to introduce an extension of the classical Caffarelli-Kohn-Nirenberg (CKN) inequalities with respect to ranges of parameters and to show analogues of the weighted higher order Rellich, Gagliardo-Nirenberg, Trudinger, CKN inequalities and the uncertainty principle for Dunkl operators. Moreover, we give an application of Gagliardo-Nirenberg inequality to the Cauchy problem for the nonlinear damped wave equations for the Dunkl Laplacian.

A systematic study of functional inequalities for Dunkl operators was started by the first author in \cite{Vel18} and \cite{Vel19}, which contain a thorough account of Sobolev and Hardy inequalities in the Dunkl setting. Apart from these main inequalities, the first paper contains a particular case of the Gagliardo-Nirenberg (in Corollary 5.7), while the second paper treated the basic case of the Rellich inequality with sharp constants (in Theorem 7.1) and a particular case of the Caffarelli-Kohn-Nirenberg inequality (in Theorem 7.2). After the proof of the latter Theorem we remarked that our result could be further generalised.

In this paper we fill in the gaps of the previous results by providing the details of the promised extension of the Caffarelli-Kohn-Nirenberg inequality, as well as related inequalities. In terms of the Rellich inequality, we generalise the previous results firstly by proving a weighted version, and also by considering higher powers of the Dunkl Laplacian. 

Similarly, for the Gagliardo-Nirenberg inequality we provide several extensions, both in the range of the coefficients used, and also by providing weighted results. We remark that similar results have been studied also by Mejjaoli: a special case of the Gagliardo-Nirenberg inequality for Dunkl operators was proved in \cite{Mejjaoli08}, and several other particular cases and related results also appear in \cite{Mejjaoli13} and \cite{Mejjaoli14}. However, our approaches for obtaining such Gagliardo-Nirenberg inequalities below are different than in these papers.

A remarkable consequence of our chosen method in the Caffarelli-Kohn-Nirenberg inequality is that in the particular case $k \equiv 0$, when Dunkl operators reduce to the usual partial derivatives, we obtain an extension of the classical result with respect to the range of parameters.

For the convenience of the reader let us now briefly recapture the main results of the paper. As usual, in this paper $A\lesssim B$ means that there exists a positive constant $c$ such that $A\leq c B$. If $A\lesssim B$ and $B\lesssim A$, then we write $A\approx B$. In these notations, if the left and right hand sides feature some functions $f$, the constant (using this notation) does not depend on $f$.

\subsection{Caffarelli-Kohn-Nirenberg inequalities}
\label{sub_SEC:CKN}

Recall the classical Caffarelli-Kohn-Nirenberg inequality \cite{CKN84}:

\begin{thm}\label{clas_CKN}
Let $N\in\mathbb{N}$ and let $p$, $q$, $r$, $a$, $b$, $d$, $\delta\in \mathbb{R}$ such that $p,q\geq1$, $r>0$, $0\leq\delta\leq1$, and
\begin{equation}\label{clas_CKN0}
\frac{1}{p}+\frac{a}{N},\, \frac{1}{q}+\frac{b}{N},\, \frac{1}{r}+\frac{c}{N}>0,
\end{equation}
where $c=\delta d + (1-\delta) b$. Then there exists a positive constant $C$ such that
\begin{equation}\label{clas_CKN1}
\||x|^{c}f\|_{L^r(\Rn)}\leq C \||x|^{a}|\nabla f|\|^{\delta}_{L^p(\Rn)} \||x|^{b}f\|^{1-\delta}_{L^q(\Rn)}
\end{equation}
holds for all $f\in C_c^{\infty}(\Rn)$, if and only if the following conditions hold:
\begin{equation}\label{clas_CKN2}
\frac{1}{r}+\frac{c}{N}=\delta \left(\frac{1}{p}+\frac{a-1}{N}\right)+(1-\delta)\left(\frac{1}{q}+\frac{b}{N}\right),
\end{equation}
\begin{equation}\label{clas_CKN3}
a-d\geq 0 \quad {\rm if} \quad \delta>0,
\end{equation}
\begin{equation}\label{clas_CKN4}
a-d\leq 1 \quad {\rm if} \quad \delta>0 \quad {\rm and} \quad \frac{1}{r}+\frac{c}{N}=\frac{1}{p}+\frac{a-1}{N}.
\end{equation}
\end{thm}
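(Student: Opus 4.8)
The plan is to establish the two implications separately: first the (easy) necessity of \eqref{clas_CKN2}--\eqref{clas_CKN4} by scaling and by testing on explicit families of functions, then the inequality itself by combining a divergence-theorem identity with Hölder's inequality, handling a few degenerate parameter ranges via the classical Sobolev inequality.

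\emph{Necessity.} I would obtain the dimensional balance \eqref{clas_CKN2} from scaling: substituting $f(\lambda\,\cdot\,)$ for $f$ in \eqref{clas_CKN1} and recording the power of $\lambda$ produced by each of the three weighted norms, the estimate can survive both $\lambda\to 0$ and $\lambda\to\infty$ only if these powers agree, which is exactly \eqref{clas_CKN2}. For \eqref{clas_CKN3} and \eqref{clas_CKN4} I would test \eqref{clas_CKN1} on smooth truncations of $|x|^{-s}$ near the origin and on bumps supported in thin shells $\{|x|\sim R\}$ with $R\to 0,\infty$; comparing the exponents of $R$, respectively determining which $s$ keep all three integrals finite, forces $a-d\ge 0$, and in the borderline case $\frac1r+\frac cN=\frac1p+\frac{a-1}{N}$ additionally $a-d\le 1$.

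\emph{The inequality.} Assuming \eqref{clas_CKN2}--\eqref{clas_CKN4}, the engine will be the identity, valid for $f\in C_c^\infty(\Rn)$ after a routine approximation,
\[
(N+cr)\int_{\Rn}|x|^{cr}|f|^{r}\,dx=-\,r\int_{\Rn}|x|^{cr}|f|^{r-1}\operatorname{sgn}(f)\,\langle x,\nabla f\rangle\,dx,
\]
obtained by integrating $\operatorname{div}\big(x\,|x|^{cr}|f|^{r}\big)$ over $\Rn$; the boundary term at the origin vanishes and the integrals converge there because $N+cr>0$ by \eqref{clas_CKN0}. Using $N+cr>0$ and $|\langle x,\nabla f\rangle|\le|x||\nabla f|$ this yields $\int|x|^{cr}|f|^{r}\lesssim\int|x|^{cr+1}|f|^{r-1}|\nabla f|$. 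I would then write the right-hand integrand as $\big(|x|^{a}|\nabla f|\big)\big(|x|^{b}|f|\big)^{m}\big(|x|^{c}|f|\big)^{r-1-m}$ and apply Hölder in $L^{p}\times L^{q/m}\times L^{r/(r-1-m)}$: matching the Hölder exponents determines $m$, the weights then balance precisely because of \eqref{clas_CKN2}, and---when $m\ge 0$ and the exponents are admissible---absorbing $\||x|^{c}f\|_{L^{r}}^{\,r-1-m}$ into the left side gives $\||x|^{c}f\|_{L^{r}}^{1+m}\lesssim\||x|^{a}|\nabla f|\|_{L^{p}}\||x|^{b}f\|_{L^{q}}^{m}$, i.e.\ \eqref{clas_CKN1} with $\delta=1/(1+m)$.

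\emph{Degenerate cases and the main obstacle.} The choice of $m$ breaks down in several configurations---notably the Sobolev-endpoint case $\frac1r+\frac cN=\frac1p+\frac{a-1}{N}$ and any configuration in which $1/r$ does not lie between $1/p$ and $1/q$---and these I would treat separately. The endpoint estimate $\||x|^{d}f\|_{L^{r}}\lesssim\||x|^{a}|\nabla f|\|_{L^{p}}$ is a weighted Sobolev inequality, reducible to the classical one by a change of variables in the radial variable (or by symmetrisation), and the general $\delta\in(0,1)$ then follows by interpolating it with the trivial $\delta=0$ bound or by combining it with the weighted Hardy inequality via Hölder. I expect the genuine difficulty to be organisational rather than analytic: one must partition the admissible parameter region according to the ordering of $1/p,1/q,1/r$ and the location of the dimensional balance, verify in each region that exactly the relevant hypothesis among \eqref{clas_CKN3}--\eqref{clas_CKN4} makes the chosen argument work (keeping $\delta,1-\delta\in[0,1]$, all intermediate integrals finite, and no contribution from $|x|=0$ or $|x|=\infty$), and check that these regions exhaust all admissible parameters.
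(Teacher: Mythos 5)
This theorem is not proved in the paper at all: it is stated in the introduction as a recollection of the classical result of Caffarelli, Kohn and Nirenberg, with a citation to \cite{CKN84}, and the authors' own contribution begins only with the Dunkl-operator analogues in Section \ref{SEC:CKN}. So there is no proof in the paper to compare your attempt against; your proposal has to be judged as a reconstruction of the original 1984 argument, whose strategy it does correctly identify (scaling for the balance condition \eqref{clas_CKN2}, a divergence identity plus H\"older plus absorption for the inequality, and a separate weighted Sobolev estimate at the endpoint).

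As a proof, however, the proposal is a plan with the hard parts deferred rather than carried out. Concretely: (i) the necessity of \eqref{clas_CKN3} and \eqref{clas_CKN4} is only gestured at --- ``comparing exponents of $R$'' on shells gives \eqref{clas_CKN2} again, and extracting the sign conditions on $a-d$ requires carefully designed test families (this occupies a nontrivial portion of the original paper); (ii) the identity $(N+cr)\int|x|^{cr}|f|^r\,dx=-r\int|x|^{cr}|f|^{r-1}\operatorname{sgn}(f)\langle x,\nabla f\rangle\,dx$ and the subsequent absorption step both degenerate when $0<r<1$, which the hypotheses allow: $|f|^{r-1}$ is singular on the zero set of $f$ and $\|\cdot\|_{L^r}$ is not a norm, so the H\"older/absorption bookkeeping must be redone there; (iii) the three-factor H\"older argument only works when the resulting exponent $m$ lies in an admissible range, and you have not verified that the admissible range of $m$ is exactly characterised by \eqref{clas_CKN3}--\eqref{clas_CKN4}, nor that the remaining parameter configurations are all covered by the weighted Sobolev endpoint plus interpolation --- you explicitly flag this case analysis as the main difficulty and leave it open. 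If your goal is to supply a proof of Theorem \ref{clas_CKN}, the honest course is either to complete this case analysis or to do what the paper does and cite \cite{CKN84}.
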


In this paper we extend the Caffarelli-Kohn-Nirenberg inequalities by enlarging the range of indices \eqref{clas_CKN0}. This will be achieved by using Dunkl theory methods. More precisely, we prove \eqref{clas_CKN1} with the usual gradient $\nabla f$ replaced by the Dunkl gradient $\nabla_k f$ (when $k=0$, $\nabla = \nabla_k$).

To illustrate our extension of Theorem \ref{clas_CKN}, let us state our Caffarelli-Kohn-Nirenberg inequalities:
\begin{itemize}[wide, labelwidth=!, labelindent=0pt]

\item ({\bf Caffarelli-Kohn-Nirenberg I inequality}) Let $1<p<N$, $1<q<\infty$, $0<r<\infty$ with $p+q\geq r$. Let also $\delta\in[0,1]\cap\left[\frac{r-q}{r},\frac{p}{r}\right]$ and $b$, $c\in\mathbb{R}$. Assume that $\frac{\delta r}{p}+\frac{(1-\delta)r}{q}=1$ and $ c=-\delta+b(1-\delta)$. Then there exists a positive constant $C$ such that
\begin{equation}\label{CKN1_ineq1_intro}
\||x|^{c}f\|_{L^r(\Rn)}\leq C\|\nabla f\|^{\delta}_{L^p(\Rn)}\||x|^{b}f\|^{1-\delta}_{L^q(\Rn)}
\end{equation}
holds for all $f\in C_c^{\infty}(\Rn)$.

\item ({\bf Caffarelli-Kohn-Nirenberg II inequality}) Let $q,r,a,b,c,\delta \in \RR$ such that $1<q<\infty$, $0<r<\infty$, $\frac{2N}{N-2}+q\geq r$, $\delta\in[0,1]\cap\left[\frac{r-q}{r},\frac{2N}{r(N-2)} \right]$ and $ N+2a-2>0$. Assume also that
$\frac{\delta r(N-2)}{2N}+\frac{(1-\delta)r}{q}=1$ and $c=\delta a+b(1-\delta)$. Then there exists a positive constant $C$ such that
\begin{equation}\label{CKN2_ineq1_intro}
\||x|^{c}f\|_{L^r(\Rn)}\leq C \||x|^{a}\nabla f\|^{\delta}_{L^2(\Rn)}\||x|^{b}f\|^{1-\delta}_{L^q(\Rn)}
\end{equation}
holds for all $f\in C_c^{\infty}(\Rn)$.

\item ({\bf Fractional Caffarelli-Kohn-Nirenberg inequality}) Let $a,b,c \in \RR$, $\delta\in[0,1]\cap\left[\frac{r-q}{r},\frac{2}{r} \right]$, $1<q<\infty$ and $0<r<\infty$ with $2+q\geq r$. Assume that $\frac{\delta r}{2}+\frac{(1-\delta)r}{q}=1$ and $c=\delta (a-1)+b(1-\delta)$, and also $1-N/2<a\leq 1$. Then we have the following fractional Caffarelli-Kohn-Nirenberg type inequalities for all $f\in \mathcal{S}(\Rn)$:
\begin{equation}\label{CKN3_ineq1_intro}
\||x|^{c}f\|_{L^r(\Rn)}\leq\frac{1}{C(a)^{\delta}} \|(-\Delta)^{(1-a)/2}f\|^{\delta}_{L^2(\Rn)}\||x|^{b}f\|^{1-\delta}_{L^q(\Rn)},
\end{equation}
where the constant $C(a)$ (with $\gamma=0$) is given in Theorem \ref{CKN3_thm}.
\end{itemize}

For example, in the special case $1<p=q=r<N$, $b=-\frac{N}{p}$ and $c=-\frac{\delta (N-p)-N}{p}$, we get from \eqref{CKN1_ineq1_intro} for all $f\in C_{0}^{\infty}(\mathbb{R}^{N})$ that
\begin{equation}\label{CKN_example1}
\left\|\frac{f}{|x|^{\frac{\delta (N-p)-N}{p}}}\right\|_{L^p(\Rn)}\lesssim
\|\nabla f\|^{\delta}_{L^p(\Rn)}
\left\|\frac{f}{|x|^{\frac{N}{p}}}\right\|^{1-\delta}_{L^p(\Rn)}, \quad 0\leq\delta\leq1.
\end{equation}
Since we have
$$\frac{1}{q}+\frac{b}{N}=\frac{1}{p}+\frac{1}{N}\left(-\frac{N}{p}\right)=0,$$
we see that \eqref{clas_CKN0} fails, so that the inequality \eqref{CKN_example1} is not covered by Theorem \ref{clas_CKN}.


\begin{remark} For other extended Caffarelli-Kohn-Nirenberg inequalities, we refer to \cite{RSY17_ext_fr} and \cite{RSY18_ext_trans} for $f\in C_{0}^{\infty}(\G\setminus\{0\})$ on general homogeneous group, to \cite{RSY17_strat} on stratified groups, to \cite{RY18_hypo} for general hypoelliptic differential operators, to \cite{RY18_gen_Lie} on general Lie group, to \cite{RY18_Riem} on Riemannian manifolds with negative curvature and references therein.
\end{remark}

In Section \ref{SEC:CKN} we show \eqref{CKN1_ineq1_intro}-\eqref{CKN3_ineq1_intro} as
a consequence of general results for Dunkl operators.

\subsection{Gagliardo-Nirenberg, Trudinger and Rellich inequalities}
\label{sub_SEC:GN}

The Gagliardo-Nirenberg inequality was proved independently by Gagliardo \cite{Gagliardo} and Nirenberg \cite{Nirenberg} and it states that
\begin{equation} \label{classicalGN}
\norm{\nabla^j f}_{L^q(\Rn)} \leq C \norm{\nabla^l f}_{L^r(\Rn)}^\theta \norm{f}_{L^p(\Rn)}^{1-\theta},
\end{equation}
for all $f\in W^{l,r}(\RR^N)\cap L^p(\RR^N)$, where $1\leq p \leq \infty$, $j,l\in\NN$ and $\frac{j}{l} \leq \theta \leq 1$ are such that
$$ \frac{1}{q} = \frac{j}{N} + \theta \left( \frac{1}{r} - \frac{l}{N} \right) + \frac{1-\theta}{p}.$$
There is an exceptional case when $r>1$ and $l-j-\frac{N}{r}$ is a non-negative integer, in which case we must also require $\theta <1$. See \cite{FFRS} for historical remarks on this result and a complete proof, and also the references therein for an account of the many applications of the Gagliardo-Nirenberg inequality to PDEs.

Assuming the notation specific to Dunkl theory, which we introduce in section \ref{SEC:prelim}, our results for Dunkl operators are the following:

\begin{itemize}[wide, labelwidth=!, labelindent=0pt]

\item ({\bf Weighted higher order Rellich inequality}) Let $j\in \mathbb{N}$, $a+2(j-1)+1\leq b\leq a+2(j-1)+2$ and $p=\frac{2(N+2\gamma)}{N+2\gamma-2+2(b-(a+2(j-1)+1))}$, and suppose that $a+2(j-1)+1<\frac{N+2\gamma-2}{2}$. Then, for any $f\in C_{c}^{\infty}(\Rn)$ we have the inequality
\begin{equation}\label{high_Rellich_ineq1_intro}
\left\|\frac{f}{|x|^{b}}\right\|_{p}\lesssim \left\|\frac{\Delta_{k}^{j}f}{|x|^{a}}\right\|_{2}.
\end{equation}

\item ({\bf Uncertainty type principle}) Let $1<p<\frac{N+2\gamma}{1+2\gamma}$ and $\frac{1}{p}+\frac{1}{q}=1$. Then, for any $f\in C_c^{\infty}(\Rn)$, we have the inequality
\begin{equation}\label{uncer_1_intro}
\left(\int_{\Rn}|\nabla_{k}f|^{p}\diff\mu_{k}\right)^{\frac{1}{p}}\left(\int_{\Rn}|x|^{q}|f|^{q} \diff\mu_{k}\right)^{\frac{1}{q}} \gtrsim \int_{\Rn}|f|^{2}\diff\mu_{k}.
\end{equation}

\item ({\bf Gagliardo-Nirenberg inequality I}) Let $1\leq p,q < \infty$, $1\leq r <N+2\gamma$, and $0\leq \theta \leq 1$ satisfying the assumption $\theta \left(\frac{1}{N+2\gamma}  + \frac{1}{p}-\frac{1}{r}\right) =\frac{1}{p}-\frac{1}{q}$. Then the following inequality holds for all $f\in \DSS \cap L^p(\mu_k)$
\begin{equation} \label{GN0_ineq1_intro}
\norm{f}_{q} \lesssim \norm{\nabla_k f}_r^\theta \norm{f}_p^{1-\theta}.
\end{equation}

\item ({\bf Gagliardo-Nirenberg inequality II}) Let $1<p<\infty$, $0 \leq s \leq \frac{N+2\gamma}{p}$ and $0\leq \theta\leq 1$. Then, for any $f \in \mathcal{S}(\RR^N)$, we have the inequality
\begin{equation} \label{GN1_ineq1_intro}
 \norm{(-\Delta_k)^{s(1-\theta)/2}f}_p
	\lesssim \norm{(-\Delta_k)^{s/2}f}_p^{1-\theta} \norm{f}_p^\theta.
\end{equation}

\item ({\bf Weighted Gagliardo-Nirenberg I inequality}) Let $1<p<\frac{N+2\gamma}{2+2\gamma}$, $2\leq s\leq \frac{N+2\gamma}{p}$, and define $q=\frac{p(N+2\gamma)}{N+2\gamma-p}$. Then, for any $f\in C_c^\infty(\RR^N)$
we have
\begin{equation}\label{GN2_ineq1_intro}
\left\|\frac{f}{|x|}\right\|_{q}\lesssim \|(-\Delta_{k})^{s/2}f\|_{p}^{2/s}\|f\|^{1-2/s}_{p}.
\end{equation}

\item ({\bf Weighted Gagliardo-Nirenberg II inequality}) Assume that $N+2\gamma>2$. Let $1\leq a\leq 2$, $2\leq s\leq \frac{N+2\gamma}{2}$, and define $p=\frac{2(N+2\gamma)}{N+2\gamma-2+2(a-1)}$. Then, for any $f\in C_c^\infty(\RR^N \setminus \{0\})$
we have
\begin{equation}\label{GN3_ineq1_intro}
\left\|\frac{f}{|x|^{a}}\right\|_{p}\lesssim \|(-\Delta_{k})^{s/2}f\|_{2}^{2/s}\|f\|^{1-2/s}_{2}.
\end{equation}

\item ({\bf Weighted Gagliardo-Nirenberg III inequality}) Let $0\leq a\leq s \leq \frac{N+2\gamma}{2}$. Then, for any $f\in \mathcal{S}(\Rn)$
we have
\begin{equation}\label{GN4_ineq1_intro}
\left\|\frac{f}{|x|^{a}}\right\|_{2}\lesssim \|(-\Delta_{k})^{s/2}f\|_{2}^{a/s}\|f\|^{1-a/s}_{2}.
\end{equation}

\item ({\bf Trudinger inequality})
Let $1<p<\infty$, and $\frac{1}{p}+\frac{1}{p'}=1$. Then, there exist constants $a,C>0$ such that for any $f\in\mathcal{S}(\RR^N)$ that satisfies the assumption
\begin{equation} \label{trudinger_assumption_intro}
\norm{(-\Delta_k)^{(N+2\gamma)/(2p)} f}_p \leq 1,
\end{equation}
we have the inequality
\begin{equation} \label{trudinger_ineq_intro}
\int_{\RR^N} \left( \exp(a |f(x)|^{p'}) - \sum_{\substack{ 0\leq j <p-1 \\ j\in\NN}} \frac{1}{j!} (a |f(x)|^{p'})^j \right) \diff\mu_k(x) 
\leq C \norm{f}_p^p.
\end{equation}

\end{itemize}
In the special case when $\gamma=0$, hence $\Delta_{k}=\Delta$, the usual gradient in $\RR^N$, the inequality \eqref{trudinger_ineq_intro} was investigated in \cite{ogawa1990proof} for $p=2$, in \cite{ogawa1991trudinger} for $p=N=2$, in \cite{adachi2000trudinger} for $p=N\geq2$ and in \cite{Ozawa95} for $p$ and $N$ as \eqref{trudinger_ineq_intro}. We also refer to \cite{RY17_Trudinger} and \cite{RY18_hypo} for hypoelliptic versions of Trudinger inequalities, and to \cite{RY18_Riem} as well as to references therein on Riemannian manifolds with negative curvature.

\subsection{An Application}

Finally, using the Gagliardo-Nirenberg inequality we study the Cauchy problem for the following nonlinear damped wave equations for the Dunkl Laplacian
\begin{equation} \label{Cauchy_problem}
\begin{cases}
\partial_{tt}^2 u(t,x) - \Delta_k u(t,x) + b \partial_t u(t,x) + m u(t,x) = f(u)(t,x) \\
u(0,x)=\epsilon u_1(x) \\
\partial_t u(0,x) = \epsilon u_2(x),
\end{cases}
\end{equation}
with nonlinearity satisfying the assumptions $f(0)=0$ and
$$ |f(a)-f(b)|\leq C (|a|^{p-1} + |b|^{p-1}) |a-b|,$$
for some $p>1$. We first study the linear version of this equation (corresponding to $f=0$) using techniques based on the Dunkl transform, and then we prove a global existence and uniqueness result for the general Cauchy problem \eqref{Cauchy_problem} with small data.

The paper is organised as follows. In Section \ref{SEC:prelim} we briefly introduce Dunkl operators and their important properties. Rellich, Gagliardo-Nirenberg and Caffarelli-Kohn-Nirenberg inequalities are then discussed in Sections \ref{SEC:Rellich}, \ref{SEC:GN} and \ref{SEC:CKN}, respectively. Finally, in Section \ref{SEC:Appl} we discuss an application to a nonlinear damped wave equation.

\section{Preliminaries}
\label{SEC:prelim}

We present in this section a very quick introduction to Dunkl operators; see the excellent survey papers \cite{Rosler} and \cite{Anker} for more details.

In this paper, a root system is a finite set $R\subset \RR^N\setminus \{0\}$ that satisfies $R \cap \alpha \RR = \{ -\alpha, \alpha\}$ and $\sigma_\alpha(R) = R$ for all $\alpha\in R$. Here $\sigma_\alpha$ is the reflection in the hyperplane orthogonal to the root $\alpha$, i.e.,
$$ \sigma_\alpha x = x - 2 \frac{\alx}{\langle \alpha,\alpha \rangle} \alpha.$$
The reflection group generated by all $\sigma_\alpha$ for $\alpha\in R$ is finite, and we call it $G$.

A multiplicity function is a map $k:R \to [0,\infty)$, denoted $\alpha \mapsto k_\alpha$, which is $G$-invariant, i.e., $k_\alpha=k_{g\alpha}$ for all $g\in G$ and all $\alpha\in R$. Any root system $R$ can be written as a disjoint union $R=R_+ \cup (-R_+)$, and we call $R_+$ a positive subsystem. Of course there are in general many choices for the positive subsystem $R_+$, but once fixed, this choice will not make a difference in the definition below due to the $G$-invariance of the multiplicity function.

From now on we fix a root system $R$ in $\RR^N$ with positive subsystem $R_+$ and multiplicity function $f$. For simplicity, we also assume without loss of generality that $|\alpha|^2=2$ for all roots $\al \in R$. Dunkl operators are defined on $C^1(\RR^N)$ by
$$ T_i f(x) = \partial_i f(x) + \Suma k_\alpha \alpha_i \frac{f(x)-f(\sigma_\alpha x)}{\alx},$$
for each $i=1,\ldots, N$. The Dunkl gradient and Dunkl laplacian are defined as $\nabla_k=(T_1,\ldots, T_N)$ and $\Delta_k = \displaystyle\sum_{i=1}^N T_i^2$, respectively. Note that in the special case $k=0$ Dunkl operators reduce to partial derivatives, and $\nabla_0=\nabla$ and $\Delta_0=\Delta$ are the usual gradient and laplacian.


The natural spaces to study Dunkl operators are the weighted spaces $L^p(\mu_k)$, where $\diff\mu_k =w_k(x) \diff x$, and the weight function is given by
$$ w_k(x) = \prod_{\alpha\in R_+} |\alx|^{2k_\alpha}.$$
This is a homogeneous function of degree
$$ \gamma := \Suma k_\alpha.$$
The measure of the weighted spaces $L^p(\mu_k)$ will be written simply $\norm{\cdot}_p$. With respect to this weighted measure we have the integration by parts formula
$$ \IntN T_i(f) g \diff\mu_k = - \IntN f T_i(g) \diff\mu_k.$$
%


A Sobolev inequality is available for the Dunkl gradient (see \cite{Vel18}).

\begin{thm}  \label{sobolev}
Let $1 \leq p < N+2\gamma$ and $q=\frac{p(N+2\gamma)}{N+2\gamma-p}$. Then there exists a constant $C>0$ such that we have the inequality
$$ \norm{f}_q \leq C \norm{\nabla_k f}_p \qquad \forall f\in C_c^\infty(\RR^N).$$
\end{thm}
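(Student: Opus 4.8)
The plan is to prove the inequality by the classical potential-theoretic scheme transplanted to the Dunkl setting, in which the role of the Euclidean dimension $N$ is played by the homogeneous dimension $N+2\gamma$ of the space $(\Rn,\mu_k)$.

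\emph{Step 1 (ultracontractivity of the Dunkl heat semigroup).} The semigroup $H_t=e^{t\Delta_k}$ acts by Dunkl convolution with the Dunkl heat kernel, which obeys a sharp Gaussian-type upper bound; in particular $\norm{H_tg}_\infty\lesssim t^{-(N+2\gamma)/2}\norm{g}_1$. Combined with the contractions $\norm{H_t}_{p\to p}\le 1$ and the Riesz--Thorin theorem, this gives $\norm{H_t}_{p\to q}\lesssim t^{-\frac{N+2\gamma}{2}\left(\frac1p-\frac1q\right)}$ for $1\le p\le q\le\infty$. This is essentially the only place the value $N+2\gamma$ enters.

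\emph{Step 2 (Hardy--Littlewood--Sobolev for the Dunkl--Riesz potential).} Define $I_1=(-\Delta_k)^{-1/2}=c\int_0^\infty H_t\,t^{-1/2}\diff t$ on $C_c^\infty(\Rn)$. Splitting this integral at a threshold depending on $g$ and invoking the bounds of Step~1 --- and, for the strong-type conclusion, using additionally that $(\Rn,|\cdot|,\mu_k)$ is a space of homogeneous type of dimension $N+2\gamma$, so that the kernel of $I_1$ is of standard fractional-integral size --- yields $\norm{I_1g}_q\lesssim\norm{g}_p$ whenever $1<p<N+2\gamma$ and $\frac1q=\frac1p-\frac1{N+2\gamma}$. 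Equivalently, $\norm{h}_q\lesssim\norm{(-\Delta_k)^{1/2}h}_p$ in this range.

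\emph{Step 3 (passing from $(-\Delta_k)^{1/2}$ to $\nabla_k$).} The Dunkl--Riesz transforms $R_j=T_j(-\Delta_k)^{-1/2}$ are Calder\'on--Zygmund type operators adapted to the Dunkl structure and are bounded on $L^p(\mu_k)$ for $1<p<\infty$. Since on the Dunkl transform side $(-\Delta_k)^{1/2}=\sum_{j=1}^N R_j^{*}T_j$, we obtain $\norm{(-\Delta_k)^{1/2}f}_p\lesssim\sum_j\norm{T_jf}_p\lesssim\norm{\nabla_kf}_p$; combining with Step~2 proves the theorem for $1<p<N+2\gamma$.

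\emph{The endpoint $p=1$ and the main obstacle.} For $p=1$ Steps~2--3 only deliver the weak-type estimate $\norm{f}_{q,\infty}\lesssim\norm{\nabla_kf}_1$, since $I_1$ and the $R_j$ fail to be of strong type $(1,1)$. In the Euclidean case this is upgraded to the strong inequality by Maz'ya's truncation argument, which crucially exploits the chain rule $|\nabla(\phi\circ f)|\le(\mathrm{Lip}\,\phi)\,|\nabla f|$. Dunkl operators are non-local and admit no such pointwise inequality, so this is the genuinely delicate point: one must instead argue directly, e.g.\ by exploiting the positivity and convolution structure of the Dunkl heat kernel to control truncations of $f$, or by establishing a suitable Dunkl co-area/isoperimetric inequality. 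I would expect this endpoint --- rather than the range $1<p<N+2\gamma$ --- to be the hard part of the proof.
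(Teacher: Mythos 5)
First, note that the paper does not prove this theorem at all: it is stated in Section 2 as a known result imported from \cite{Vel18}, so there is no internal argument to compare yours with. Judged on its own terms, your scheme for the range $1<p<N+2\gamma$ is a legitimate and natural route in the Dunkl setting: the heat kernel is explicit and positive, so the ultracontractive bound $\norm{H_t}_{1\to\infty}\lesssim t^{-(N+2\gamma)/2}$ and the $L^p$-contractivity you interpolate are available; the $(L^p,L^q)$ bound for the Dunkl--Riesz potential is exactly the result of \cite{HMS} and \cite{GIT2017} that the paper itself quotes in Section 5; and the $L^p(\mu_k)$-boundedness of the Dunkl--Riesz transforms ($1<p<\infty$) is a known theorem (due to Amri and Sifi), which makes Step 3 sound, provided you state that you are invoking these external results rather than proving them.

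The genuine gap is the endpoint $p=1$, which is part of the statement ($1\leq p<N+2\gamma$) and is not dispensable for this paper: Theorem \ref{sobolev} is applied in the proof of Theorem \ref{GNineq} with integrability index $r$ allowed to equal $1$. Your Steps 2--3 collapse there, since neither the Riesz potential $\mathcal{I}_1$ nor the Riesz transforms are of strong type at $p=1$, and, as you yourself observe, the classical Maz'ya truncation upgrade of the weak-type $(1,\tfrac{N+2\gamma}{N+2\gamma-1})$ bound uses a pointwise chain rule that the nonlocal operators $T_j$ do not obviously satisfy. Identifying this obstacle is not the same as overcoming it: as written, the proposal establishes a strictly weaker statement than the theorem. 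To close the gap you would need either a usable substitute for the chain rule (for instance an a.e.\ control of $\nabla_k$ applied to $|f|$ or to the truncations $\min(|f|,\lambda)$ in terms of $\nabla_k f$, which must be proved, not assumed), or a genuinely different argument for the $L^1$ case such as the one in \cite{Vel18}, where the full range $1\leq p<N+2\gamma$ is established; simply flagging the endpoint as ``the hard part'' leaves the theorem unproved as stated.
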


The Dunkl kernel $E_k(x,y)$, defined on $\CC^N\times\CC^N$, acts as a generalisation of the usual exponential $e^{\langle x,y\rangle}$, and is defined, for fixed $y\in\CC^N$, as the unique solution $Y=E_k(\cdot, y)$ of the system of equations
$$ T_iY=y_i Y, \qquad i=1,\ldots N,$$
which is real analytic on $\RR^N$ and satisfies $Y(0)=1$. Another way to define the Dunkl exponential is in terms of the intertwining operator $V_k$ which connects the usual partial derivatives to Dunkl operators via the relation
$$ T_i V_k = V_k \partial_i.$$
The Dunkl exponential can then be equivalently defined as
$$ E_k(x,y) = V_k \left( e^{\langle \cdot, y \rangle} \right) (x).$$

It is in general difficult to characterise the Dunkl kernel $E_k$, but we have the bounds
$$ |\partial_y^\beta E_k(x,y)| \leq |x|^{|\beta|} \displaystyle\max_{g\in G} e^{\text{Re}\langle gx,y\rangle},$$
which hold for all $x\in\RR^N$, $y\in\CC^N$, and all $\beta\in\ZZ^N_+$.

With the help of these bounds, we can define the Dunkl transform on $L^1(\mu_k)$ by
$$ \mathcal{D}_k(f)(\xi)= \frac{1}{M_k}\Intr_{\RR^N} f(x)E_k(-i\xi,x)\diff \mu_k(x), \qquad \text{ for all } \xi\in\RR^N,$$
where
$$ M_k = \IntN e^{-|x|^2/2} \diff\mu_k(x)$$
is the Macdonald-Mehta integral. The Dunkl transform extends to an isometric isomorphism of $L^2(\mu_k)$; in particular, the Plancherel formula holds. When $k=0$ the Dunkl transform reduces to the Fourier transform.

The Sobolev spaces associated to Dunkl operators are the spaces $\DS{s}$ defined for all $s>0$ and which consist of all functions $f\in L^2(\mu_k)$ for which the map $\xi \mapsto |\xi|^s \DT(f)(\xi)$ belongs to $L^2(\mu_k)$. Then $\DS{s}$ is a Banach space with norm
$$ \norm{f}_{\DS{s}}^2 := \IntN |\DT(f)(\xi)|^2(1+|\xi|^2)^s \diff\mu_k(\xi).$$

\section{Weighted higher order Rellich inequalities}
\label{SEC:Rellich}
In this section we prove the uncertainty type principle and the Rellich inequalities.

The classical Rellich inequality for Dunkl operators was studied in \cite{Vel19} (see also the references therein) where the following result was proved:

\begin{thm} \label{classicalRellich}
Suppose that $N+2\gamma \neq 2$. Then, for any $f\in C_c^\infty(\RR^N \setminus \{0\})$, we have the inequality
$$ \norm{\Delta_k f}_2^2 \geq \frac{(N+2\gamma)^{2}(N+2\gamma-4)^2}{16} \norm{\frac{f}{|x|^2}}_2^2.$$
The constant in this inequality is sharp.
\end{thm}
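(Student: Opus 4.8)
The plan is to reduce the classical Rellich inequality to the Hardy inequality, applied twice, using the Dunkl Laplacian in place of the Euclidean one. Since Theorem~\ref{classicalRellich} concerns $\norm{\Delta_k f}_2$ against $\norm{|x|^{-2}f}_2$, the natural route is: first relate $\norm{\Delta_k f}_2^2$ to $\norm{|x|^{-1}\nabla_k f}_2^2$ (an integration-by-parts identity), and then apply the Dunkl Hardy inequality to control $\norm{|x|^{-1}\nabla_k f}_2$ from below. The key structural ingredients I would invoke are: the integration-by-parts formula $\IntN T_i(f)g\,\diff\mu_k = -\IntN f\,T_i(g)\,\diff\mu_k$; the commutation/Leibniz-type properties of $T_i$ acting on radial multipliers such as $|x|^{-\beta}$; and the homogeneity of the weight $w_k$, which makes the effective dimension $N+2\gamma$ rather than $N$.

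First I would write $\norm{\Delta_k f}_2^2 = \IntN (\Delta_k f)^2 \diff\mu_k$ and, after integrating by parts and using $\Delta_k = \sum_i T_i^2$, rewrite this in terms of $\nabla_k$ and second-order Dunkl expressions; the goal of this step is to obtain a lower bound of the shape
\begin{equation*}
\norm{\Delta_k f}_2^2 \;\geq\; \left(\frac{N+2\gamma-4}{2}\right)^2 \left\|\frac{\nabla_k f}{|x|}\right\|_2^2 + (\text{nonnegative terms}),
\end{equation*}
obtained by integrating $(\Delta_k f)\cdot(|x|^{-2}\langle x,\nabla_k f\rangle)$ or a similar weighted pairing and discarding a manifestly nonnegative remainder (an $L^2$ square of some combination of second-order terms and reflection differences). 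Establishing that the discarded terms are genuinely nonnegative in the Dunkl setting — where the extra reflection part of $T_i$ does not obey the ordinary product rule — is where I expect the main subtlety to lie, and one must be careful that the cross-terms produced by the nonlocal part of $T_i$ either vanish by $G$-invariance of $\mu_k$ or assemble into a perfect square.

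Second I would apply the Dunkl Hardy inequality in the form $\left\||x|^{-1}g\right\|_2 \leq \frac{2}{N+2\gamma-2}\norm{\nabla_k g}_2$ (valid for $N+2\gamma\neq 2$, which is exactly our hypothesis), taking $g=f$ componentwise or applying a vector-valued version, to get $\left\||x|^{-1}\nabla_k f\right\|_2 \geq \frac{N+2\gamma-2}{2}\left\||x|^{-2}f\right\|_2$ up to handling the commutator between $\nabla_k$ and the radial weight. Combining the two steps yields the constant $\left(\frac{(N+2\gamma-2)(N+2\gamma-4)}{4}\right)^2$, but to land on the stated sharp constant $\frac{(N+2\gamma)^2(N+2\gamma-4)^2}{16}$ I would instead pair $\Delta_k f$ directly against $|x|^{-4}f$ (not $|x|^{-2}f$) and integrate by parts twice, expanding $\Delta_k(|x|^{-4}f)$ via the Dunkl product rule for the radial factor; the exact weighted pairing is chosen precisely so that the emerging square term has the optimal coefficient. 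Finally, for sharpness I would test the inequality on a one-parameter family $f_\varepsilon(x) = |x|^{-(N+2\gamma-4)/2}\,\eta(\varepsilon x)$ (suitably truncated away from $0$ and $\infty$ to stay in $C_c^\infty(\RR^N\setminus\{0\})$), for which both sides are comparable and the ratio tends to the claimed constant as $\varepsilon\to0$; the homogeneity degrees match because $|x|^{-(N+2\gamma-4)/2}$ is exactly the borderline power for the weighted measure $\diff\mu_k$.
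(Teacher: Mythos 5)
First, note that the paper itself does not prove Theorem \ref{classicalRellich}: it is quoted from \cite{Vel19}, where the sharp constant is obtained by decomposing into Dunkl spherical ($h$-)harmonics and reducing to one-dimensional radial inequalities in each sector. So your proposal must stand on its own. Its non-sharp core is sound and is essentially how the paper proves its weighted Rellich inequality (Theorem \ref{weigh_Rellich_thm}): two applications of the weighted Hardy inequality give $\norm{\Delta_k f}_2 \gtrsim \norm{|x|^{-1}\nabla_k f}_2 \gtrsim \norm{|x|^{-2}f}_2$, with resulting constant $\frac{(N+2\gamma-2)^2(N+2\gamma-4)^2}{16}$, which you correctly observe is strictly smaller than the sharp one. (Minor slip: you attach $\frac{N+2\gamma-2}{2}$ to the step $\norm{|x|^{-1}\nabla_k f}_2\geq c\,\norm{|x|^{-2}f}_2$, whereas the weighted Hardy inequality with weight $|x|^{-1}$ gives $\frac{N+2\gamma-4}{2}$ there; the $\frac{N+2\gamma-2}{2}$ belongs to the first step. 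The product is the same.)

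The genuine gaps are in the step meant to recover the sharp constant. First, pairing $\Delta_k f$ against $|x|^{-4}f$ cannot work: under $f\mapsto f(\lambda\,\cdot)$ the quantity $\int \Delta_k f\cdot|x|^{-4}f\,\diff\mu_k$ scales like $\lambda^{6-N-2\gamma}$, while both sides of the target inequality scale like $\lambda^{4-N-2\gamma}$, so no identity built on that pairing can yield the Rellich inequality. The classical route pairs against $|x|^{-2}f$, proves the Hardy--Rellich type bound $-\int |x|^{-2}f\,\Delta_k f\,\diff\mu_k\geq\frac{(N+2\gamma)(N+2\gamma-4)}{4}\int|x|^{-4}f^2\,\diff\mu_k$, and concludes by Cauchy--Schwarz. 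Second, and more seriously, the difficulty you yourself flag is not resolved by ``discarding a manifestly nonnegative remainder'': carrying out the integration by parts (using that $|x|^{-2}$ is $G$-invariant, so $T_i(|x|^{-2}f)=|x|^{-2}T_if+f\,\partial_i(|x|^{-2})$) one finds $f\,T_if-\tfrac{1}{2}T_i(f^2)=\tfrac{1}{2}\sum_{\alpha\in R_+}k_\alpha\alpha_i\frac{(f(x)-f(\sigma_\alpha x))^2}{\langle\alpha,x\rangle}$, which produces the term $-\sum_{\alpha\in R_+}k_\alpha\int|x|^{-4}(f(x)-f(\sigma_\alpha x))^2\,\diff\mu_k$ in the identity for $-\int|x|^{-2}f\,\Delta_k f\,\diff\mu_k$ --- i.e.\ with the \emph{unfavourable} sign. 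It must be absorbed by a matching positive remainder in the Dunkl Hardy inequality or bypassed via the $h$-harmonic decomposition, and your sketch supplies no mechanism for this; as written the argument does not close. Your sharpness test family ($f\approx|x|^{-(N+2\gamma-4)/2}$ truncated at $0$ and $\infty$) is the correct one, since that is the borderline power for the measure $\diff\mu_k$ of homogeneity $N+2\gamma$.
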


In this section we will generalise this result firstly by adding a weight, and secondly by considering higher powers of the Laplacian. The first result is the following:

\begin{thm}\label{weigh_Rellich_thm}
Let $a+1\leq b\leq a+2$ and $p=\frac{2(N+2\gamma)}{N+2\gamma-2+2(b-(a+1))}$, and suppose that $a+1<\frac{N + 2\gamma-2}{2}$. Then, for any $f\in C_c^{\infty}(\Rn)$ we have the inequality
\begin{equation}\label{weigh_Rellich_ineq1}
\left\|\frac{f}{|x|^{b}}\right\|_{p}\lesssim \left\|\frac{\Delta_{k}f}{|x|^{a}}\right\|_{2}.
\end{equation}
\end{thm}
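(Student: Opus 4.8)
The plan is to interpolate between two endpoint estimates: a weighted $L^2$ Rellich-type inequality (the case $b = a+2$, $p=2$) and a weighted Sobolev-type inequality (the case $b = a+1$, $p = \frac{2(N+2\gamma)}{N+2\gamma-2}$). For the first endpoint, I would establish
\begin{equation}\label{Rellich-endpoint}
\left\| \frac{f}{|x|^{a+2}} \right\|_2 \lesssim \left\| \frac{\Delta_k f}{|x|^a} \right\|_2,
\end{equation}
which should follow from the classical Dunkl Rellich inequality (Theorem \ref{classicalRellich}) combined with a commutator/weight-shifting argument, using the condition $a+1 < \frac{N+2\gamma-2}{2}$ to keep the relevant weighted Hardy and Rellich constants positive; alternatively one proves \eqref{Rellich-endpoint} directly by integration by parts against the test function $|x|^{-2(a+2)} f$, expanding $\Delta_k(|x|^\lambda)$ and controlling cross terms by weighted Hardy inequalities for the Dunkl gradient. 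For the second endpoint, I would combine the Dunkl Sobolev inequality (Theorem \ref{sobolev}) with $p=2$, namely $\|g\|_{2^*} \lesssim \|\nabla_k g\|_2$ where $2^* = \frac{2(N+2\gamma)}{N+2\gamma-2}$, applied to $g = |x|^{-(a+1)} f$ (or more robustly, iterating two weighted Hardy–Sobolev steps), to get
\begin{equation}\label{Sobolev-endpoint}
\left\| \frac{f}{|x|^{a+1}} \right\|_{2^*} \lesssim \left\| \frac{\Delta_k f}{|x|^a} \right\|_2.
\end{equation}

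Once both endpoints are in hand, the intermediate range $a+1 \le b \le a+2$ follows by writing $b = \theta(a+1) + (1-\theta)(a+2)$ for $\theta \in [0,1]$ and interpolating. Concretely, the exponent $p$ in the statement is exactly the one for which $\frac{1}{p} = \frac{\theta}{2^*} + \frac{1-\theta}{2}$: indeed $b - (a+1) = 1-\theta$, so $N+2\gamma - 2 + 2(b-(a+1)) = (N+2\gamma-2) + 2(1-\theta)$, and a short computation confirms $\frac{1}{p} = \frac{N+2\gamma-2+2(1-\theta)}{2(N+2\gamma)} = \theta \cdot \frac{N+2\gamma-2}{2(N+2\gamma)} + (1-\theta)\cdot\frac12 = \frac{\theta}{2^*} + \frac{1-\theta}{2}$. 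Then for $g := \frac{f}{|x|^a}$-type quantities one applies the weighted interpolation (H\"older's inequality splitting $|x|^{-b} = |x|^{-(a+1)\theta} |x|^{-(a+2)(1-\theta)}$ inside the $L^p$ integral) to obtain
\begin{equation*}
\left\| \frac{f}{|x|^b} \right\|_p \le \left\| \frac{f}{|x|^{a+1}} \right\|_{2^*}^{\theta} \left\| \frac{f}{|x|^{a+2}} \right\|_2^{1-\theta} \lesssim \left\| \frac{\Delta_k f}{|x|^a} \right\|_2^{\theta} \left\| \frac{\Delta_k f}{|x|^a} \right\|_2^{1-\theta} = \left\| \frac{\Delta_k f}{|x|^a} \right\|_2,
\end{equation*}
which is \eqref{weigh_Rellich_ineq1}.

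The main obstacle I anticipate is establishing the weighted $L^2$ endpoint \eqref{Rellich-endpoint} with the weight $|x|^{-a}$ on the right-hand side rather than the homogeneous form of Theorem \ref{classicalRellich}: the Dunkl Laplacian does not commute with multiplication by powers of $|x|$, so the integration-by-parts expansion of $\int |x|^{-2a-4}|f|^2 \,d\mu_k$ against $\Delta_k f$ produces several error terms involving $\langle \nabla_k f, \nabla_k(|x|^{-2a-4})\rangle$ and $f \,\Delta_k(|x|^{-2a-4})$, which must be absorbed using Cauchy–Schwarz together with weighted Hardy inequalities for $\nabla_k$; the constraint $a+1 < \frac{N+2\gamma-2}{2}$ is precisely what guarantees these auxiliary weighted Hardy (and the underlying Rellich) constants are strictly positive, so the bookkeeping has to be done carefully to see that all constants stay finite. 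A cleaner route, if available from the cited companion papers, would be to quote a ready-made weighted Rellich inequality for Dunkl operators and skip the commutator estimates entirely; otherwise one derives \eqref{Rellich-endpoint} and \eqref{Sobolev-endpoint} in tandem as weighted Hardy–Rellich–Sobolev chains. Either way, once the two endpoints are secured the interpolation step is routine.
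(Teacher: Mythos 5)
Your interpolation skeleton is sound: the H\"older splitting of $|x|^{-b}$ between the weights $|x|^{-(a+1)}$ and $|x|^{-(a+2)}$ with exponents $2^*=\frac{2(N+2\gamma)}{N+2\gamma-2}$ and $2$ does reproduce the stated $p$, and your arithmetic checks out. The genuine gap is that the two endpoint inequalities,
\begin{equation*}
\left\|\frac{f}{|x|^{a+2}}\right\|_2 \lesssim \left\|\frac{\Delta_k f}{|x|^a}\right\|_2
\qquad\text{and}\qquad
\left\|\frac{f}{|x|^{a+1}}\right\|_{2^*} \lesssim \left\|\frac{\Delta_k f}{|x|^a}\right\|_2,
\end{equation*}
are exactly the cases $b=a+2$ and $b=a+1$ of the theorem being proved; they carry the entire content of the statement, and you establish neither. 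Both of your sketches reduce to a weighted Hardy--Rellich bound $\norm{\nabla_k f/|x|^{a+1}}_2\lesssim\norm{\Delta_k f/|x|^{a}}_2$ (for the Sobolev endpoint you additionally need the Leibniz rule $T_i(gf)=gT_if+f\partial_i g$, which does hold here only because $|x|^{-(a+1)}$ is $G$-invariant), and the integration-by-parts/commutator route you propose for that bound is genuinely delicate in the Dunkl setting: $\Delta_k$ is nonlocal, the expansion of $\int |x|^{-2(a+1)}|\nabla_k f|^2\,\diff\mu_k$ against $\Delta_k f$ produces cross terms that must be absorbed back into the left-hand side, and the absorption closes only if the relevant constants are quantitatively small, not merely positive. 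Flagging this as ``the main obstacle'' and leaving the bookkeeping undone means the proof is not complete.

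The ready-made input you wish for is Theorem \ref{weigh_Hardy_thm}, quoted from \cite{Vel19} and stated in the paper immediately before this theorem: $\norm{f/|x|^{b}}_p\lesssim\norm{\nabla_k f/|x|^{a}}_2$ for $a\le b\le a+1$, $p=\frac{2(N+2\gamma)}{N+2\gamma-2+2(b-a)}$ and $a<\frac{N+2\gamma-2}{2}$. The paper's proof consists of two applications of this single inequality: first with $a$ replaced by $a+1$ --- note that the hypotheses of the present theorem are exactly those of Theorem \ref{weigh_Hardy_thm} shifted by one, so the \emph{whole} range $a+1\le b\le a+2$ is covered in one stroke and no endpoint/interpolation decomposition is needed --- and then with $b=a+1$, $p=2$ to pass from $\nabla_k f/|x|^{a+1}$ to $\Delta_k f/|x|^{a}$. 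Your endpoint-plus-interpolation architecture is a correct but unnecessary detour, and it founders precisely at the step where the paper's route quotes an existing result.
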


\begin{remark} In the special case $a=0$ and $b=p=2$, this result gives Theorem \ref{classicalRellich}, and this result, of course, implies classical Rellich inequalities when $\gamma = 0$, hence $\Delta_{k}=\Delta$ and $\diff\mu_{k}=\diff x$.
\end{remark}

\begin{remark} Similar inequalities to \eqref{weigh_Rellich_ineq1} for sub-Laplacians with
drift on stratified groups were investigated in \cite{RY19_drift}.
\end{remark}

Before we prove this inequality, we need the following Caffarelli-Kohn-Nirenberg type result from \cite{Vel19}.

\begin{thm}\label{weigh_Hardy_thm}
Let $a\leq b\leq a+1$ and $p=\frac{2(N+2\gamma)}{N+2\gamma-2+2(b-a)}$, and suppose that $a<\frac{N+2\gamma-2}{2}$. Then, for any $f\in C_{c}^{\infty}(\Rn)$ we have the inequality
\begin{equation}\label{weigh_Hardy_ineq1}
\left\|\frac{f}{|x|^{b}}\right\|_{p}\lesssim \left\|\frac{\nabla_{k}f}{|x|^{a}}\right\|_{2}.
\end{equation}
\end{thm}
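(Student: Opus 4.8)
The goal is to prove the weighted Caffarelli--Kohn--Nirenberg / Hardy inequality \eqref{weigh_Hardy_ineq1} stated in Theorem \ref{weigh_Hardy_thm}, which controls $\|f/|x|^b\|_p$ by $\|\nabla_k f/|x|^a\|_2$ in the regime $a\le b\le a+1$ with the critical exponent $p=\frac{2(N+2\gamma)}{N+2\gamma-2+2(b-a)}$. The two extreme cases are archetypes: when $b=a$ we have $p=\frac{2(N+2\gamma)}{N+2\gamma-2}$, a weighted \emph{Sobolev}-type inequality, and when $b=a+1$ we have $p=2$, a weighted \emph{Hardy}-type inequality. The strategy is to prove both endpoints directly and then fill in the intermediate range by interpolation or by a single unified computation; I expect a single integration-by-parts argument can in fact cover the whole range.

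\textbf{Step 1: the Hardy endpoint $b=a+1$, $p=2$.}
First I would establish the weighted Hardy inequality $\||x|^{-(a+1)}f\|_2\lesssim \||x|^{-a}\nabla_k f\|_2$. The natural device is to test the radial vector field $x/|x|^2$ against $|f|^2$ using the Dunkl integration-by-parts formula $\int T_i(\phi)\psi\,\diff\mu_k=-\int \phi\, T_i(\psi)\,\diff\mu_k$. Concretely I would compute $\sum_i T_i\!\bigl(\tfrac{x_i}{|x|^{2(a+1)}}\bigr)$ against $|f|^2$ (or $f^2$ for real $f$), extract the singular term $\||x|^{-(a+1)}f\|_2^2$ with a positive constant coming from the homogeneity degree, and then use Cauchy--Schwarz on the remaining cross term $\int |x|^{-(2a+1)} f\,\nabla_k f$. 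The constraint $a+1<\frac{N+2\gamma-2}{2}$ is exactly what guarantees the coefficient of the leading term is positive and the constant finite; this is where the dimensional threshold $N+2\gamma$ enters, since $T_i(x_j|x|^{-s})$ produces the degree factor $N+2\gamma-s$ after one applies the key identity $\sum_i T_i(x_i g) = (N+2\gamma)g + \sum_i x_i T_i g$ valid in the Dunkl calculus.

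\textbf{Step 2: the Sobolev endpoint $b=a$ and the interpolation.}
For $b=a$, $p=\frac{2(N+2\gamma)}{N+2\gamma-2}$, the inequality is a weighted version of the Dunkl--Sobolev inequality of Theorem \ref{sobolev}. I would either invoke a weighted Sobolev inequality directly or reduce to the unweighted case by absorbing the weight $|x|^{-a}$; the homogeneous degree bookkeeping (the weight $w_k$ is homogeneous of degree $2\gamma$, so scaling sees dimension $N+2\gamma$) makes the exponent $p$ the scaling-critical one, which is a strong consistency check. Once both endpoints hold, the intermediate range $a<b<a+1$ follows by interpolation between $L^2$ and the Sobolev exponent, or more cleanly by running the integration-by-parts argument of Step~1 with the general exponent and applying H\"older's inequality (rather than Cauchy--Schwarz) at the critical exponent $p$ to close the estimate in one stroke. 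Either way the exponent relation is forced by scaling homogeneity, so no fine-tuning of constants is needed beyond tracking signs.

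\textbf{The main obstacle.}
The delicate point is that the Dunkl operators $T_i$ are nonlocal: $T_i(|x|^{-s})$ is \emph{not} simply the classical derivative, because of the reflection difference quotients $k_\alpha\alpha_i\frac{g(x)-g(\sigma_\alpha x)}{\langle\alpha,x\rangle}$. Since $|x|^{-s}$ and $|\langle\alpha,x\rangle|$-type weights are $G$-invariant (as $|\sigma_\alpha x|=|x|$), the radial parts behave well and the difference quotients of $|x|^{-s}$ vanish, which is precisely why the clean degree identity $\sum_i T_i(x_i g)=(N+2\gamma)g+\sum_i x_iT_ig$ survives. The genuine care is needed where $f$ itself is tested: the cross terms involving $f(x)-f(\sigma_\alpha x)$ must be controlled, and one must verify that $\nabla_k f$ (not merely $\nabla f$) appears on the right-hand side with the correct sign after reassembling. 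Handling these reflection terms carefully, and confirming that the positivity of $\sum_{\alpha\in R_+}k_\alpha\frac{(f(x)-f(\sigma_\alpha x))^2}{\langle\alpha,x\rangle^2}$-type contributions works \emph{in our favour} rather than against us, is the step I expect to require the most attention. Fortunately, Theorem \ref{weigh_Hardy_thm} is quoted from \cite{Vel19}, so I would follow that reference's Dunkl integration-by-parts computation and simply adapt the weight exponents to the stated range.
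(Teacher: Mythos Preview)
The paper does not prove Theorem \ref{weigh_Hardy_thm} at all: it is stated explicitly as ``the following Caffarelli--Kohn--Nirenberg type result from \cite{Vel19}'' and then used as a black box in the proofs of Theorems \ref{weigh_Rellich_thm} and \ref{CKN2_thm}. So there is no ``paper's own proof'' to compare your proposal against; the paper treats this as a quoted result.

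Your sketch is a reasonable outline of how such an inequality is typically proved, and you yourself note at the end that the result is quoted from \cite{Vel19} and that you would follow that reference. One small slip: in Step 1 you write ``the constraint $a+1<\frac{N+2\gamma-2}{2}$'', but the hypothesis in the theorem is $a<\frac{N+2\gamma-2}{2}$, equivalently $a+1<\frac{N+2\gamma}{2}$; your version is strictly stronger and would exclude part of the admissible range. Also, the ``absorb the weight $|x|^{-a}$ into $f$'' idea for the Sobolev endpoint is more delicate in the Dunkl setting than you suggest, since $T_i$ does not satisfy the ordinary product rule; the fact that $|x|^{-a}$ is $G$-invariant helps (the difference-quotient terms vanish on it), but the cross terms arising from $T_i(|x|^{-a}f)$ still need to be controlled, and this is exactly the place where the argument in \cite{Vel19} does the real work.
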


\begin{proof}[Proof of Theorem \ref{weigh_Rellich_thm}] Note that the numbers $a+1$, $b$ and $p$ satisfy the assumptions of Theorem \ref{weigh_Hardy_thm}, so we have from \eqref{weigh_Hardy_ineq1} that
\begin{equation}\label{weigh_Rellich_ineq1_1}
\left\|\frac{f}{|x|^{b}}\right\|_{p}\lesssim \left\|\frac{\nabla_{k}f}{|x|^{a+1}}\right\|_{2}.
\end{equation}
On the other hand, since $a+1<\frac{N+2\gamma-2}{2}$, hence $a<\frac{N+2\gamma-2}{2}$, by \eqref{weigh_Hardy_ineq1} again we have
\begin{equation}\label{weigh_Rellich_ineq1_2}
\left\|\frac{\nabla_{k}f}{|x|^{a+1}}\right\|_{2}\lesssim \left\|\frac{\Delta_{k}f}{|x|^{a}}\right\|_{2}.
\end{equation}
Thus, combining \eqref{weigh_Rellich_ineq1_1} and \eqref{weigh_Rellich_ineq1_2} we obtain \eqref{weigh_Rellich_ineq1}.
\end{proof}

Now let us introduce weighted higher order Rellich inequality for Dunkl operators.

\begin{thm}\label{high_Rellich_thm}
Let $j\in \mathbb{N}$, $a+2(j-1)+1\leq b\leq a+2(j-1)+2$ and
$$p=\frac{2(N+2\gamma)}{N+2\gamma-2+2(b-(a+2(j-1)+1))},$$
and suppose that $a+2(j-1)+1<\frac{N+2\gamma-2}{2}$. Then, for any $f\in C_c^{\infty}(\Rn)$ we have the inequality
\begin{equation}\label{high_Rellich_ineq1}
\left\|\frac{f}{|x|^{b}}\right\|_{p}\lesssim \left\|\frac{\Delta_{k}^{j}f}{|x|^{a}}\right\|_{2}.
\end{equation}
\end{thm}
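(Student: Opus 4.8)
The plan is to prove Theorem \ref{high_Rellich_thm} by induction on $j$, using the already-established weighted Hardy inequality (Theorem \ref{weigh_Hardy_thm}) repeatedly, exactly in the spirit of the proof of Theorem \ref{weigh_Rellich_thm}. The base case $j=1$ is precisely Theorem \ref{weigh_Rellich_thm} (note that when $j=1$ the quantity $2(j-1)=0$, so the hypotheses and conclusion reduce verbatim to that statement). For the inductive step I would chain together two kinds of estimates: one ``gradient step'' and one ``Laplacian-to-gradient step.''

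First I would observe that the triple $a+2(j-1)+1$, $b$, $p$ satisfies the hypotheses of Theorem \ref{weigh_Hardy_thm} (with the role of ``$a$'' there played by $a+2(j-1)+1$, the role of ``$b$'' unchanged, and the exponent $p$ being exactly the one prescribed), so
\begin{equation*}
\left\|\frac{f}{|x|^{b}}\right\|_{p}\lesssim \left\|\frac{\nabla_{k}f}{|x|^{a+2(j-1)+1}}\right\|_{2}.
\end{equation*}
Next I would repeatedly apply Theorem \ref{weigh_Hardy_thm} in the $L^2\to L^2$ form to trade derivatives for powers of $|x|$: since $a+2(j-1)+1 < \frac{N+2\gamma-2}{2}$ implies $a+2m+1 < \frac{N+2\gamma-2}{2}$ for every $0\leq m\leq j-1$, each application of Theorem \ref{weigh_Hardy_thm} with the exponents chosen so that $p=2$ (which forces the weight index to drop by exactly $1$) yields
\begin{equation*}
\left\|\frac{\nabla_{k}g}{|x|^{c+1}}\right\|_{2}\lesssim \left\|\frac{\nabla_{k}(\nabla_{k}g)}{|x|^{c}}\right\|_{2},
\end{equation*}
and after $2(j-1)+1$ such steps — alternating, schematically, between $\nabla_k$-applications and using $\nabla_k\nabla_k = $ second-order Dunkl derivatives — one reaches $\left\|\frac{\Delta_k^j f}{|x|^a}\right\|_2$. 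Concretely, I would first go from $\left\|\frac{\nabla_k f}{|x|^{a+2(j-1)+1}}\right\|_2$ to $\left\|\frac{\Delta_k f}{|x|^{a+2(j-1)}}\right\|_2$ using two weighted-Hardy steps (as in \eqref{weigh_Rellich_ineq1_2}), then apply the same two-step maneuver to $\Delta_k f$ in place of $f$ to reach $\left\|\frac{\Delta_k^2 f}{|x|^{a+2(j-2)}}\right\|_2$, and iterate $j$ times in total until the weight index has decreased to $a$.

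The main technical point to be careful about — and what I'd flag as the place requiring the most attention — is the bookkeeping that ensures every intermediate triple of indices genuinely satisfies the hypotheses of Theorem \ref{weigh_Hardy_thm}, in particular the strict inequality $(\text{weight index}) < \frac{N+2\gamma-2}{2}$ at each stage and the fact that the $L^2\to L^2$ instances force the exponent to be $p=2$, which pins down the admissible weight drop to exactly $1$ per derivative. The hypothesis $a+2(j-1)+1 < \frac{N+2\gamma-2}{2}$ is designed precisely so that all the lower indices $a+1, a+2, \ldots, a+2(j-1)+1$ that appear along the way are also strictly below that threshold, so this is routine once set up correctly; there is no genuine obstacle beyond organizing the induction cleanly. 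I would also remark, as in the $j=1$ case, that when $\gamma=0$ this recovers classical higher-order Rellich-type inequalities, and I'd note the special case $a=0$, $b=2j$, $p=2$ for contrast with the sharp-constant statement in Theorem \ref{classicalRellich}.
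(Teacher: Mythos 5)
Your proposal is correct and follows essentially the same route as the paper: the paper also argues by induction on $j$ with base case Theorem \ref{weigh_Rellich_thm}, and its inductive step (replace $a$ by $a+2$ in the inductive hypothesis, then apply the $L^2$ form of \eqref{weigh_Rellich_ineq1} to $\Delta_k^m f$) is exactly your chain of weighted Hardy/Rellich steps, just peeled off from the inner end rather than unrolled from the outside. The only nitpick is a harmless miscount: passing from $\left\|\nabla_k f/|x|^{a+2(j-1)+1}\right\|_2$ to $\left\|\Delta_k f/|x|^{a+2(j-1)}\right\|_2$ is one application of \eqref{weigh_Rellich_ineq1_2}, not two, though your total of $2(j-1)+1$ such $L^2$ steps is right.
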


\begin{proof}[Proof of Theorem \ref{high_Rellich_thm}] We prove this theorem by the induction method. When $j=1$ the inequality \eqref{high_Rellich_ineq1} holds true since this is the case of Theorem \ref{weigh_Rellich_thm}. Now, assume this holds for the case $j=m$. Namely, for $m\in \mathbb{N}$, $a+2(m-1)+1\leq b\leq a+2(m-1)+2$, $p=\frac{2(N+2\gamma)}{N+2\gamma-2+2(b-(a+2(m-1)+1))}$ and $a+2(m-1)+1<\frac{N+2\gamma-2}{2}$, the inequality
\begin{equation}\label{high_Rellich_ineq2}
\left\|\frac{f}{|x|^{b}}\right\|_{p}\lesssim \left\|\frac{\Delta_{k}^{m}f}{|x|^{a}}\right\|_{2}
\end{equation}
holds for any $f\in C_c^{\infty}(\Rn)$. So, using this we need to prove the following: For $m\in \mathbb{N}$, $a+2m+1\leq b\leq a+2m+2$, $p=\frac{2(N+2\gamma)}{N+2\gamma-2+2(b-(a+2m+1))}$ and $a+2m+1<\frac{N+2\gamma-2}{2}$, the inequality
\begin{equation}\label{high_Rellich_ineq3}
\left\|\frac{f}{|x|^{b}}\right\|_{p}\lesssim \left\|\frac{\Delta_{k}^{m+1}f}{|x|^{a}}\right\|_{2}
\end{equation}
holds for any $f\in C_{c}^{\infty}(\Rn)$. Replacing $a$ by $a+2$ in \eqref{high_Rellich_ineq2} we get
\begin{equation}\label{high_Rellich_ineq4}
\left\|\frac{f}{|x|^{b}}\right\|_{p}\lesssim \left\|\frac{\Delta_{k}^{m}f}{|x|^{a+2}}\right\|_{2}
\end{equation}
for $a+2m+1\leq b\leq a+2m+2$, $p=\frac{2(N+2\gamma)}{N+2\gamma-2+2(b-(a+2m+1))}$ and $a+2m+1<\frac{N+2\gamma-2}{2}$.

On the other hand, since $a+2m+1<\frac{N+2\gamma-2}{2}$, hence $a+1<\frac{N+2\gamma-2}{2}$, replacing $f$ by $\Delta_{k}^{m}f$ in \eqref{weigh_Rellich_ineq1} one has
\begin{equation}\label{high_Rellich_ineq5}
\left\|\frac{\Delta_{k}^{m}f}{|x|^{a+2}}\right\|_{2}\lesssim \left\|\frac{\Delta_{k}^{m+1}f}{|x|^{a}}\right\|_{2}.
\end{equation}
Thus, \eqref{high_Rellich_ineq4} and \eqref{high_Rellich_ineq5} imply \eqref{high_Rellich_ineq3}.

The proof is complete.
\end{proof}
Now let us give the following uncertainty principle for the Dunkl gradient:
\begin{thm}\label{uncer_thm}
Let $1<p<\frac{N+2\gamma}{1+2\gamma}$ and $\frac{1}{p}+\frac{1}{q}=1$. Then, for any $f\in C_c^{\infty}(\Rn)$, we have the inequality
\begin{equation}\label{uncer_1}
\left(\int_{\Rn}|\nabla_{k}f|^{p}\diff\mu_{k}\right)^{\frac{1}{p}}\left(\int_{\Rn}|x|^{q}|f|^{q}\diff\mu_{k}\right)^{\frac{1}{q}} \gtrsim \int_{\Rn}|f|^{2}\diff\mu_{k}.
\end{equation}
\end{thm}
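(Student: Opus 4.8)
The plan is to deduce \eqref{uncer_1} from the $L^p$ Hardy inequality for the Dunkl gradient together with one application of H\"older's inequality; the restriction $1<p<\frac{N+2\gamma}{1+2\gamma}$ is precisely the range in which that Hardy inequality holds, so no stronger assumption will be needed.

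First I would invoke the Dunkl--Hardy inequality from \cite{Vel18}: for $1<p<\frac{N+2\gamma}{1+2\gamma}$ there is a constant $C>0$ with $\left\|\frac{f}{|x|}\right\|_{p}\leq C\|\nabla_{k}f\|_{p}$ for all $f\in C_c^\infty(\Rn)$. For a self-contained derivation one tests the pointwise identity $\sum_{i=1}^{N}T_i\!\left(x_i|x|^{-p}\right)=(N+2\gamma-p)|x|^{-p}$ (which follows from the product rule $T_i(x_i|x|^{-p})=(T_ix_i)|x|^{-p}+x_i\partial_i|x|^{-p}$, valid because $|x|^{-p}$ is $G$-invariant, together with $\sum_iT_ix_i=N+2\gamma$) against $|f|^p$ and integrates by parts. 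Of the two reflection terms that appear, one vanishes by the $\sigma_\alpha$-invariance of $|x|^{-p}\diff\mu_k$, while the other, $\sum_{\alpha\in R_+}k_\alpha\int_{\Rn}|x|^{-p}|f(x)|^{p-2}f(x)\bigl(f(x)-f(\sigma_\alpha x)\bigr)\diff\mu_k$, is non-negative by the monotonicity of $t\mapsto|t|^{p-2}t$ but is at most $2\left\|\frac{f}{|x|}\right\|_p^p$ by H\"older; hence it can be absorbed into the left-hand side exactly when $N+2\gamma-p-2\gamma p>0$, i.e.\ $p<\frac{N+2\gamma}{1+2\gamma}$, and one further application of H\"older (in the exponents $\tfrac{p}{p-1}$ and $p$) completes this step.

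Second, I would apply H\"older's inequality with the conjugate pair $p,q$. Since $f\in C_c^\infty(\Rn)$ and $p<N+2\gamma$ makes $|x|^{-p}$ locally $\mu_k$-integrable, both factors below are finite and
\begin{equation*}
\int_{\Rn}|f|^{2}\diff\mu_k=\int_{\Rn}\frac{|f|}{|x|}\,|x||f|\diff\mu_k
\leq\left\|\frac{f}{|x|}\right\|_{p}\left(\int_{\Rn}|x|^{q}|f|^{q}\diff\mu_k\right)^{1/q}.
\end{equation*}
Combining this with the Hardy inequality of the first step gives $\int_{\Rn}|f|^{2}\diff\mu_k\leq C\|\nabla_{k}f\|_{p}\bigl(\int_{\Rn}|x|^{q}|f|^{q}\diff\mu_k\bigr)^{1/q}$, which is \eqref{uncer_1}.

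The only real difficulty is concentrated in the Hardy step, and more precisely in the reflection (difference) terms produced by the Dunkl operators under integration by parts: a direct Euler-operator computation carried out with the genuine gradient $\nabla_k$ leaves a non-negative reflection correction that can be absorbed only under the unrelated hypothesis $N>2\gamma$, whereas routing through the $L^p$ Hardy inequality is what yields exactly the hypothesis $1<p<\frac{N+2\gamma}{1+2\gamma}$. Once that inequality is available, the remaining H\"older splitting is entirely elementary.
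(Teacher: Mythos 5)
Your proposal is correct and follows essentially the same route as the paper: combine the $L^p$ Dunkl--Hardy inequality (Theorem \ref{Lp_Hardy_thm}) with a single application of H\"older's inequality for the conjugate pair $p,q$ to split $\int|f|^2\diff\mu_k$ as $\int\frac{|f|}{|x|}\cdot|x||f|\diff\mu_k$. The additional sketch of a self-contained derivation of the Hardy inequality is not needed (the paper simply cites it from \cite{Vel19}), but it does not affect the validity of the argument.
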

\begin{remark}
In the special case when $\gamma=0$ we have $\nabla_{k}=\nabla$, the usual gradient in $\Rn$, then \eqref{uncer_1} with $p=q=2$ and $N\geq 3$ implies the classical uncertainty principle
$$\left(\int_{\Rn}|\nabla f|^{2}\diff x\right)^{\frac{1}{2}}\left(\int_{\Rn}|x|^{2}|f|^{2}\diff x\right)^{\frac{1}{2}} \gtrsim \int_{\Rn}|f|^{2}\diff x$$
for $f\in C_c^{\infty}(\Rn)$.
\end{remark}

Before proving Theorem \ref{uncer_thm}, let us recall the following $L^p$ Hardy inequality from \cite{Vel19}

\begin{thm}\label{Lp_Hardy_thm}
Let $1<p<\frac{N+2\gamma}{1+2\gamma}$. Then, for any $f\in C_c^{\infty}(\Rn)$, we have the inequality
\begin{equation}\label{Lp_Hardy_ineq1}
\left\|\frac{f}{|x|}\right\|_{p}\lesssim \|\nabla_{k}f\|_{p}.
\end{equation}
\end{thm}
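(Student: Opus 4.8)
The plan is to adapt the classical vector‑field proof of the $L^p$ Hardy inequality to the Dunkl setting, the engine being the radial field $V=\frac{x}{|x|^p}$ together with the Dunkl integration by parts formula. The first step is the divergence identity $\sum_i T_i\!\bigl(\frac{x_i}{|x|^p}\bigr)=(N+2\gamma-p)|x|^{-p}$. I would obtain this by a direct computation: since $|x|$, hence $|x|^{-p}$, is reflection invariant, the nonlocal part of $T_i$ applied to $x_i|x|^{-p}$ reduces (using $\frac{x_i-(\sigma_\alpha x)_i}{\alx}=\alpha_i$, valid as $|\alpha|^2=2$) to $|x|^{-p}\Suma k_\alpha\alpha_i^2$, which sums over $i$ to $2\gamma|x|^{-p}$, while the ordinary divergence contributes $(N-p)|x|^{-p}$. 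Equivalently one may use $\frac{x}{|x|^p}=\frac{1}{2-p}\nabla_k(|x|^{2-p})$ and $\Delta_k(|x|^s)=s(s+N+2\gamma-2)|x|^{s-2}$ on radial powers. Writing $D:=N+2\gamma$, this gives $\operatorname{div}_k V=(D-p)|x|^{-p}$.

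Next I would multiply by $|f|^p$, integrate against $\diff\mu_k$, and integrate by parts. Because $V$ is singular at the origin while $f\in C_c^\infty(\Rn)$ need not vanish there, this must be justified by a $G$-invariant radial cutoff (or by regularising $V$ to $x(|x|^2+\varepsilon^2)^{-p/2}$); the boundary contribution at radius $\varepsilon$ scales like $\varepsilon^{D-p}\to 0$ and $\int_{\Rn}\frac{|f|^p}{|x|^p}\diff\mu_k<\infty$, both because $p<D$. Using $\int T_i(g)h\,\diff\mu_k=-\int gT_i(h)\,\diff\mu_k$ gives
$$(D-p)\int_{\Rn}\frac{|f|^p}{|x|^p}\diff\mu_k=-\int_{\Rn}V\cdot\nabla_k(|f|^p)\,\diff\mu_k.$$
The delicate point is that the chain rule fails for $\nabla_k$, so $T_i(|f|^p)$ carries a nonlocal piece. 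However, after contracting with $V$ and using $\sum_i x_i\alpha_i=\alx$, this piece becomes $\frac{1}{|x|^p}\Suma k_\alpha\bigl(|f(x)|^p-|f(\sigma_\alpha x)|^p\bigr)$, whose integral vanishes because $|x|^{-p}\diff\mu_k$ is reflection invariant. Hence only the local part survives, producing the exact identity $(D-p)\int\frac{|f|^p}{|x|^p}\diff\mu_k=-p\int\frac{|f|^{p-2}f\,(x\cdot\nabla f)}{|x|^p}\diff\mu_k$ with the \emph{ordinary} gradient.

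To bring back the Dunkl gradient I would use $x\cdot\nabla f=x\cdot\nabla_k f-\Suma k_\alpha\bigl(f(x)-f(\sigma_\alpha x)\bigr)$, which splits the right-hand side into a principal term and an error term $\mathrm{III}:=p\int\frac{|f|^{p-2}f\,\Suma k_\alpha(f(x)-f(\sigma_\alpha x))}{|x|^p}\diff\mu_k$. Expanding and applying Hölder (with $p'=p/(p-1)$) together with the reflection invariance $\int\frac{|f(\sigma_\alpha x)|^p}{|x|^p}\diff\mu_k=\int\frac{|f|^p}{|x|^p}\diff\mu_k$ bounds $\mathrm{III}\le 2p\gamma\int\frac{|f|^p}{|x|^p}\diff\mu_k$. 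Estimating the principal term by $|x\cdot\nabla_k f|\le|x|\,|\nabla_k f|$ and Hölder yields
$$\bigl(D-p-2p\gamma\bigr)\int_{\Rn}\frac{|f|^p}{|x|^p}\diff\mu_k\le p\left(\int_{\Rn}\frac{|f|^p}{|x|^p}\diff\mu_k\right)^{1/p'}\norm{\nabla_k f}_p,$$
and dividing by the common factor gives \eqref{Lp_Hardy_ineq1} with constant $\frac{p}{N+2\gamma-p(1+2\gamma)}$.

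I expect the main obstacle to be precisely the control of the nonlocal error $\mathrm{III}$: the failure of the Leibniz rule for Dunkl operators means it cannot be discarded, and the bound $\mathrm{III}\le 2p\gamma\int\frac{|f|^p}{|x|^p}\diff\mu_k$ is exactly what forces the coefficient $N+2\gamma-p(1+2\gamma)$ to be positive. Thus the hypothesis $p<\frac{N+2\gamma}{1+2\gamma}$ is what makes the argument close, and it explains why the admissible range is strictly smaller than the naive integrability threshold $p<N+2\gamma$. A secondary technical matter is the rigorous justification of the integration by parts across the singularity of $V$, which I would handle by the $G$-invariant cutoff or regularisation described above.
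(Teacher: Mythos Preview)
Your argument is correct. The divergence identity $\sum_i T_i(x_i|x|^{-p})=(N+2\gamma-p)|x|^{-p}$ is verified exactly as you say, the cancellation of the nonlocal part of $V\cdot\nabla_k(|f|^p)$ after integration is genuine (it relies only on the reflection invariance of $|x|^{-p}\diff\mu_k$), and the bound $\mathrm{III}\le 2p\gamma\int\frac{|f|^p}{|x|^p}\diff\mu_k$ follows from H\"older and the same invariance. One small sharpening you did not state but which your symmetrisation makes available: in fact $\mathrm{III}\ge 0$, since after averaging over $x\leftrightarrow\sigma_\alpha x$ the integrand becomes $(|a|^{p-2}a-|b|^{p-2}b)(a-b)\ge 0$; this does not change the final constant but shows the error term is one-sided. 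You should also be explicit that the chain rule $\nabla(|f|^p)=p|f|^{p-2}f\,\nabla f$ is applied to the \emph{classical} gradient only and requires a standard approximation of $|t|^p$ for general $p>1$.

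As for the comparison: the paper does not prove this statement at all. Theorem~\ref{Lp_Hardy_thm} is merely quoted from \cite{Vel19} as a known input (``let us recall the following $L^p$ Hardy inequality from \cite{Vel19}'') and is used as a black box in the proofs of Theorems~\ref{uncer_thm}, \ref{GN2_thm} and \ref{CKN1_thm}. Your self-contained vector-field proof is therefore more than the present paper offers, and it transparently explains why the threshold is $p<\frac{N+2\gamma}{1+2\gamma}$ rather than the integrability threshold $p<N+2\gamma$: the gap is precisely the cost $2p\gamma$ paid to control the nonlocal remainder $\mathrm{III}$. The proof in \cite{Vel19} follows essentially the same strategy, so your approach matches the original source even though the present paper gives no argument.
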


\begin{remark}
In the case $p=2$, the Hardy inequality \eqref{Lp_Hardy_ineq1} was proven in full generality with respect to the range of the coefficient $\gamma$ in \cite{Vel19}. More precisely, if $N+2\gamma>2$, then the inequality
$$ \norm{\frac{f}{|x|}}_2 \lesssim \norm{\nabla_k f}_2$$
holds for all $f\in C_c^\infty(\RR^N)$. This observation allows us to extend the range of $\gamma$ if $p=2$ in all our results below where Theorem \ref{Lp_Hardy_thm} is employed. 
\end{remark}

\begin{remark} In \cite[Theorem 1.3]{GIT17} the authors prove a Stein-Weiss inequality for the Dunkl Riesz potential (\cite{TX07}), which implies the following Hardy-Sobolev inequality for the Dunkl Laplacian
$$ \norm{|x|^{-s} f}_p \lesssim \norm{(-\Delta_k)^{s/2}f}_p,$$
which holds for all $f$ in the Lizorkin space
$$\Phi_{k}=\left\{f\in \mathcal{S}(\Rn):\int_{\Rn}x^{n}f(x)\diff\mu_{k}(x)=0, \;n\in \mathbb{Z}_{+}^{N}\right\}.$$
\end{remark}

Now we are ready to prove Theorem \ref{uncer_thm}.
\begin{proof}[Proof of Theorem \ref{uncer_thm}] Using H\"{o}lder's inequality we obtain from \eqref{Lp_Hardy_ineq1} that
\begin{equation*}
    \begin{split}
    \left(\int_{\Rn}|\nabla_{k}f|^{p}\diff\mu_{k}\right)^{\frac{1}{p}}\left(\int_{\Rn}|x|^{q}|f|^{q}\diff\mu_{k}\right)^{\frac{1}{q}} &\gtrsim
    \left(\int_{\Rn}\frac{|f|^{p}}{|x|^{p}}\diff\mu_{k}\right)^{\frac{1}{p}}\left(\int_{\Rn}|x|^{q}|f|^{q}\diff\mu_{k}\right)^{\frac{1}{q}}\\&\gtrsim \int_{\Rn}|f|^{2}\diff\mu_{k},
    \end{split}
\end{equation*}
as desired.
\end{proof}

\section{Gagliardo-Nirenberg inequalities}
\label{SEC:GN}
In this section we prove different forms of the Gagliardo-Nirenberg inequality for Dunkl operators. Firstly, in Section \ref{subsection-GN1}, we prove the case $j=0$, $l=1$ of the classical Gagliardo-Nirenberg inequality \eqref{classicalGN}, which follows from the Sobolev inequality, and which is the main tool in the analysis of the nonlinear damped wave equation in the last section. Secondly, in Section \ref{subsection-GN2} we will prove the more involved case $p=q=r$, which relies on Littlewood-Paley theory. Finally, in Section \ref{subsection-GN3}, we prove several weighted forms of the Gagliardo-Nirenberg inequality.

\subsection{The Gagliardo-Nirenberg inequality I} \label{subsection-GN1}

We first prove the Dunkl analogue of the case $j=0$, $l=1$ of the classical Gagliardo-Nirenberg inequality (\ref{classicalGN}).

\begin{thm} \label{GNineq}
Let $1\leq p,q < \infty$, $1\leq r <N+2\gamma$, and $0\leq \theta \leq 1$ satisfying the assumption
\begin{equation} \label{assumptionGN}
\theta \left( \frac{1}{N+2\gamma} + \frac{1}{p} - \frac{1}{r} \right) = \frac{1}{p} - \frac{1}{q}.
\end{equation}
Then the following inequality holds for all $f\in \DSS \cap L^p(\mu_k)$
$$ \norm{f}_{q} \leq C \norm{\nabla_k f}_r^\theta \norm{f}_p^{1-\theta},$$
for some constant $C>0$.
\end{thm}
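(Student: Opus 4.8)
The plan is to derive this from the Sobolev inequality (Theorem \ref{sobolev}) by real interpolation, i.e. a single application of H\"older's inequality. Write $r^{*}=\frac{r(N+2\gamma)}{N+2\gamma-r}$ for the Sobolev exponent attached to $r$, so that Theorem \ref{sobolev} reads $\norm{f}_{r^{*}}\leq C\norm{\nabla_k f}_r$ for $f\in C_c^\infty(\RR^N)$, and record the elementary identity $\frac{1}{r^{*}}=\frac{1}{r}-\frac{1}{N+2\gamma}$. First I would dispose of the endpoints: if $\theta=0$ then \eqref{assumptionGN} forces $q=p$ and the inequality is trivial with $C=1$, while if $\theta=1$ it forces $\frac{1}{q}=\frac{1}{r}-\frac{1}{N+2\gamma}=\frac{1}{r^{*}}$, i.e. $q=r^{*}$, so the inequality is exactly Theorem \ref{sobolev}. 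Hence one may assume $0<\theta<1$.

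The key point is that assumption \eqref{assumptionGN} is equivalent to $\frac{1}{q}=\frac{\theta}{r^{*}}+\frac{1-\theta}{p}$: using $\frac{1}{r^{*}}=\frac{1}{r}-\frac{1}{N+2\gamma}$,
$$\frac{\theta}{r^{*}}+\frac{1-\theta}{p}=\frac{1}{p}-\theta\left(\frac{1}{N+2\gamma}+\frac{1}{p}-\frac{1}{r}\right)=\frac{1}{p}-\left(\frac{1}{p}-\frac{1}{q}\right)=\frac{1}{q}.$$
Since $\frac{\theta}{r^{*}}\geq 0$ and $\frac{1-\theta}{p}\geq 0$, this also yields $\theta q\leq r^{*}$ and $(1-\theta)q\leq p$, so that $\frac{r^{*}}{\theta q}$ and $\frac{p}{(1-\theta)q}$ are exponents $\geq 1$ which are H\"older-conjugate. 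Writing $|f|^{q}=|f|^{\theta q}\,|f|^{(1-\theta)q}$ and applying H\"older's inequality with these two exponents gives the interpolation bound $\norm{f}_{q}\leq\norm{f}_{r^{*}}^{\theta}\,\norm{f}_{p}^{1-\theta}$; combining it with $\norm{f}_{r^{*}}\leq C\norm{\nabla_k f}_r$ establishes $\norm{f}_{q}\leq C\norm{\nabla_k f}_r^{\theta}\,\norm{f}_{p}^{1-\theta}$ for all $f\in C_c^\infty(\RR^N)$.

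Finally I would upgrade from $C_c^\infty(\RR^N)$ to arbitrary $f\in\DSS\cap L^p(\mu_k)$ by a density argument: if $\norm{\nabla_k f}_r=\infty$ there is nothing to prove, and otherwise one approximates $f$ by $f_n\in C_c^\infty(\RR^N)$ with $\nabla_k f_n\to\nabla_k f$ in $L^r(\mu_k)$ and $f_n\to f$ in $L^p(\mu_k)$, passes to a subsequence converging a.e., applies the inequality to $f_n$, and concludes with Fatou's lemma on the left-hand side. The only genuinely delicate point is this last step — arranging a single approximating sequence that converges simultaneously in the $L^r$-norm of the Dunkl gradient and in the $L^p$-norm of the function — whereas the interpolation itself is completely routine once the exponent identity is noted; in fact the cleanest route is to invoke the standard density of $C_c^\infty(\RR^N)$ in the relevant Dunkl--Sobolev space together with the inequality applied to differences $f_n-f_m$ to get a Cauchy sequence in $L^q(\mu_k)$.
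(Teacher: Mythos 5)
Your proposal is correct and follows essentially the same route as the paper: a single H\"older interpolation $\norm{f}_q\leq\norm{f}_{r^*}^{\theta}\norm{f}_p^{1-\theta}$ (the paper's auxiliary exponent $qs\theta$ is precisely your $r^*$) followed by the Dunkl--Sobolev inequality, with the endpoint cases and a density argument handled separately. The only cosmetic difference is that you identify the intermediate exponent as the Sobolev exponent up front, whereas the paper defines it implicitly via the conjugacy relation.
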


\begin{proof}
We note first that it is enough to prove the result for $f\in C_c^\infty(\RR^N)$ and then use a density argument. If $\theta =0$, then $p=q$ and the inequality reduces to the trivial case $\norm{f}_p \leq C \norm{f}_p$. For the rest of the proof we will assume that $\theta>0$.

Since $r<N+2\gamma$, from (\ref{assumptionGN}) we obtain $(1-\theta)q <p$. We can then define $s>0$ by the equation
\begin{equation} \label{firststepdefnGN}
\frac{q(1-\theta)}{p} + \frac{1}{s} =1.
\end{equation}
Then, by H\"older's inequality we have
\begin{equation} \label{GNproofHolder}
\begin{aligned}
\int_{\RR^N} |f|^q \diff\mu_k
&\leq \left(\int_{\RR^N} |f|^{qs\theta} \diff\mu_k \right)^{1/s} \left( \int_{\RR^N} |f|^p \diff\mu_k \right)^{\frac{q(1-\theta)}{p}}
\\
&= \norm{f}_{qs\theta}^{q\theta} \norm{f}_p^{q(1-\theta)}.
\end{aligned}
\end{equation}

At this point, we would like to bound $\norm{f}_{qs\theta}$ from above by $\norm{\nabla_k f}_{r}$, which, together with the above inequality, concludes the Theorem. This can be achieved by the Sobolev inequality \ref{sobolev} as long as $1\leq r < N+2\gamma$ and
$$ \frac{1}{qs\theta} = \frac{1}{r}-\frac{1}{N+2\gamma}.$$
This follows from the assumption (\ref{assumptionGN}) and the definition of $s$ in equation (\ref{firststepdefnGN}). Thus we can indeed apply the Sobolev inequality to obtain
\begin{equation} \label{GNproofSobolev}
\norm{f}_{qs\theta} \leq C \norm{\nabla_k f}_{r}.
\end{equation}
Therefore, from (\ref{GNproofHolder}) and (\ref{GNproofSobolev}) we can conclude that
$$ \norm{f}_q \leq C \norm{\nabla_k f}_r^\theta \norm{f}_p^{1-\theta},$$
which is what we needed to prove.
\end{proof}

We record in the next corollary the very useful particular case when $p=r=2$, while in the second corollary we prove a consequence of Gagliardo-Nirenberg involving the Sobolev norm that will be essential later in the study of the wave equation.

\begin{cor} \label{GNspecialcase}
Suppose $N+2\gamma>2$. Let $2\leq q \leq \frac{2(N+2\gamma)}{N+2\gamma-2}$ and
\begin{equation} \label{GNtheta}
\theta = (N+2\gamma) \frac{q-2}{2q}.
\end{equation}
Then there exists a constant $C>0$ such that the inequality
$$ \norm{f}_q \leq C \norm{\nabla_k f}_2^\theta \norm{f}_2^{1-\theta}$$
holds for any $f\in \DSS$.
\end{cor}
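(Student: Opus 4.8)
The plan is to deduce this corollary directly from Theorem \ref{GNineq} by specializing to $p = r = 2$. First I would record that the domain $\DSS \cap L^p(\mu_k)$ appearing in Theorem \ref{GNineq} coincides with $\DSS$ when $p = 2$: by definition every element of $\DSS = H^1_\mathcal{D}(\RR^N)$ already lies in $L^2(\mu_k)$, so the statement of the corollary is consistent with the ambient hypotheses.

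Next I would verify that the hypotheses of Theorem \ref{GNineq} hold with $p = r = 2$ and with the given $q$ and $\theta$. Since $N + 2\gamma > 2$, we have $1 \leq r = 2 < N + 2\gamma$; moreover $q$ ranges over the finite interval $\left[2, \frac{2(N+2\gamma)}{N+2\gamma-2}\right]$, so $1 \leq q < \infty$. The key point is the balance condition \eqref{assumptionGN}: with $p = r = 2$ it collapses to $\frac{\theta}{N+2\gamma} = \frac{1}{2} - \frac{1}{q} = \frac{q-2}{2q}$, which is exactly the defining formula \eqref{GNtheta} for $\theta$; hence \eqref{assumptionGN} is satisfied by construction.

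It then remains only to check $0 \leq \theta \leq 1$. Writing $\theta = \frac{N+2\gamma}{2}\left(1 - \frac{2}{q}\right)$, we see $\theta$ is increasing in $q$ on $\left[2, \frac{2(N+2\gamma)}{N+2\gamma-2}\right]$, so it suffices to evaluate the endpoints: at $q = 2$ one gets $\theta = 0$, and at $q = \frac{2(N+2\gamma)}{N+2\gamma-2}$ a short computation gives $\frac{q-2}{2q} = \frac{1}{N+2\gamma}$, whence $\theta = 1$. Thus $\theta \in [0,1]$, all hypotheses of Theorem \ref{GNineq} are met, and the claimed inequality $\norm{f}_q \leq C \norm{\nabla_k f}_2^\theta \norm{f}_2^{1-\theta}$ follows at once.

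There is essentially no obstacle here: the corollary is a pure specialization of Theorem \ref{GNineq}, and the only thing requiring (minor) care is confirming that the scaling identity \eqref{assumptionGN} and the admissible range $0 \leq \theta \leq 1$ are consistent with the value of $\theta$ fixed in \eqref{GNtheta}. Should one prefer a self-contained argument, one could instead re-run the proof of Theorem \ref{GNineq} in this case — Hölder's inequality applied to the splitting $|f|^q = |f|^{q\theta}\cdot|f|^{q(1-\theta)}$, followed by the Sobolev inequality of Theorem \ref{sobolev} — but invoking Theorem \ref{GNineq} directly is cleaner.
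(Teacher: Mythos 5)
Your proof is correct and is exactly the route the paper intends: the corollary is stated there as the particular case $p=r=2$ of Theorem \ref{GNineq}, and your verification that \eqref{assumptionGN} reduces to \eqref{GNtheta}, that $\theta$ runs over $[0,1]$ as $q$ runs over $\left[2,\tfrac{2(N+2\gamma)}{N+2\gamma-2}\right]$, and that $\DSS\cap L^2(\mu_k)=\DSS$ is all that is needed. Nothing is missing.
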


\begin{cor} \label{GNcorollary2}
Suppose $N+2\gamma >2$. Then for any $2\leq q \leq \frac{2(N+2\gamma)}{N+2\gamma-2}$ there exists a constant $C>0$ such that the inequality
$$ \norm{f}_q \leq C \norm{f}_{\DSS}$$
holds for any $f\in \DSS$.
\end{cor}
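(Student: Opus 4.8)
The plan is to deduce this immediately from Corollary \ref{GNspecialcase}, so the only real work is to bound the two factors on the right-hand side of the Gagliardo--Nirenberg inequality by the single Sobolev-type norm $\norm{f}_{\DSS}$. Recall that by definition
$$ \norm{f}_{\DSS}^2 = \IntN |\DT(f)(\xi)|^2 (1+|\xi|^2) \diff\mu_k(\xi).$$
First I would observe that $\norm{f}_2 \leq \norm{f}_{\DSS}$: indeed, by the Plancherel formula for the Dunkl transform, $\norm{f}_2^2 = \norm{\DT(f)}_2^2 = \IntN |\DT(f)(\xi)|^2 \diff\mu_k(\xi)$, and since $1 \leq 1+|\xi|^2$ pointwise, this is at most $\norm{f}_{\DSS}^2$. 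Next I would observe that $\norm{\nabla_k f}_2 \leq \norm{f}_{\DSS}$ as well: using the relation $\DT(T_i f)(\xi) = i\xi_i \DT(f)(\xi)$ (the Dunkl analogue of the fact that the Fourier transform turns derivatives into multiplication), Plancherel gives $\norm{\nabla_k f}_2^2 = \sum_{i=1}^N \IntN |\xi_i|^2 |\DT(f)(\xi)|^2 \diff\mu_k(\xi) = \IntN |\xi|^2 |\DT(f)(\xi)|^2 \diff\mu_k(\xi) \leq \norm{f}_{\DSS}^2$.

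With these two bounds in hand, the corollary follows directly: for $f \in \DSS$ and $2 \leq q \leq \frac{2(N+2\gamma)}{N+2\gamma-2}$, set $\theta = (N+2\gamma)\frac{q-2}{2q} \in [0,1]$ as in \eqref{GNtheta}, and apply Corollary \ref{GNspecialcase} to get
$$ \norm{f}_q \leq C \norm{\nabla_k f}_2^\theta \norm{f}_2^{1-\theta} \leq C \norm{f}_{\DSS}^\theta \norm{f}_{\DSS}^{1-\theta} = C \norm{f}_{\DSS}.$$

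The main (minor) obstacle is simply making sure the Plancherel identity and the transform-of-Dunkl-operator rule $\DT(T_i f) = i\xi_i \DT(f)$ are applicable on the class $\DSS$ — i.e., that the formal manipulations are legitimate for every $f$ in this Sobolev space, not just for Schwartz functions. This is handled by the fact, recalled in Section \ref{SEC:prelim}, that the Dunkl transform extends to an isometric isomorphism of $L^2(\mu_k)$, together with a density argument: one proves the chain of inequalities for $f \in \mathcal{S}(\RR^N)$ (or $C_c^\infty$), where all the identities are classical, and then passes to the limit in $\DSS$, noting that both sides are continuous with respect to the $\DSS$-norm. No delicate estimates are needed beyond this.
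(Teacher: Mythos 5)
Your proposal is correct and follows essentially the same route as the paper: both deduce the result from Corollary \ref{GNspecialcase} together with the fact that $\norm{f}_2$ and $\norm{\nabla_k f}_2$ are each controlled by $\norm{f}_{\DSS}$ (via Plancherel and $\DT(T_i f)(\xi)=i\xi_i\DT(f)(\xi)$). The only cosmetic difference is that the paper passes through Young's inequality $a^\theta b^{1-\theta}\le \theta a+(1-\theta)b$ to get $\norm{f}_q^2\lesssim \norm{\nabla_k f}_2^2+\norm{f}_2^2$, whereas you bound each factor directly by $\norm{f}_{\DSS}$ before interpolating; both are equally valid.
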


\begin{proof}
We use the Young inequality
$$ a^\theta b^{1-\theta} \leq \theta a + (1-\theta)b,$$
which holds for all $a,b\geq 0$ and $0\leq \theta \leq 1$. From the previous corollary, with $\theta$ as in (\ref{GNtheta}), we have
\begin{align*}
\norm{f}_q^2
\leq C \max\{\theta,1-\theta\} \cdot (\norm{\nabla_k f}_2^2 + \norm{f}_2^2)
\leq C \norm{f}_{\DSS}^2,
\end{align*}
and the result follows immediately.
\end{proof}

\subsection{The Gagliardo-Nirenberg inequality II} \label{subsection-GN2}

In this section we prove the case $p=q=r$ of the Gagliardo-Nirenberg inequality. Note that in this case the assumptions in (\ref{classicalGN}) imply that $\theta=\frac{j}{l}$. However, we adopt a slightly different notation in this section that will be more convenient for the present case. The main result is the following.

\begin{thm} [Gagliardo-Nirenberg inequality] \label{GN2}
Let $1<p<\infty$, $0 \leq s \leq \frac{N+2\gamma}{p}$ and $0\leq \theta\leq 1$. Then, for any $f \in \mathcal{S}(\RR^N)$, we have the inequality
\begin{equation} \label{GN1_ineq1}
 \norm{(-\Delta_k)^{s(1-\theta)/2}f}_p
	\leq C \norm{(-\Delta_k)^{s/2}f}_p^{1-\theta} \norm{f}_p^\theta,
\end{equation}
where $C>0$ is a constant.
\end{thm}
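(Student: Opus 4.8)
The plan is to deduce the Gagliardo–Nirenberg inequality \eqref{GN1_ineq1} from the boundedness properties of the Riesz potential operators $(-\Delta_k)^{-s/2}$ together with Littlewood–Paley theory for the Dunkl Laplacian. Write $g = (-\Delta_k)^{s/2} f$, so that the claim becomes
$$\norm{(-\Delta_k)^{-s\theta/2} g}_p \leq C \norm{g}_p^{1-\theta}\norm{(-\Delta_k)^{-s/2}g}_p^{\theta}.$$
This is now a pure interpolation statement: the left-hand side is the $L^p$-norm of a fractional integral of $g$ of order $s\theta \in [0,s]$, and we want to control it by a geometric mean of the order-$0$ quantity $\norm{g}_p$ and the order-$s$ quantity $\norm{(-\Delta_k)^{-s/2}g}_p$. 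The exponent constraint $0\le s\le (N+2\gamma)/p$ is exactly what makes all the fractional integrals of orders between $0$ and $s$ act boundedly in the relevant Dunkl Lebesgue scale (the Stein–Weiss/Hardy–Littlewood–Sobolev theory for the Dunkl Riesz potential referenced in the excerpt after Theorem \ref{Lp_Hardy_thm}).

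The key steps, in order. First, I would set up a Littlewood–Paley decomposition adapted to the Dunkl transform: choose a dyadic partition of unity $1 = \psi_0(\xi) + \sum_{j\ge 1}\psi(2^{-j}\xi)$ and define the Dunkl multiplier operators $\Delta_j f = \mathcal{D}_k^{-1}(\psi(2^{-j}\cdot)\mathcal{D}_k f)$; one needs the Dunkl analogue of the Mikhlin–Hörmander multiplier theorem (available in the Dunkl literature) so that the square-function norm $\norm{(\sum_j |\Delta_j f|^2)^{1/2}}_p$ is comparable to $\norm{f}_p$ for $1<p<\infty$. Second, on each dyadic block the operator $(-\Delta_k)^{-s\theta/2}$ acts essentially as multiplication by $2^{-js\theta}$, and I would interpolate the elementary pointwise bound $2^{-js\theta} = (2^{-js})^{\theta}\cdot 1^{1-\theta}$ frequency-block by frequency-block. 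Third, summing via the square function and applying Hölder's inequality in $j$ (with exponents $1/\theta$ and $1/(1-\theta)$) converts the blockwise estimate into
$$\norm{(-\Delta_k)^{-s\theta/2}g}_p \lesssim \Big\|\Big(\sum_j |\Delta_j g|^2\Big)^{1/2}\Big\|_p^{1-\theta}\,\Big\|\Big(\sum_j 2^{-2js}|\Delta_j g|^2\Big)^{1/2}\Big\|_p^{\theta},$$
and the two square functions are comparable to $\norm{g}_p$ and $\norm{(-\Delta_k)^{-s/2}g}_p$ respectively. Rewriting $g=(-\Delta_k)^{s/2}f$ gives \eqref{GN1_ineq1}. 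One must also treat the endpoints $\theta=0$ (trivial) and $\theta=1$ (the inequality $\norm{f}_p\lesssim\norm{f}_p$, again trivial, after noting $(-\Delta_k)^{0}=\mathrm{Id}$), and the boundary case $s=0$ separately, where the statement is again trivial; the Schwartz-class hypothesis on $f$ guarantees all the fractional powers are well defined and the decomposition converges.

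The main obstacle I expect is not the interpolation scheme itself — which is standard once the machinery is in place — but verifying that the required harmonic-analytic tools genuinely hold in the Dunkl setting with the homogeneous dimension $N+2\gamma$ playing the role of $N$: specifically, the $L^p$-boundedness of the Dunkl–Littlewood–Paley square function and the Dunkl Mikhlin multiplier theorem for $1<p<\infty$. These are available in the work of Amri–Sifi, Dai–Wang, and others, but one should be careful that the multipliers $\psi(2^{-j}\cdot)$ and $|\xi|^{-s\theta}\psi(2^{-j}\xi)$ satisfy the needed derivative bounds uniformly in $j$; this uniformity, together with the homogeneity of $w_k$, is what lets the constants be absorbed into a single $C$. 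An alternative, perhaps cleaner route avoiding Littlewood–Paley entirely, would be to invoke a three-line complex-interpolation argument on the analytic family of operators $z\mapsto (-\Delta_k)^{-sz/2}$ acting on $L^p(\mu_k)$, using the $L^p$-boundedness of the imaginary powers $(-\Delta_k)^{it}$ (with operator norm growing at most polynomially in $t$), which is known for Dunkl operators; then \eqref{GN1_ineq1} is just the convexity of $z\mapsto \log\norm{(-\Delta_k)^{-sz/2}g}_p$ on the strip $0\le \mathrm{Re}\,z\le 1$. I would present whichever of these the earlier sections of the paper have already prepared the ground for.
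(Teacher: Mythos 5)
Your proposal is correct and follows essentially the same route as the paper: a Dunkl--Littlewood--Paley decomposition whose square function is controlled via the H\"ormander--Mikhlin multiplier theorem of Dai--Wang (plus Khinchin's inequality and a duality argument for the reverse bound), followed by the pointwise/H\"older interpolation $2^{js(1-\theta)} = (2^{js})^{1-\theta}\cdot 1^{\theta}$ across dyadic blocks. The opening appeal to Riesz potentials and Stein--Weiss bounds is unnecessary for this argument, but the core scheme matches the paper's proof of Theorem \ref{GN2} via Proposition \ref{littewoodpaleyfractional}.
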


In order to prove this result, we need to introduce some Littlewood-Paley theory. Let $\psi \in C_c^\infty(\RR)$ with support contained in the annulus $\frac{1}{2} \leq |x| \leq 2$ and such that
\begin{equation} \label{lpsum1}
\sum_{j\in\ZZ} \psi(2^{-j}x) = 1 \qquad \text{ for all } x\in\RR\setminus \{0\}.
\end{equation}
For convenience denote $\psi_j (x) = \psi(2^{-j} x)$ and note that $\psi_j$ has support contained in the annulus $2^{j-1} \leq |x| \leq 2^{j+1}$. Define the following operators on $L^2(\mu_k)$
$$ P_j (f) = \DT^{-1} (\psi_j \DT(f)).$$
Then (\ref{lpsum1}) implies that
\begin{equation} \label{sumpj}
\sum_{j\in\ZZ} P_j = I,
\end{equation}
where $I$ is the identity operator on $L^2(\mu_k)$. The following result will be essential in the proof of Theorem \ref{GN2}.

\begin{prop} \label{littewoodpaleyfractional}
For any $s \geq 0$ and $1<p<\infty$ there exist two constants $C_1,C_2>0$ such that
$$ C_1 \norm{(-\Delta_k)^{s/2} f}_p \leq \norm{\left(\sum_{j\in\ZZ} |2^{js} P_j f|^2 \right)^{1/2}}_p
\leq C_2 \norm{(-\Delta_k)^{s/2} f}_p$$
holds for all $f\in \mathcal{S}(\RR^N)$.
\end{prop}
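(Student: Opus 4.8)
The plan is to transfer the classical Littlewood--Paley characterization of homogeneous Sobolev norms to the Dunkl setting by exploiting the fact that on the Fourier/Dunkl side all the relevant multipliers are Mikhlin--H\"ormander multipliers, and that $L^p(\mu_k)$ is a space of homogeneous type for which a Dunkl analogue of the Mikhlin multiplier theorem and the Littlewood--Paley inequalities are available in the literature (e.g. work of Dai--Wang, Thangavelu--Xu, and the vector-valued Calder\'on--Zygmund theory adapted to $\mu_k$). First I would recall that $\mu_k$ is a doubling measure on $\RR^N$ and that the Dunkl transform $\DT$ diagonalizes $-\Delta_k$ in the sense $\DT((-\Delta_k)^{s/2} f)(\xi) = |\xi|^s \DT(f)(\xi)$; hence the operators $P_j$, $(-\Delta_k)^{s/2}$, and their compositions are all Dunkl-multiplier operators with symbols that are smooth away from the origin and satisfy the appropriate homogeneity/decay estimates.

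The argument then splits into the two inequalities. For the upper bound, I would write $(-\Delta_k)^{s/2} P_j f = 2^{js} P_j \widetilde P_j f$ where $\widetilde P_j$ is a fattened Littlewood--Paley piece (a multiplier $\widetilde\psi_j$ equal to $1$ on the support of $\psi_j$) absorbing the factor $|\xi|^s 2^{-js}$, which on $\operatorname{supp}\psi_j$ is a smooth symbol uniformly (in $j$) bounded in the Mikhlin sense. Then, using the Dunkl Littlewood--Paley square function estimate
$$
\norm{\Bigl(\sum_{j\in\ZZ} |2^{js} P_j g|^2\Bigr)^{1/2}}_p \lesssim \norm{(-\Delta_k)^{s/2} g}_p,
$$
applied suitably, together with a Fefferman--Stein vector-valued maximal inequality to handle the family $\{\widetilde P_j\}$ uniformly, gives
$$
\norm{\Bigl(\sum_{j\in\ZZ} |2^{js} P_j f|^2\Bigr)^{1/2}}_p \lesssim \norm{(-\Delta_k)^{s/2} f}_p.
$$
For the lower bound, I would use a duality/polarization argument: pair $(-\Delta_k)^{s/2} f$ against a test function $g$, insert the resolution of identity $\sum_j P_j = I$ (valid in $L^2(\mu_k)$ and extendable), move half of the multiplier $|\xi|^s$ onto $g$ via the fattened pieces as above, apply the Cauchy--Schwarz inequality in $j$ pointwise, and then H\"older in $L^p$--$L^{p'}$ to reach $\norm{(\sum_j |2^{js}P_j f|^2)^{1/2}}_p \cdot \norm{(\sum_j |2^{-js}P_j g|^2)^{1/2}}_{p'}$, the second factor being controlled by $\norm{g}_{p'}$ again by the square-function estimate with exponent $-s$; taking the supremum over $\norm{g}_{p'}\le 1$ yields $C_1 \norm{(-\Delta_k)^{s/2}f}_p \le \norm{(\sum_j|2^{js}P_jf|^2)^{1/2}}_p$.

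The main obstacle is establishing the Dunkl Littlewood--Paley square-function inequality and the vector-valued (Fefferman--Stein) maximal inequality in the weighted space $L^p(\mu_k)$, i.e. confirming that the singular-integral machinery genuinely goes through for the Dunkl transform. This rests on the boundedness of the Dunkl--Riesz transforms and of Dunkl multiplier operators of Mikhlin type on $L^p(\mu_k)$, $1<p<\infty$ (and their Hilbert-space-valued versions), which follows from the generalized translation structure and Calder\'on--Zygmund theory on the space of homogeneous type $(\RR^N,\|\cdot\|,\mu_k)$; I would cite the relevant results rather than reprove them, and then the bulk of the proof is the symbol bookkeeping described above. Everything else --- the fattening of Littlewood--Paley pieces, the uniform-in-$j$ Mikhlin bounds, and the duality argument --- is routine once those two analytic inputs are in hand.
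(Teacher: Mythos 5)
Your strategy is essentially the paper's: for the upper bound you absorb the dilation-corrected symbol $2^{-js}|\xi|^{s}$ into the Littlewood--Paley bump, and for the lower bound you dualize, exploit the almost-orthogonality of the $P_j$, and apply Cauchy--Schwarz in $j$ followed by H\"older. Two points deserve attention. First, as written your upper-bound step is circular: you invoke the weighted square-function estimate that is being proved. What is actually needed (and what the paper establishes as a separate lemma) is the unweighted bound $\|(\sum_{j}|P_j^{\varphi}f|^2)^{1/2}\|_p\lesssim\|f\|_p$ for an \emph{arbitrary} bump $\varphi\in C_c^\infty$ supported in the annulus $\frac12\le|x|\le 2$; this is derived from the Dai--Wang H\"ormander--Mikhlin multiplier theorem for Dunkl operators via randomization and Khinchin's inequality, and then applied with $\varphi(\xi)=|\xi|^{-s}\psi(\xi)$ (upper bound) and $\varphi(\xi)=|\xi|^{s}\psi(\xi)$ (lower bound, on the dual side). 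Second, once the symbol is absorbed into the bump in this way, the fattened pieces $\widetilde P_j$ and the Fefferman--Stein vector-valued maximal inequality are unnecessary; this is worth avoiding, because the vector-valued maximal inequality for the Dunkl maximal function (which is built from generalised translations, not the plain Hardy--Littlewood maximal function of the doubling measure $\mu_k$) is not an off-the-shelf input. With these adjustments your outline coincides with the paper's argument.
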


Before we prove the Proposition, we need the following two results. The first is the H\"ormander-Mikhlin multiplier theorem which was proved in the case of Dunkl operators in \cite{DW} (see also \cite[Chapter 7]{DX}). The second result is a Lemma which corresponds to the particular case $s=0$ in Proposition \ref{littewoodpaleyfractional}, but where the conditions on the function $\psi$ are relaxed.

\begin{thm}[H\"ormander-Mikhlin multiplier theorem - Theorem 4.1, \cite{DW}]

Let $m:(0,\infty) \to \RR$ be a function satisfying $\norm{m}_\infty \leq A$ and the H\"ormander's condition
$$ \frac{1}{R} \int_R^{2R} |m^{(r)}(x)| \diff x \leq A R^{-r} \qquad \forall R>0,$$
where $r$ is the smallest integer greater than or equal to $\frac{N+1}{2} + \gamma$. Let $\mathcal{T}_m$ be the operator on $L^2(\mu_k)$ defined by
$$ \DT(\mathcal{T}_mf)(\xi) = m(|\xi|) \DT(f)(\xi) \qquad \forall \xi\in\RR^N.$$
Then for all $1<p<\infty$ there exists a constant $C_p>0$ such that
$$ \norm{\mathcal{T}_mf}_p \leq C_pA \norm{f}_p$$
for all $f\in \mathcal{S}(\RR^N)$.
\end{thm}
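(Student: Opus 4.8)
The plan is to obtain the $L^p$ bounds from Calder\'on--Zygmund theory on the space of homogeneous type $(\RR^N,|\cdot|,\mu_k)$, the measure $\mu_k$ being doubling of homogeneous dimension $N+2\gamma$. The $L^2$ estimate is immediate from the Plancherel formula for the Dunkl transform: $\norm{\mathcal{T}_m f}_2=\norm{m(|\cdot|)\DT(f)}_2\le\norm{m}_\infty\norm{\DT(f)}_2=\norm{m}_\infty\norm{f}_2\le A\norm{f}_2$. Next I would realise $\mathcal{T}_m$ as a Dunkl convolution operator, $\mathcal{T}_m f=f\ast_k\Phi$ with $\Phi=c_k\,\DT^{-1}(m(|\cdot|))$ taken as a tempered distribution, so that its integral kernel is $K(x,y)=\tau_x\Phi(-y)$, where $\tau_x$ denotes the Dunkl translation. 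The whole matter then reduces to the H\"ormander regularity estimate
$$\sup_{y\in\RR^N}\ \int_{|x|\ge 2|y|}\bigl|K(x,y)-K(x,0)\bigr|\,\diff\mu_k(x)\lesssim A,$$
after which the standard machinery applies.

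To prove this, fix $\eta\in C_c^\infty((1/2,2))$ with $\sum_{j\in\ZZ}\eta(2^{-j}t)=1$ for $t>0$, put $m_j(t)=m(t)\,\eta(2^{-j}t)$ so that $m=\sum_j m_j$, and set $\Phi_j=c_k\,\DT^{-1}(m_j(|\cdot|))$. Since $m_j$ is supported where $|\xi|\sim 2^j$ and, by the H\"ormander condition, has its first $r$ derivatives bounded there by $A\,2^{-j\ell}$ for $0\le\ell\le r$, integrating by parts $r$ times against the kernel bound $|\partial_\xi^\beta E_k(x,\xi)|\le|x|^{|\beta|}\max_{g\in G}e^{\mathrm{Re}\langle gx,\xi\rangle}$ yields the expected scaling for $\Phi_j$ and its Dunkl derivatives, with $\nabla_k^\ell\Phi_j$ concentrated at scale $|x|\sim 2^{-j}$ and of $L^2(\mu_k)$-mass of order $A\,2^{j(\ell+(N+2\gamma)/2)}$ up to a $(1+2^j|x|)^{-r}$ tail. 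Because $\tau_x$ commutes with $\nabla_k$ and is bounded on $L^2(\mu_k)$, and because the Dunkl translate of a function supported in a ball of radius $\rho$ is supported in $\{z:\bigl||z|-|x|\bigr|\lesssim\rho\}$, I would estimate $K(x,y)-K(x,0)=\tau_x\Phi(-y)-\tau_x\Phi(0)$ annulus by annulus: on $|x|\sim 2^\ell$ write the difference as $\int_0^1\tfrac{d}{dt}\,\tau_x\Phi(-ty)\,\diff t$, bound it by $|y|\sup_t|\tau_x(\nabla_k\Phi)(-ty)|$, and apply Cauchy--Schwarz in $x$ against the volume $\mu_k(\{|x|\sim 2^\ell\})\sim 2^{\ell(N+2\gamma)}$ together with the $L^2$ estimate for $\tau_x(\nabla_k\Phi_j)$. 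Summing over $j$ and $\ell$ converges exactly because $r>\tfrac{N+2\gamma}{2}$, which is precisely why $r$ is taken to be the least integer $\ge\tfrac{N+1}{2}+\gamma$.

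Once the H\"ormander condition is in hand, Calder\'on--Zygmund theory on $(\RR^N,|\cdot|,\mu_k)$ gives the weak-type bound $\norm{\mathcal{T}_m f}_{L^{1,\infty}(\mu_k)}\lesssim A\norm{f}_1$; the Marcinkiewicz interpolation theorem then yields boundedness on $L^p(\mu_k)$ for $1<p\le 2$, and the range $2\le p<\infty$ follows by duality since $\mathcal{T}_m^\ast=\mathcal{T}_{\overline m}$ and $\overline m$ obeys the same hypotheses. The step I expect to be the main obstacle is the kernel regularity estimate of the second paragraph: unlike in the Euclidean case the Dunkl translation $\tau_x$ is not given by an explicit positive kernel and is not bounded on $L^p(\mu_k)$ for $p\ne 2$, so every bound on $K(x,y)-K(x,0)$ must be funnelled through the $L^2$-boundedness of $\tau_x$, its commutation with $\nabla_k$, and the support localisation of Dunkl translations --- and it is this $L^2$-based route, costing a square root of the annular volume, that forces the number of derivatives $r$ to exceed half the homogeneous dimension $\tfrac{N+2\gamma}{2}$.
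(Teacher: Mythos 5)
First, a point of comparison: the paper does not prove this theorem at all --- it is quoted as Theorem 4.1 of \cite{DW}, where it is obtained by transference from a multiplier theorem for $h$-harmonic expansions on the sphere, a route quite different from yours. Your plan --- Calder\'on--Zygmund theory on the space of homogeneous type $(\RR^N,|\cdot|,\mu_k)$, with the $L^2$ bound from Plancherel, a dyadic decomposition of $m$, and duality to pass from $1<p\le 2$ to $p\ge 2$ --- is the strategy of later direct proofs in the literature, so it is not a wrong approach in principle, and those outer steps of your argument are fine.

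The genuine gap is in the central kernel regularity estimate, in three places. (i) The condition you state controls $K(x,y)-K(x,0)$, i.e.\ smoothness of the kernel about the single centre $y'=0$. In the Euclidean case this suffices because one may translate an arbitrary Calder\'on--Zygmund ball to the origin; here $\mu_k$ is not translation invariant and $K(x,y)=\tau_x\Phi(-y)$ is not a function of $x-y$, so the weak-$(1,1)$ argument needs $\int_{d(x,y')>2r}|K(x,y)-K(x,y')|\diff\mu_k(x)\lesssim A$ uniformly over arbitrary centres $y'$ and all $y$ with $|y-y'|<r$, where moreover the correct quasi-distance is the orbit distance $d(x,y)=\min_{g\in G}|x-gy|$ (the singular support of $K$ is $\bigcup_{g\in G}\{x=gy\}$, so the excised set must be the $G$-orbit of the doubled ball); none of this follows from the $y'=0$ case by any symmetry. (ii) The mean-value step $\frac{d}{dt}\,\tau_x\Phi(-ty)=-y\cdot\nabla(\tau_x\Phi)(-ty)$ involves the ordinary gradient, which does not commute with $\tau_x$; only the Dunkl derivatives $T_j$ do, and $\partial_j$ differs from $T_j$ by nonlocal difference quotients that are singular on the reflection hyperplanes, so you cannot replace $\nabla(\tau_x\Phi)$ by $\tau_x(\nabla_k\Phi)$ as your estimate requires. (iii) The support localisation of Dunkl translates that you invoke is only known for radial functions; each $\Phi_j$ is radial, but $\nabla_k\Phi_j$ is not, so the annulus-by-annulus bookkeeping for the gradient terms is unjustified. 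The published direct proofs avoid (ii)--(iii) by expressing the dyadic pieces through the Dunkl heat semigroup and funnelling all pointwise kernel bounds through Gaussian estimates for the Dunkl heat kernel, rather than applying $\tau_x$ to $\nabla_k\Phi_j$ directly; some such substitute idea is needed to close your argument.
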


\begin{lem} \label{littlewoodpaleylemma}
Let $\varphi\in C_c^\infty(\RR)$ with support contained in the annulus $\frac{1}{2} \leq |x| \leq 2$. As in the above, let $\varphi_j(x)=\varphi(2^{-j}x)$ and consider the operator on $L^2(\mu_k)$ given by
$$ P_j ^\varphi f = \DT^{-1} (\varphi_j \DT(f)).$$
Then, for any $1<p<\infty$ there exists a constant $C>0$ such that
\begin{equation} \label{littlewoodpaleyineq1}
\norm{\left( \sum_{j\in\ZZ} |P_j^\varphi f|^2 \right)^{1/2}}_p \leq C \norm{f}_p.
\end{equation}
\end{lem}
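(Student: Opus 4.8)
The plan is to deduce the vector-valued bound \eqref{littlewoodpaleyineq1} from the scalar H\"ormander--Mikhlin multiplier theorem for Dunkl operators by a standard randomization (Rademacher function) argument. First I would introduce the Rademacher functions $r_j$ on $[0,1]$ and, for each fixed $t\in[0,1]$, consider the multiplier
$$ m_t(|\xi|) = \sum_{j\in\ZZ} r_j(t)\, \varphi(2^{-j}|\xi|). $$
The key observation is that $m_t$ satisfies the hypotheses of the H\"ormander--Mikhlin theorem with a bound $A$ independent of $t$: indeed, on any dyadic annulus at most two of the terms $\varphi(2^{-j}|\xi|)$ are nonzero, so $\norm{m_t}_\infty \lesssim \norm{\varphi}_\infty$, and the same finite-overlap property together with the scaling $\tfrac{d^r}{dx^r}\varphi(2^{-j}x) = 2^{-jr}\varphi^{(r)}(2^{-j}x)$ gives H\"ormander's derivative condition uniformly in $t$. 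Hence the operator $\mathcal{T}_{m_t}$, which acts as $\mathcal{T}_{m_t} f = \sum_j r_j(t) P_j^\varphi f$, satisfies $\norm{\mathcal{T}_{m_t} f}_p \le C_p A \norm{f}_p$ with $C_p A$ independent of $t$.

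Next I would raise this to the $p$-th power, integrate in $t$ over $[0,1]$, and apply Fubini:
$$ \int_0^1 \norm{\sum_{j\in\ZZ} r_j(t) P_j^\varphi f}_p^p \diff t = \IntN \int_0^1 \Bigl| \sum_{j\in\ZZ} r_j(t) P_j^\varphi f(x) \Bigr|^p \diff t \diff\mu_k(x) \le (C_p A)^p \norm{f}_p^p. $$
By Khintchine's inequality, for each fixed $x$ the inner integral is comparable to $\bigl( \sum_j |P_j^\varphi f(x)|^2 \bigr)^{p/2}$, with constants depending only on $p$. Substituting this equivalence back gives
$$ \norm{\Bigl( \sum_{j\in\ZZ} |P_j^\varphi f|^2 \Bigr)^{1/2}}_p \lesssim C_p A \norm{f}_p, $$
which is \eqref{littlewoodpaleyineq1}. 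A minor technical point I would address is justifying the interchange of the sum over $j\in\ZZ$ with the integrals and with $\DT^{-1}$: since $f\in\mathcal{S}(\RR^N)$ and $\varphi$ is supported away from the origin, $\DT(f)$ is Schwartz, only finitely many $\varphi_j$ are relevant on any compact set, and the partial sums converge suitably, so the formal manipulations are legitimate.

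The main obstacle is ensuring that the H\"ormander--Mikhlin hypotheses genuinely hold \emph{uniformly in $t$} for the randomized multiplier $m_t$ — in particular checking the averaged derivative condition $\frac{1}{R}\int_R^{2R} |m_t^{(r)}(x)|\diff x \le A R^{-r}$ with $A$ independent of $t$. This reduces to the finite-overlap property of the dilates of $\varphi$ together with the elementary scaling estimate on $|\varphi^{(r)}(2^{-j}x)|$, but it must be done carefully since $r \sim \frac{N+1}{2}+\gamma$ derivatives are involved. Everything else — Khintchine, Fubini, the density reduction to Schwartz functions — is routine. Note also that this lemma is precisely the $s=0$, relaxed-$\psi$ case needed to bootstrap to Proposition \ref{littewoodpaleyfractional}: there one writes $2^{js} P_j f = P_j^{\varphi}\bigl((-\Delta_k)^{s/2} f\bigr)$ up to harmless multiplier factors and invokes this lemma.
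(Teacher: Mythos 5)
Your proposal is correct and takes essentially the same route as the paper: the paper randomizes with i.i.d.\ $\pm 1$ random variables $\epsilon_j$ in place of your Rademacher functions, verifies the H\"ormander--Mikhlin condition for the randomized multiplier by the same finite-overlap and scaling observations on $\varphi^{(r)}(2^{-j}x)$, and concludes via Khinchin's inequality exactly as you describe.
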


\begin{proof}
Let $\{\epsilon_j\}_{j\in\ZZ}$ be a sequence of independent identically distributed random variables with $\mathbb{P}(\epsilon_j = \pm 1) = \frac{1}{2}$. Recall Khinchin's inequality: if $1<p<\infty$, then there exist two constants $C_1,C_2>0$ such that for any $y_1,\ldots, y_n \in \CC$ we have
$$ C_1 \left(\sum_{j=1}^n |y_j|^2 \right)^{1/2}
\leq \left( \mathbb{E}\left| \sum_{j=1}^n \epsilon_j y_j \right|^p \right)^{1/p}
\leq C_2 \left(\sum_{j=1}^n |y_j|^2 \right)^{1/2}.$$

Consider then the operator
$$ \mathcal{T}f = \sum_{j\in\ZZ} \epsilon_j P_j^\varphi f.$$
Then
$$ \DT(\mathcal{T}f) = \sum_{j\in\ZZ} \epsilon_j \varphi_j \DT(f),$$
so the operator $\mathcal{T}$ is given by the multiplier
$$ m(x) = \sum_{j\in\ZZ} \epsilon_j \varphi_j(x).$$
We wish to apply the H\"ormander-Mikhlin multiplier theorem. Let $r$ be the smallest integer greater than or equal to $\frac{N+1}{2}+\gamma$. We have
$$ m^{(r)}(x) = \sum_{j\in\ZZ} 2^{-jr} \epsilon_j \varphi^{(r)}(2^{-j}x).$$
By considering the support of $\varphi$, at most two terms in this sum are non-zero, corresponding to the $j\in\ZZ$ such that
$$ \log_2 x - 1 \leq j \leq \log_2 x + 1.$$
Thus
$$ |m^{(r)}(x)| \leq C |2^{-jr}| \leq C x^{-r},$$
so
$$ \int_R^{2R} |m^{(r)}(x)| \diff x \leq C R^{-r+1},$$
so $m$ does indeed satisfy the requirements of the H\"ormander-Mikhlin theorem. Therefore
\begin{equation} \label{multiplierineq}
\norm{\mathcal{T}f}_p \leq C \norm{f}_p.
\end{equation}

On the other hand, by Khinchin's inequality we have
$$ \mathbb{E}(|\mathcal{T}f(x)|^p) = \mathbb{E} \left( \left| \sum_{j\in\ZZ} \epsilon_j P_j^\varphi f(x) \right|^p \right)
\geq C \left(\sum_{j\in\ZZ} |P_j^\varphi f(x)|^2 \right)^{p/2}. $$
But (\ref{multiplierineq}) implies that
$$ \mathbb{E}(\norm{\mathcal{T}f}_p^p) \leq C \norm{f}_p^p.$$
The last two inequalities finally imply (\ref{littlewoodpaleyineq1}).
\end{proof}

\begin{proof} [Proof of Proposition \ref{littewoodpaleyfractional}]
We will first prove the inequality on the right hand side. Since $\DT((-\Delta_k)^{s/2}f)(\xi) = |\xi|^{s} \DT(f)(\xi)$, we have
\begin{equation} \label{littlewoodpaleydt}
2^{js} P_jf(x) = \DT^{-1} (2^{js} |\xi|^{-s} \psi_j \DT((-\Delta_k)^{s/2} f)).
\end{equation}
Let $\varphi(\xi)=\frac{\psi(\xi)}{|\xi|}$ and let $P_j^\varphi$ be defined as in the previous lemma. Also let $g=(-\Delta_k)^{s/2}f$. Then from (\ref{littlewoodpaleydt}) it follows that
$$ \sum_{j\in\ZZ} |2^{js} P_j f|^2 = \sum_{j\in\ZZ} |P_j^\varphi g|^2,$$
so by Lemma \ref{littlewoodpaleylemma} we have
\begin{equation} \label{littlewoodpaleyright}
\norm{\left( \sum_{j\in\ZZ} |2^{js} P_j f|^2 \right)^{1/2}}_p
= \norm{\left( \sum_{j\in\ZZ} |P_j^\varphi g|^2 \right)^{1/2}}_p
\leq C \norm{g}_p
= C \norm{(-\Delta_k)^{s/2} f}_p.
\end{equation}

We now turn to the left hand side inequality. Let $g\in \mathcal{S}(\RR^N)$ be arbitrary. By considering the support of $\psi$ we can see that
$$ \IntN P_j f \cdot P_i g \diff\mu_k =0$$
for all $|j-i| >2$. Thus, by (\ref{sumpj}), we have
\begin{align*}
\IntN (-\Delta_k)^{s/2} f \overline{g} \diff\mu_k
& = \sum_{i=-2}^2 \sum_{j\in\ZZ} \IntN P_j((-\Delta_k)^{s/2} f) P_{j+i} \overline{g} \diff\mu_k
\\
&= \sum_{i=-2}^2 \sum_{j\in\ZZ} \IntN \psi_j(\xi) \psi_{j+i}(\xi) |\xi|^s \DT (f)(\xi) \DT (\overline{g})(\xi) \diff\mu_k(\xi)
\\
&= \sum_{i=-2}^2 \sum_{j\in\ZZ} \IntN 2^{js} P_j(f) P_{j+i}^\varphi (\overline{g}) \diff\mu_k,
\end{align*}
where $\varphi(\xi)=|\xi|^s \psi(\xi)$. Here we used the Plancherel identity twice. Thus, by the Cauchy-Schwarz inequality we have
\begin{align*}
\left| \IntN (-\Delta_k)^{s/2} f \overline{g} \diff\mu_k  \right|
&\leq \sum_{i=-2}^2 \IntN \left(\sum_{j\in\ZZ} |2^{js}P_jf|^2 \right)^{1/2} \left(\sum_{j\in\ZZ} |P_{j+i}^\varphi \overline{g}|^2 \right)^{1/2} \diff\mu_k
\\
&= 5 \IntN \left(\sum_{j\in\ZZ} |2^{js}P_jf|^2 \right)^{1/2} \left(\sum_{j\in\ZZ} |P_{j}^\varphi \overline{g}|^2 \right)^{1/2} \diff\mu_k.
\end{align*}
By H\"older's inequality and then Lemma \ref{littlewoodpaleylemma}, this implies that
\begin{align*}
\left| \IntN (-\Delta_k)^{s/2} f \overline{g} \diff\mu_k  \right|
&\leq 5 \norm{\left(\sum_{j\in\ZZ} |2^{js}P_jf|^2 \right)^{1/2}}_p \cdot \norm{\left(\sum_{j\in\ZZ} |P_{j}^\varphi \overline{g}|^2 \right)^{1/2}}_{p'}
\\
&\leq C \norm{g}_{p'} \cdot \norm{\left(\sum_{j\in\ZZ} |2^{js}P_jf|^2 \right)^{1/2}}_p.
\end{align*}
Finally, since $g\in \mathcal{S}(\RR^N)$ was arbitrary, it follows that
$$ \norm{(-\Delta_k)^{s/2} f}_p \leq C \norm{\left(\sum_{j\in\ZZ} |2^{js}P_jf|^2 \right)^{1/2}}_p.$$
This completes the proof of Proposition \ref{littewoodpaleyfractional}.
\end{proof}

We are now ready to prove the main result of this section.

\begin{proof} [Proof of Theorem \ref{GN2}] It is easy to see that the cases $\theta=0,1$ are trivial. So, for $0<\theta<1$ by Proposition \ref{littewoodpaleyfractional}, we have
\begin{align*}
\norm{(-\Delta_k)^{s(1-\theta)/s} f}_p
\leq C \norm{ \left(\sum_{j\in\ZZ} |2^{js(1-\theta)} P_jf|^2 \right)^{1/2}}_p.
\end{align*}
But by H\"older's inequality we have
$$ \sum_{j\in\ZZ} |2^{js(1-\theta)} P_jf|^2
\leq \left( \sum_{j\in\ZZ} |2^{js} P_j f|^2 \right)^{1-\theta} \cdot \left( \sum_{j\in\ZZ} |P_j f|^2 \right)^\theta. $$
Applying H\"older's inequality again, the last two results imply
\begin{align*}
\norm{(-\Delta_k)^{s(1-\theta)/s} f}_p
&\leq C \norm{\left( \sum_{j\in\ZZ} |2^{js} P_j f|^2 \right)^{1/2}}_p^{1-\theta} \cdot \norm{\left( \sum_{j\in\ZZ} |P_j f|^2 \right)^{1/2}}_p^\theta
\\
&\leq C \norm{(-\Delta_k)^{s/2} f}_p^{1-\theta} \norm{f}_p^\theta,
\end{align*}
where in the last step we used Proposition \ref{littewoodpaleyfractional}.
\end{proof}

\subsection{Weighted Gagliardo-Nirenberg Inequalities} \label{subsection-GN3}

We will now prove a number of weighted Gagliardo-Nirenberg inequalities.

\begin{thm}\label{GN2_thm} Let $1<p<\frac{N+2\gamma}{2+2\gamma}$, $2\leq s\leq \frac{N+2\gamma}{p}$, and define $q=\frac{p(N+2\gamma)}{N+2\gamma-p}$. Then, for any $f\in C_c^\infty(\RR^N)$
we have
\begin{equation}\label{GN2_ineq1}
\left\|\frac{f}{|x|}\right\|_{q}\lesssim \|(-\Delta_{k})^{s/2}f\|_{p}^{2/s}\|f\|^{1-2/s}_{p}.
\end{equation}
\end{thm}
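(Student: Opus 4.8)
The plan is to combine the weighted Hardy–Sobolev inequality for fractional powers of the Dunkl Laplacian (the Stein–Weiss type estimate from \cite{GIT17}, quoted in the remark after Theorem \ref{Lp_Hardy_thm}) with the unweighted Gagliardo–Nirenberg inequality II (Theorem \ref{GN2}). Concretely, the chain we want is
$$
\left\|\frac{f}{|x|}\right\|_{q} \lesssim \|(-\Delta_k)^{1/2} f\|_{p} \lesssim \|(-\Delta_k)^{s/2}f\|_p^{2/s}\,\|f\|_p^{1-2/s}.
$$
For the first inequality, we would like to apply $\norm{|x|^{-1}g}_q \lesssim \norm{(-\Delta_k)^{1/2}g}_q$ in a form that produces the exponent $q=\frac{p(N+2\gamma)}{N+2\gamma-p}$ on the left and $\norm{\cdot}_p$ on the right; this is exactly the off-diagonal Hardy–Sobolev/Stein–Weiss inequality $\norm{|x|^{-s_0}f}_q \lesssim \norm{(-\Delta_k)^{s_0/2}f}_p$ with $s_0=1$ and the scaling relation $\frac{1}{q} = \frac{1}{p} - \frac{1-s_0}{N+2\gamma}$, which for $s_0=1$ gives precisely $\frac1q = \frac1p - \frac{0}{N+2\gamma}$... so one must be slightly careful: the correct homogeneity for $\norm{|x|^{-1}f}_q \lesssim \norm{(-\Delta_k)^{1/2}f}_p$ forces $\frac1q=\frac1p-\frac{1-1}{N+2\gamma}=\frac1p$, which is wrong. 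Instead the right way is to use the Sobolev embedding $\norm{(-\Delta_k)^{1/2}f}_p \gtrsim \norm{f}_q$ combined with a Hardy inequality; so I would instead run it as $\norm{|x|^{-1}f}_q \lesssim \norm{\nabla_k f}_q$ (Hardy, Theorem \ref{Lp_Hardy_thm}, valid since $q$ will satisfy $q<\frac{N+2\gamma}{1+2\gamma}$ — this needs checking from $p<\frac{N+2\gamma}{2+2\gamma}$) followed by $\norm{\nabla_k f}_q \approx \norm{(-\Delta_k)^{1/2}f}_q$ and then the Hardy–Sobolev inequality $\norm{(-\Delta_k)^{1/2}f}_q \lesssim \norm{(-\Delta_k)^{1/2}f}_p$? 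That last step is false. The clean route is: first the Stein–Weiss inequality $\norm{|x|^{-1}f}_q \lesssim \norm{(-\Delta_k)^{1/2}f}_p$ is itself an \emph{off-diagonal} statement that is valid precisely when $\frac1q=\frac1p-\frac{1}{N+2\gamma}$ and $q<\infty$, which is our definition of $q$. So the first inequality is a single application of the Dunkl Stein–Weiss inequality of \cite{GIT17} with $s_0=1$ and the exponent pair $(p,q)$.

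So the first step is: invoke the Stein–Weiss inequality for the Dunkl Riesz potential from \cite{GIT17} (or equivalently the Hardy–Sobolev inequality stated in the remark following Theorem \ref{Lp_Hardy_thm}, in its $L^p\to L^q$ off-diagonal form with $\frac1q = \frac1p-\frac{1}{N+2\gamma}$ and weight exponent $s_0=1$), to get $\left\|\frac{f}{|x|}\right\|_q \lesssim \|(-\Delta_k)^{1/2} f\|_p$. One needs to record that the hypotheses there are met: $1<p<\frac{N+2\gamma}{2+2\gamma}$ guarantees $1<q<\infty$ and that the weight exponent $1$ lies in the admissible range (here the upper bound $p<\frac{N+2\gamma}{2+2\gamma}$ rather than $p<N+2\gamma$ is exactly what enforces $q$ small enough for the weighted endpoint). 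If $f\in C_c^\infty(\RR^N)$ we may need to first replace $f$ by an approximation in the Lizorkin space $\Phi_k$, or note that the Stein–Weiss inequality for Schwartz functions extends to $C_c^\infty$; I would handle this by density, as in the proof of Theorem \ref{GNineq}.

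The second step is to apply Theorem \ref{GN2} with $\theta$ chosen so that $s(1-\theta)=2$, i.e. $\theta = 1 - 2/s = (s-2)/s$, which lies in $[0,1]$ precisely because $2\le s$. The hypotheses $1<p<\infty$ and $0\le s\le \frac{N+2\gamma}{p}$ are exactly those assumed in Theorem \ref{GN2_thm}. This yields
$$
\|(-\Delta_k)^{1/2}f\|_p = \|(-\Delta_k)^{s(1-\theta)/2}f\|_p \lesssim \|(-\Delta_k)^{s/2}f\|_p^{1-\theta}\|f\|_p^{\theta} = \|(-\Delta_k)^{s/2}f\|_p^{2/s}\|f\|_p^{1-2/s}.
$$
Chaining the two displays gives \eqref{GN2_ineq1}, after again using density of $C_c^\infty(\RR^N)$ in the relevant space to pass from Schwartz functions back to $C_c^\infty$ (or simply observing $C_c^\infty\subset\mathcal S$).

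The main obstacle I anticipate is the bookkeeping in the first step: making sure the \emph{off-diagonal} Dunkl Stein–Weiss inequality with weight exponent exactly $1$ and exponent pair $(p,q)$, $\frac1q=\frac1p-\frac1{N+2\gamma}$, is actually available in the literature in the stated generality and for the class $C_c^\infty(\RR^N)$ rather than only for the Lizorkin space $\Phi_k$ — and verifying that the somewhat restrictive hypothesis $p<\frac{N+2\gamma}{2+2\gamma}$ (as opposed to $p<N+2\gamma$) is precisely what the weighted endpoint $\norm{|x|^{-1}\cdot}_q$ demands. Everything after that is a one-line substitution in Theorem \ref{GN2}.
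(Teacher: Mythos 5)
Your chain breaks at the very first link: the inequality $\left\|\frac{f}{|x|}\right\|_{q}\lesssim \|(-\Delta_k)^{1/2}f\|_{p}$ with $\frac{1}{q}=\frac{1}{p}-\frac{1}{N+2\gamma}$ is false, and you can see this from the homogeneity computation you yourself started and then abandoned. Replacing $f$ by $f(\lambda\,\cdot)$, the left side scales like $\lambda^{2-(N+2\gamma)/p}$ (one power of $\lambda$ from the weight $|x|^{-1}$ and one from passing from $L^p$ to $L^q$ with your choice of $q$), while the right side scales like $\lambda^{1-(N+2\gamma)/p}$; letting $\lambda\to\infty$ kills any candidate constant. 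In Stein--Weiss terms, the weight exponent $1$ together with the pair $(p,q)$, $\frac{1}{q}=\frac{1}{p}-\frac{1}{N+2\gamma}$, forces the Riesz potential order $\alpha=2$, i.e.\ the correct intermediate estimate is $\left\|\frac{f}{|x|}\right\|_{q}\lesssim \|\Delta_k f\|_p$, not $\|(-\Delta_k)^{1/2}f\|_p$. Your second step contains the matching half of the same error: you correctly set $\theta=1-2/s$ so that $s(1-\theta)/2=1$, but then identify $\|(-\Delta_k)^{s(1-\theta)/2}f\|_p$ with $\|(-\Delta_k)^{1/2}f\|_p$ rather than with $\|\Delta_k f\|_p$; Theorem \ref{GN2} with this $\theta$ gives $\|\Delta_k f\|_p\lesssim\|(-\Delta_k)^{s/2}f\|_p^{2/s}\|f\|_p^{1-2/s}$, which is what actually matches the exponents $2/s$ and $1-2/s$ in the statement.

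Once you replace $(-\Delta_k)^{1/2}$ by $\Delta_k$ throughout, the argument closes and is essentially the paper's: the paper obtains $\left\|\frac{f}{|x|}\right\|_{q}\lesssim\|\Delta_k f\|_p$ not from a Stein--Weiss theorem but in two elementary steps, namely the $L^q$ Hardy inequality $\left\|\frac{f}{|x|}\right\|_{q}\lesssim\|\nabla_k f\|_q$ (Theorem \ref{Lp_Hardy_thm}, applicable because $p<\frac{N+2\gamma}{2+2\gamma}$ is exactly equivalent to $q<\frac{N+2\gamma}{1+2\gamma}$ --- this is the role of the hypothesis you were puzzling over) followed by the Sobolev inequality of Theorem \ref{sobolev} applied to $\nabla_k f$, and then concludes with Theorem \ref{GN2} at $\theta=1-2/s$ exactly as you intended. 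So the overall architecture of your proposal is right, but as written the two displayed inequalities you rely on are each off by a factor of two in the order of the Laplacian, and the first one is not merely unavailable in the literature but dimensionally impossible.
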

\begin{proof}[Proof of Theorem \ref{GN2_thm}] From the condition $1<p<\frac{N+2\gamma}{2+2\gamma}$, we obtain $1<q<\frac{N+2\gamma}{1+2\gamma}$, so we can apply Theorem \ref{Lp_Hardy_thm} to obtain
\begin{equation}\label{GN2_ineq2}
\left\|\frac{f}{|x|}\right\|_{q}\lesssim \|\nabla_{k}f\|_{q}.
\end{equation}
Since $1<p<\frac{N+2\gamma}{1+2\gamma} \leq N+2\gamma$ and $q=\frac{p(N+2\gamma)}{N+2\gamma-p}$, we can apply the Sobolev inequality of Theorem \ref{sobolev} to the right hand side of \eqref{GN2_ineq2} to obtain
\begin{equation}\label{GN2_ineq3}
\left\|\frac{f}{|x|}\right\|_{q}\lesssim \|\Delta_{k}f\|_{p}.
\end{equation}
Now, since $2\leq s\leq \frac{N+2\gamma}{p}$, applying \eqref{GN1_ineq1} with $\theta=1-2/s$ to the right hand side of \eqref{GN2_ineq3} we obtain \eqref{GN2_ineq1}, as required.
\end{proof}

\begin{thm}\label{GN3_thm} Assume that $\frac{N+2\gamma}{2}>2$. Let $1\leq a\leq 2$, $2\leq s\leq \frac{N+2\gamma}{2}$, and define $p=\frac{2(N+2\gamma)}{N+2\gamma-2+2(a-1)}$. Then, for any $f\in C_c^\infty(\RR^N)$
we have
\begin{equation}\label{GN3_ineq1}
\left\|\frac{f}{|x|^{a}}\right\|_{p}\lesssim \|(-\Delta_{k})^{s/2}f\|_{2}^{2/s}\|f\|^{1-2/s}_{2}.
\end{equation}
\end{thm}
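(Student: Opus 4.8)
The plan is to obtain \eqref{GN3_ineq1} by chaining together two results already proved in the excerpt: the weighted Rellich inequality of Theorem \ref{weigh_Rellich_thm}, specialized to trivial weight, and the Gagliardo-Nirenberg inequality II of Theorem \ref{GN2}.

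First I would apply Theorem \ref{weigh_Rellich_thm} with its parameter ``$a$'' set equal to $0$ and its parameter ``$b$'' set equal to the $a$ appearing in the present statement. Since $1\leq a\leq 2$, the hypothesis $0+1\leq b\leq 0+2$ holds; the exponent $\frac{2(N+2\gamma)}{N+2\gamma-2+2(b-(0+1))}$ then equals exactly the $p$ defined in Theorem \ref{GN3_thm}; and the required condition $0+1<\frac{N+2\gamma-2}{2}$ is precisely the standing assumption $\frac{N+2\gamma}{2}>2$. This gives, for every $f\in C_c^\infty(\RR^N)$,
\[
\left\|\frac{f}{|x|^{a}}\right\|_{p}\lesssim \|\Delta_k f\|_2 .
\]

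Next I would bound the right-hand side by interpolation. Apply Theorem \ref{GN2} with $p=2$ and $\theta=1-\tfrac{2}{s}$. Then $\tfrac{s(1-\theta)}{2}=1$, so the left-hand side of \eqref{GN1_ineq1} is exactly $\|(-\Delta_k)f\|_2=\|\Delta_k f\|_2$; the constraint $0\leq s\leq\frac{N+2\gamma}{p}=\frac{N+2\gamma}{2}$ is the assumed range of $s$; and $\theta\in[0,1)$ because $s\geq 2$. Since $C_c^\infty(\RR^N)\subset\mathcal{S}(\RR^N)$, Theorem \ref{GN2} applies and yields
\[
\|\Delta_k f\|_2 \lesssim \|(-\Delta_k)^{s/2}f\|_2^{2/s}\,\|f\|_2^{1-2/s}.
\]
Combining the two displayed estimates gives \eqref{GN3_ineq1}.

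I do not expect a genuine obstacle here: the entire content is the observation that putting the weight in Theorem \ref{weigh_Rellich_thm} equal to zero produces an unweighted second-order estimate with exactly the right critical Sobolev-type exponent, after which the subcritical powers of $-\Delta_k$ are absorbed by the interpolation inequality of Theorem \ref{GN2}. The only points needing care are the parameter bookkeeping (verifying that $b=a$ together with Rellich-weight $0$ reproduces the stated $p$) and confirming that the endpoint values $a=1,2$ and $s=2,\frac{N+2\gamma}{2}$ stay within the admissible ranges of the two cited theorems.
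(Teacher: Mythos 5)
Your proposal is correct and follows exactly the paper's own argument: the paper likewise obtains $\left\|\frac{f}{|x|^{a}}\right\|_{p}\lesssim \|\Delta_{k}f\|_{2}$ from the weighted Rellich inequality \eqref{weigh_Rellich_ineq1} (with weight exponent $0$) and then applies \eqref{GN1_ineq1} with $\theta=1-2/s$ to interpolate $\|\Delta_k f\|_2$ between $\|(-\Delta_k)^{s/2}f\|_2$ and $\|f\|_2$. Your parameter bookkeeping, including the check that $\frac{N+2\gamma}{2}>2$ is exactly the hypothesis $1<\frac{N+2\gamma-2}{2}$ needed for the Rellich step, matches the paper.
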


\begin{proof}[Proof of Theorem \ref{GN3_thm}]
Since $1\leq a\leq 2$, $\frac{N+2\gamma-2}{2}>1$ and $p=\frac{2(N+2\gamma)}{N+2\gamma-2+2(a-1)}$, then by the weighted Rellich inequality \eqref{weigh_Rellich_ineq1} we have
\begin{equation}\label{GN3_ineq2}
\left\|\frac{f}{|x|^{a}}\right\|_{p}\lesssim \|\Delta_{k}f\|_{2}.
\end{equation}
On the other hand, we have by \eqref{GN1_ineq1} that
\begin{equation}\label{GN3_ineq3}
\|\Delta_{k}f\|_{2}\lesssim \|(-\Delta_{k})^{s/2}f\|_{2}^{2/s}\|f\|^{1-2/s}_{2}
\end{equation}
since $2\leq s\leq \frac{N+2\gamma}{2}$. Then, these inequalities \eqref{GN3_ineq2} and \eqref{GN3_ineq3} give \eqref{GN3_ineq1}, as required.
\end{proof}

For the next result, we will need the following Hardy inequality for fractional Dunkl Laplacian from \cite{Vel19}.

\begin{thm}\label{frac_Hardy_thm} Let $0\leq s< \frac{N+2\gamma}{2}$. Then for all $f\in \mathcal{S}(\Rn)$ we have
\begin{equation}\label{frac_Hardy_eq1}
C(s)\left\|\frac{f}{|x|^{s}}\right\|_{2}\leq \|(-\Delta_{k})^{s/2}f\|_{2},
\end{equation}
with the sharp constant $C(s)=2^{s}\frac{\Gamma\left(\frac{1}{2}\left(\frac{N}{2}+\gamma+s\right)\right)}{\Gamma\left(\frac{1}{2}\left(\frac{N}{2}+\gamma-s\right)\right)}$.
\end{thm}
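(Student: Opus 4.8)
The plan is to diagonalise both sides of \eqref{frac_Hardy_eq1} simultaneously by passing to the decomposition of $L^2(\mu_k)$ into spherical harmonics of the Dunkl Laplacian, thereby reducing to a one-parameter family of one-dimensional sharp Hardy inequalities for Hankel transforms, each of which can be evaluated exactly by the Mellin transform. (The qualitative inequality, without the sharp constant, alternatively follows from the Stein--Weiss inequality for the Dunkl Riesz potential quoted above from \cite{GIT17}; the content here is the precise value of $C(s)$.)

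If $s=0$ then \eqref{frac_Hardy_eq1} is the Plancherel identity with $C(0)=1$, so assume $0<s<\frac{N+2\gamma}{2}$ and write $\mathbf{N}:=N+2\gamma$. In polar coordinates $x=r\omega$, the homogeneity $w_k(r\omega)=r^{2\gamma}w_k(\omega)$ gives $\diff\mu_k=r^{\mathbf{N}-1}\,\diff r\,w_k(\omega)\,\diff\sigma(\omega)$, and $L^2(S^{N-1},w_k\,\diff\sigma)=\bigoplus_{n\geq 0}\mathcal{H}^k_n$, where $\mathcal{H}^k_n$ is the space of degree-$n$ generalised spherical harmonics associated with $\Delta_k$. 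Each subspace $L^2\big((0,\infty),r^{\mathbf{N}-1}\,\diff r\big)\otimes\mathcal{H}^k_n$ is $\DT$-invariant, and by the Bochner-type identity for the Dunkl transform (see \cite{Rosler, Anker}) $\DT$ acts on it, after extracting the homogeneity factor $r^n$, as the modified Hankel transform $\mathcal{H}_{\nu_n}$ of order $\nu_n=n+\frac{\mathbf{N}}{2}-1$, which in the Macdonald--Mehta normalisation is an involutive isometry of $L^2\big((0,\infty),r^{2\nu_n+1}\,\diff r\big)$. Since multiplication by $|\xi|^s$ and by $|x|^{-s}$ act fibrewise, as multiplication by $\rho^s$ and $r^{-s}$ respectively in the radial variable, \eqref{frac_Hardy_eq1} is equivalent to the family
$$ C_\nu(s)^2\int_0^\infty\frac{|g(r)|^2}{r^{2s}}\,r^{2\nu+1}\,\diff r\ \leq\ \int_0^\infty \rho^{2s}\,|\mathcal{H}_\nu g(\rho)|^2\,\rho^{2\nu+1}\,\diff\rho,\qquad \nu=\nu_n\ \ (n\geq 0), $$
in which $C_\nu(s)$ is the optimal constant yet to be identified; note $\nu_0=\frac{\mathbf{N}}{2}-1$.

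For each fixed $\nu$ I would show that the sharp constant equals $C_\nu(s)=2^s\,\Gamma\!\big(\tfrac{\nu+1+s}{2}\big)\big/\Gamma\!\big(\tfrac{\nu+1-s}{2}\big)$ via the Mellin transform: the substitution $g\mapsto r^{\nu+1}g$ identifies $L^2(r^{2\nu+1}\,\diff r)$ with $L^2(\diff r/r)$, on which the Mellin transform is unitary, $\mathcal{H}_\nu$ becomes a unimodular multiplier, and $g\mapsto r^{\pm s}g$ becomes an imaginary shift of the Mellin variable; hence $g\mapsto r^{-s}\mathcal{H}_\nu(\rho^{-s}g)$ becomes an explicit Mellin multiplier, whose symbol is built from the Weber--Schafheitlin evaluation $\int_0^\infty t^{\mu-1}J_\nu(t)\,\diff t=2^{\mu-1}\Gamma\!\big(\tfrac{\nu+\mu}{2}\big)\big/\Gamma\!\big(\tfrac{\nu-\mu}{2}+1\big)$ and whose supremum, read off from the resulting product of $\Gamma$-factors, equals $C_\nu(s)^{-1}$. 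Finally $\nu\mapsto\Gamma\!\big(\tfrac{\nu+1+s}{2}\big)\big/\Gamma\!\big(\tfrac{\nu+1-s}{2}\big)$ is strictly increasing on $[\nu_0,\infty)$ when $s>0$ (since $\psi(t+a)-\psi(t+b)>0$ for $a>b>0$, $\psi$ the digamma function), so the whole family is governed by the $n=0$ fibre and the overall sharp constant is $C_{\nu_0}(s)=C(s)$; sharpness is confirmed by testing against radial functions $|x|^{-\mathbf{N}/2+s}\chi(|x|)$ with $\chi$ a smooth cutoff to $[\varepsilon,1/\varepsilon]$ and letting $\varepsilon\to0$, i.e.\ approximating the virtual ground state $|x|^{-\mathbf{N}/2+s}$. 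The routine reductions, the fibrewise identification, and the monotonicity in $\nu$ are all standard; the main technical obstacle is the rigorous Mellin-transform computation of the one-dimensional sharp constant --- in particular justifying the use of the only conditionally convergent Weber--Schafheitlin integral on the critical Mellin line (by analytic continuation or a suitable regularisation) and passing from the pointwise symbol bound to the operator inequality --- together with invoking the Bochner-type identity with exactly the normalisation that makes each $\mathcal{H}_{\nu_n}$ an exact isometry.
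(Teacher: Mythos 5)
The paper does not prove Theorem \ref{frac_Hardy_thm} at all: it is quoted verbatim from \cite{Vel19} as a known input, so there is no internal proof to compare against. Your sketch supplies the standard route to this sharp constant --- the Herbst/Yafaev argument transplanted to the Dunkl setting: decompose $L^2(\mu_k)$ into Dunkl spherical harmonics, use the Bochner-type identity to reduce each fibre to a Hankel transform $\mathcal{H}_{\nu_n}$ with $\nu_n=n+\frac{N+2\gamma}{2}-1$, conjugate by the Mellin transform so that $r^{-s}\mathcal{H}_\nu\rho^{-s}$ becomes a multiplier whose symbol is a ratio of Gamma functions, and observe that the resulting constant is monotone in $\nu$ so the $n=0$ fibre governs. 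This is in substance how the sharp constant is obtained in the literature (it is equivalent, via Plancherel, to the sharp Pitt inequality for the Dunkl transform of Gorbachev--Ivanov--Tikhonov, which the paper already cites as \cite{GIT17} in the remark following Theorem \ref{Lp_Hardy_thm}), and the value you would obtain, $2^s\Gamma\bigl(\tfrac{\nu_0+1+s}{2}\bigr)/\Gamma\bigl(\tfrac{\nu_0+1-s}{2}\bigr)$ with $\nu_0+1=\tfrac{N}{2}+\gamma$, matches $C(s)$; your ground-state test functions $|x|^{-(N+2\gamma)/2+s}\chi$ are the correct near-extremisers since $(-\Delta_k)^{s/2}|x|^{-(N+2\gamma)/2+s}=C(s)|x|^{-(N+2\gamma)/2}$.

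Two points deserve more than the passing mention you give them. First, beyond regularising the Weber--Schafheitlin integral on the critical line, you must justify that the supremum of the symbol's modulus over the Mellin line is attained at $\tau=0$, i.e.\ that $\tau\mapsto\bigl|\Gamma(a+i\tau)/\Gamma(b+i\tau)\bigr|$ with $0<a<b$ is maximised at $\tau=0$; this is classical (e.g.\ from the Weierstrass product for $1/\Gamma$) but it is exactly the step that converts the pointwise Gamma-ratio into the operator norm, and without it the monotonicity-in-$\nu$ conclusion does not follow. Second, the hypothesis $s<\frac{N+2\gamma}{2}$ enters precisely as $\nu_0+1-s>0$, which is needed for the Gamma function in the denominator of $C_{\nu_0}(s)$ to be evaluated at a positive argument and for the $n=0$ multiplier to be bounded at all; it is worth making explicit that all higher fibres then satisfy the same bound automatically. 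With these points filled in, the plan is sound and would yield an essentially self-contained proof of the result the paper imports from \cite{Vel19}.
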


\begin{thm}\label{GN4_thm} Let $0\leq a \leq s \leq \frac{N+2\gamma}{2}$. Then, for any $f\in\mathcal{S}(\Rn)$
we have
\begin{equation}\label{GN4_ineq1}
\left\|\frac{f}{|x|^{a}}\right\|_{2}\lesssim \|(-\Delta_{k})^{s/2}f\|_{2}^{a/s}\|f\|^{1-a/s}_{2}.
\end{equation}
\end{thm}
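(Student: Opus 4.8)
The plan is to deduce Theorem \ref{GN4_thm} by interpolating between the fractional Hardy inequality of Theorem \ref{frac_Hardy_thm} and the Gagliardo-Nirenberg inequality of Theorem \ref{GN2} (inequality \eqref{GN1_ineq1}). First I would dispose of the degenerate endpoints: if $a=0$ the claimed inequality is trivial ($\|f\|_2 \lesssim \|f\|_2$), and if $a=s$ it is exactly the fractional Hardy inequality \eqref{frac_Hardy_eq1} (in the case $a=s=\frac{N+2\gamma}{2}$ one should check the strict-inequality hypothesis $s<\frac{N+2\gamma}{2}$ of Theorem \ref{frac_Hardy_thm}, so the genuinely new content lives in $0<a<s<\frac{N+2\gamma}{2}$, which I will assume henceforth).

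The main step is then: apply Theorem \ref{frac_Hardy_thm} with the exponent $a$ in place of $s$ — which is legitimate since $0\leq a<\frac{N+2\gamma}{2}$ — to get
\begin{equation*}
\left\|\frac{f}{|x|^{a}}\right\|_{2}\lesssim \|(-\Delta_{k})^{a/2}f\|_{2}.
\end{equation*}
It now suffices to bound $\|(-\Delta_k)^{a/2}f\|_2$ by $\|(-\Delta_k)^{s/2}f\|_2^{a/s}\|f\|_2^{1-a/s}$. This is precisely inequality \eqref{GN1_ineq1} of Theorem \ref{GN2}, read with $p=2$, the exterior exponent $s$ replaced by the parameter $s$ of the present theorem, and $\theta$ chosen so that $s(1-\theta)=a$, i.e. $\theta=1-a/s\in[0,1]$ (using $0\leq a\leq s$). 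The hypothesis needed there is $0\leq s\leq \frac{N+2\gamma}{p}=\frac{N+2\gamma}{2}$, which is exactly what we are assuming. Chaining the two displays gives \eqref{GN4_ineq1}.

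The proof is therefore essentially a two-line combination once the bookkeeping of parameters is set up; the only thing requiring a little care — and the place I would expect a subtle boundary issue — is the endpoint $s=\frac{N+2\gamma}{2}$, where Theorem \ref{frac_Hardy_thm} as stated requires the strict inequality $a<\frac{N+2\gamma}{2}$, so one either restricts to $a<s$ there or invokes the endpoint case of Hardy separately. Everything else (the choice $\theta=1-a/s$, the applicability of \eqref{GN1_ineq1} with $p=2$) is routine substitution into results already proved.
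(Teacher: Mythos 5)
Your proof is correct and is essentially identical to the one in the paper: both apply the fractional Hardy inequality of Theorem \ref{frac_Hardy_thm} with exponent $a$ and then interpolate via \eqref{GN1_ineq1} with $p=2$ and $\theta=1-a/s$. Your remark about the endpoint $a=s=\frac{N+2\gamma}{2}$, where Theorem \ref{frac_Hardy_thm} as stated requires the strict inequality, is a point the paper itself glosses over, so your version is if anything slightly more careful.
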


\begin{remark} In the case when $a=s$ this result implies Theorem \ref{frac_Hardy_thm}, while $a=s=2$ this theorem gives Rellich inequality \cite[Theorem 7.1]{Vel19}.
\end{remark}

\begin{proof}[Proof of Theorem \ref{GN4_thm}] Since we have $0\leq a\leq \frac{N+2\gamma}{2}$ one gets from Theorem \ref{frac_Hardy_thm} that
\begin{equation}\label{GN4_ineq2}
\left\|\frac{f}{|x|^{a}}\right\|_{2}\lesssim \|(-\Delta_{k})^{a/2}f\|_{2}.
\end{equation}
On the other hand, we have by \eqref{GN1_ineq1} that
\begin{equation}\label{GN4_ineq3}
\|(-\Delta_{k})^{a/2}f\|_{2}\lesssim \|(-\Delta_{k})^{s/2}f\|_{2}^{a/s}\|f\|^{1-a/s}_{2}
\end{equation}
since $a \leq s \leq \frac{N+2\gamma}{2}$. Then, inequalities \eqref{GN4_ineq2} and \eqref{GN4_ineq3} give \eqref{GN4_ineq1}, as required.
\end{proof}

\section{Trudinger's inequality} \label{SEC:Trudinger}

In this section we prove the following improved version of the Trudinger inequality for Dunkl operators. The approach follows that of \cite{Ozawa95}.

\begin{thm}\label{trudinger}
Let $1<p<\infty$, and $\frac{1}{p}+\frac{1}{p'}=1$. Then, there exist constants $a,C>0$ such that for any $f\in\mathcal{S}(\RR^N)$ that satisfies the assumption
\begin{equation} \label{trudinger_assumption}
\norm{(-\Delta_k)^{(N+2\gamma)/(2p)} f}_p \leq 1,
\end{equation}
we have the inequality
\begin{equation} \label{trudinger_ineq}
\int_{\RR^N} \left( \exp(a |f(x)|^{p'}) - \sum_{\substack{ 0\leq j <p-1 \\ j\in\NN}} \frac{1}{j!} (a |f(x)|^{p'})^j \right) \diff\mu_k(x) 
\leq C \norm{f}_p^p.
\end{equation}
\end{thm}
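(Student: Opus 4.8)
The plan is to follow Ozawa's scheme \cite{Ozawa95}: reduce the statement to a single ``gain of integrability'' estimate in which the dependence of the constant on the integrability exponent is tracked precisely, and then sum a Taylor series. Write $D:=(-\Delta_k)^{(N+2\gamma)/(2p)}$ and $\beta:=(N+2\gamma)/p$. The key lemma I would prove is that there is a constant $C_0>0$, depending only on $p,N,\gamma$, such that
\begin{equation}\label{trud_key}
\norm{f}_q\leq C_0\,q^{1/p'}\,\norm{Df}_p^{1-p/q}\,\norm{f}_p^{p/q}\qquad\text{for all }q\in[p,\infty),\ f\in\mathcal{S}(\RR^N).
\end{equation}

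Granting \eqref{trud_key}, the theorem follows by now-standard bookkeeping. On $\{|f|\le1\}$ the lowest surviving term of the Taylor expansion of the integrand is a multiple of $|f(x)|^{p'j_0}$ with $j_0=\lceil p-1\rceil\ge p-1$, hence $\lesssim|f(x)|^p$ there, so this contribution is $\lesssim\norm{f}_p^p$. On $\{|f|>1\}$ one bounds the integrand by $\exp(a|f|^{p'})$, expands it, and splits the resulting series: each term with $p'j<p$ is $\le\int_{\{|f|>1\}}|f|^p\,d\mu_k\le\norm{f}_p^p$, while for $p'j\ge p$ one applies \eqref{trud_key} with $q=p'j$ and uses the hypothesis \eqref{trudinger_assumption} (so $\norm{Df}_p^{p'j-p}\le1$) to get $\norm{f}_{p'j}^{p'j}\le(C_0(p'j)^{1/p'})^{p'j}\norm{f}_p^p=C_0^{p'j}(p'j)^j\norm{f}_p^p$. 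Since $j!\ge(j/e)^j$, we have $(p'j)^j/j!\le(ep')^j$, so the sum is dominated by $\norm{f}_p^p\sum_j(aC_0^{p'}ep')^j$, which converges as soon as $a<(C_0^{p'}ep')^{-1}$; this is exactly what fixes the admissible constants $a$ and $C$.

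For \eqref{trud_key} I would argue in two steps from results already in the paper; I focus on $q$ large, the range $p\le q\le 2p$ following by interpolation from $q=2p$. Fix $q\ge 2p$, put $\sigma_q:=(N+2\gamma)(1/p-1/q)\in[(N+2\gamma)/(2p),\beta)$ and $g_q:=(-\Delta_k)^{\sigma_q/2}f$, so that $f=(-\Delta_k)^{-\sigma_q/2}g_q$. \emph{Step one} is a fractional Sobolev inequality for the Dunkl--Riesz potential with explicitly tracked constant: realising $(-\Delta_k)^{-\sigma_q/2}$ as a Dunkl convolution against the radial kernel $c_{\sigma_q}|x|^{\sigma_q-(N+2\gamma)}$ (see \cite{TX07,GIT17}), one splits the kernel at a radius $R=R(x)$ chosen pointwise, estimates the near part by the Dunkl Hardy--Littlewood maximal function $Mg_q$ and the far part by H\"older's inequality, and optimises in $R$. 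This yields a Hedberg-type pointwise bound $|f(x)|\lesssim q^{1/p'}\,\norm{g_q}_p^{1-p/q}\,(Mg_q(x))^{p/q}$; raising to the power $q$, integrating, and using the boundedness of $M$ on $L^p(\mu_k)$ for $p>1$ gives $\norm{f}_q\lesssim q^{1/p'}\norm{g_q}_p$. The one factor that grows with $q$ is $\big((N+2\gamma)p'/q\big)^{-1/p'}\sim q^{1/p'}$, coming from the far-part H\"older integral $\big(\int_{|z|\ge R}|z|^{(\sigma_q-N-2\gamma)p'}\,d\mu_k(z)\big)^{1/p'}$; the kernel constant $c_{\sigma_q}$, the geometric sums in the near part, and the maximal bound all stay bounded because $\sigma_q$ remains in a compact subinterval of $(0,N+2\gamma)$. \emph{Step two} is to bound $\norm{g_q}_p$ by Theorem \ref{GN2} applied with $s=\beta$ and $\theta=p/q$ (then $s(1-\theta)=\sigma_q$, and $0\le\theta\le1$, $0\le s\le(N+2\gamma)/p$ hold), giving $\norm{g_q}_p=\norm{(-\Delta_k)^{\sigma_q/2}f}_p\lesssim\norm{Df}_p^{1-p/q}\norm{f}_p^{p/q}$; inspecting the proof of Theorem \ref{GN2} one sees that its constant is uniform in $\theta\in[0,1]$, so no further $q$-dependence enters. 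Combining the two steps yields \eqref{trud_key}.

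The main obstacle is Step one: securing the fractional Sobolev inequality for Dunkl operators \emph{with the sharp rate $q^{1/p'}$} in the constant. A black-box Hardy--Littlewood--Sobolev statement does not suffice; one must run the layer-cake argument and locate exactly where $\sigma_q$ approaches criticality, which in the Dunkl framework also requires the $L^p(\mu_k)$-boundedness of the Dunkl maximal operator and the asymptotics of the Riesz-kernel normalising constant $c_{\sigma_q}$ as $\sigma_q\uparrow\beta$. A minor but worth-noting point is the uniformity in $\theta$ of the constant in Theorem \ref{GN2}, which holds because the multiplier $\psi(\xi)/|\xi|^{s(1-\theta)}$ appearing in its proof has derivative bounds uniform for $s(1-\theta)$ in the compact range $[0,\beta]$.
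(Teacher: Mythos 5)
Your proposal is correct and follows essentially the same route as the paper: expand the Taylor series into $\sum_j \frac{a^j}{j!}\norm{f}_{p'j}^{p'j}$, establish $\norm{f}_q\lesssim q^{1-1/p}\norm{(-\Delta_k)^{\frac{N+2\gamma}{2}(\frac1p-\frac1q)}f}_p$ via a Hedberg-type splitting of the Dunkl--Riesz potential with the constant tracked in $q$ (the paper's Propositions \ref{RieszPotentialBound} and \ref{RieszPotentialBound2}), combine with the Gagliardo--Nirenberg inequality \eqref{GN1_ineq1} at $s=\frac{N+2\gamma}{p}$, and sum using Stirling. Your keeping $q$ away from $p$ (via $q\ge 2p$ and interpolation) plays the same role as the paper's hypothesis $c(p)p<q$, and your flag about uniformity of the Littlewood--Paley constant in $\theta$ is a fair point that the paper also uses implicitly.
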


Before we prove this theorem, we need to an auxiliary result and some background on Riesz potentials.

The Riesz potential for Dunkl operators was defined in \cite{TX07} for $0<s <N+2\gamma$ and for all $f\in \mathcal{S}(\RR^N)$ as 
$$ \mathcal{I}_s (f)(x):= \frac{1}{A_s} \IntN \tau_y f(x) \frac{1}{|y|^{N+2\gamma-s}} \diff\mu_k(y),$$
where $\tau_y$ is the generalised translation operator, and $a_s = 2^{s - \frac{N+2\gamma}{2}} \frac{\Gamma(s/2)}{\Gamma\left(\frac{N+2\gamma-s}{2}\right)}$. In \cite{HMS} and \cite{GIT2017} it was shown that for $1<p<q<\infty$ and $0<s<N+2\gamma$ such that $\frac{1}{p}-\frac{1}{q} = \frac{s}{N+2\gamma}$, we have the inequality
$$ \norm{\mathcal{I}_s f}_q \leq C_{p,q} \norm{f}_p \qquad \forall f \in \mathcal{S}(\RR^N).$$
Following the proof of \cite{GIT2017} carefully, we can estimate the behaviour of the constant $C_{p,q}$ with respect to $q$. More precisely, we have the following result.
 
\begin{prop} \label{RieszPotentialBound}
Let $1<p<q<\infty$ and $0<s<N+2\gamma$ defined by $\frac{1}{p}-\frac{1}{q} = \frac{s}{N+2\gamma}$. Assume that there exists a constant $c(p)>1$ such that $c(p)p <q$. Then, we have
\begin{equation} \label{RieszPotentialBound_eqn} 
\norm{\mathcal{I}_s f}_q \leq C(p) q^{1-\frac{1}{p}} \norm{f}_p \qquad \forall f \in \mathcal{S}(\RR^N),
\end{equation}
for a constant $C(p) >0$ which depends only on $p$.
\end{prop}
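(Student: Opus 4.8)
The plan is to revisit the proof of the Hardy--Littlewood--Sobolev inequality for the Dunkl Riesz potential from \cite{GIT2017} (see also \cite{HMS}) and to carry the dependence on $q$ explicitly through every step; the key observation is that the only place where $q$ enters nontrivially is the tail of the Riesz kernel, and it produces precisely the factor $q^{1-1/p}$. Throughout I write $n=N+2\gamma$ for the homogeneous dimension and use that, in polar coordinates, $\diff\mu_k$ behaves like $r^{n-1}\diff r$ times a finite measure on the sphere.

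First I would fix $x\in\RR^N$ and a radius $R>0$ (to be chosen at the end, depending on $x$) and split the kernel,
\[
\mathcal{I}_s f(x)=\frac{1}{A_s}\IntN \tau_y f(x)\,\frac{\1_{|y|<R}}{|y|^{n-s}}\diff\mu_k(y)+\frac{1}{A_s}\IntN \tau_y f(x)\,\frac{\1_{|y|>R}}{|y|^{n-s}}\diff\mu_k(y)=:\mathcal{I}_s^{<R}f(x)+\mathcal{I}_s^{>R}f(x).
\]
For the local part, decomposing $\{|y|<R\}$ into dyadic annuli and bounding, on each of them, the average of $|\tau_y f(x)|$ by the Dunkl Hardy--Littlewood maximal function $\mathcal{M}_k f(x)$ (here one uses the doubling of $\mu_k$ and the mapping properties of the generalised translation on the radial kernel, exactly as in \cite{GIT2017}), one obtains a pointwise bound
\[
|\mathcal{I}_s^{<R}f(x)|\leq C\,R^{s}\,\mathcal{M}_k f(x),
\]
in which $C$ depends only on $p$: the hypothesis $c(p)p<q$ forces $s=n(\tfrac1p-\tfrac1q)$ to stay bounded away from $0$ (and from $n$, since $p>1$), which keeps the geometric sum $\sum_{j\geq 0}2^{-js}$ and the factor $1/A_s$ under control, and none of these quantities depends on $q$.

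For the global part, H\"older's inequality with exponents $p$ and $p'$ gives
\[
|\mathcal{I}_s^{>R}f(x)|\leq \frac{1}{A_s}\,\norm{f}_p\left(\IntN \frac{\1_{|y|>R}}{|y|^{(n-s)p'}}\diff\mu_k(y)\right)^{1/p'}.
\]
Since $\tfrac1p-\tfrac1q=\tfrac sn$ one has $n-s=n(\tfrac1{p'}+\tfrac1q)$, hence $(n-s)p'-n=\tfrac{np'}{q}>0$, so the tail integral converges and equals a $p$-dependent constant times $\tfrac{q}{np'}R^{-np'/q}$; taking $p'$-th roots,
\[
|\mathcal{I}_s^{>R}f(x)|\leq C(p)\,q^{1/p'}\,R^{-n/q}\,\norm{f}_p .
\]
This single factor $q^{1/p'}=q^{1-1/p}$ is the whole source of the $q$-growth. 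Now choose $R=R(x)$ to balance the two estimates, $R^{s}\mathcal{M}_kf(x)=q^{1/p'}R^{-n/q}\norm{f}_p$; since $s+\tfrac nq=\tfrac np$ this amounts to $R^{n/p}=q^{1/p'}\norm{f}_p/\mathcal{M}_kf(x)$, and substituting back (using $\tfrac{sp}{n}=1-\tfrac pq$) yields the pointwise estimate
\[
|\mathcal{I}_s f(x)|\leq C(p)\,q^{(1-p/q)/p'}\,\norm{f}_p^{1-p/q}\,\big(\mathcal{M}_kf(x)\big)^{p/q}.
\]
Raising this to the $q$-th power, integrating over $\RR^N$, and invoking the $L^p(\mu_k)$-boundedness of $\mathcal{M}_k$ (whose norm depends only on $p$) gives $\norm{\mathcal{I}_s f}_q^q\leq C(p)\,q^{(q-p)/p'}\norm{f}_p^{q}$, i.e. $\norm{\mathcal{I}_s f}_q\leq C(p)\,q^{(1-p/q)/p'}\norm{f}_p\leq C(p)\,q^{1-1/p}\norm{f}_p$, which is \eqref{RieszPotentialBound_eqn}.

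The hard part is the local pointwise estimate: one must dominate the truncated Dunkl--Riesz potential by $\mathcal{M}_kf$ uniformly over the relevant range of $s$, which rests on the behaviour of the generalised translation $\tau_y$ on the radial kernel $|y|^{-(n-s)}$ together with the doubling of $\mu_k$ --- this is the piece of the argument that must be extracted carefully from \cite{GIT2017}. The role of the assumption $c(p)p<q$ is precisely to keep $s$, and hence all the $s$-dependent constants, uniformly bounded, so that the only surviving $q$-dependence is the explicit $q^{1-1/p}$ coming from the tail of the kernel.
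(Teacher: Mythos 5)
Your proposal is correct and follows essentially the same route as the paper: the paper likewise splits the potential at a radius $R$, quotes from \cite{GIT2017} the pointwise bound $|\mathcal{I}_s f(x)|\le C_1(p)\bigl[\tfrac{pq}{q-p}R^sM_kf(x)+q^{1-1/p}R^{-(N+2\gamma)/q}\|f\|_p\bigr]$, uses $c(p)p<q$ to make $\tfrac{pq}{q-p}=\tfrac{N+2\gamma}{s}$ (your lower bound on $s$) and $1/A_s$ depend only on $p$, and then optimises in $R$ and invokes the $L^p$-boundedness of the maximal function. The only difference is that you re-derive the intermediate estimate from \cite{GIT2017} rather than citing it.
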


\begin{proof}
From our assumption we have
$$ 0 < \frac{N+2\gamma}{p} \left( 1- \frac{1}{c(p)}\right)< s < \frac{N+2\gamma}{p},$$
so 
$$ A_s > d(p) >0,$$
where $d(p)$ is a constant that depends only on $p$.

Following the proof of \cite{GIT2017}, we estimate
\begin{equation} \label{Rieszpotentialestimate1} 
|\mathcal{I}_s(f)(x)| \leq C_1(p) \left[ \frac{pq}{q-p} R^s M_kf(x) + q^{1-\frac{1}{p}} R^{-\frac{N+2\gamma}{q}} \norm{f}_p \right],
\end{equation}
which holds for all $R>0$, and where $C_1(p)>0$ is a constant depending only on $p$. Here $M_kf$ is the maximal function associated to $f$ and it is known that $M_k$ is bounded operator on $L^p(\mu_k)$ for $1<p\leq \infty$. See \cite{TX05} for the definition of the maximal function and a proof of this fact.

Using our assumption again, we have
$$ \frac{q}{q-p} > \frac{1}{1-\frac{1}{c(p)}},$$
and so \eqref{Rieszpotentialestimate1} becomes
\begin{equation} \label{Rieszpotentialestimate2} 
|\mathcal{I}_s(f)(x)| \leq C_2(p) q^{1-\frac{1}{p}}\left[ M_kf(x) +  R^{-\frac{N+2\gamma}{q}} \norm{f}_p \right].
\end{equation}
This holds for all $R>0$ and it is optimised for $R=\left( \frac{\norm{f}_p}{M_kf(x)}\right)^{\frac{p}{N+2\gamma}}$, thus obtaining
$$  \norm{\mathcal{I}_s f}_q \leq C_3(p) q^{1-\frac{1}{p}} \norm{M_kf}_p^{\frac{p}{q}} \norm{f}_p^{1-\frac{p}{q}} \leq C_4(p) q^{1-\frac{1}{p}} \norm{f}_p,$$
for constants $C_3(p), C_4(p) >0$ depending only on $p$. This completes the proof.
\end{proof}

In order to be able to use this proposition, we need to invert the Riesz potential and express the inequality in terms of the fractional Dunkl laplacian. This is done in the following proposition, which is a simple consequence of the previous result, but it requires some facts about the Riesz potential.

\begin{prop} \label{RieszPotentialBound2}
Let $1<p<q<\infty$ such that there exists a constant $c(p)>1$ and we have $c(p)p<q$. Then, we have the inequality
$$ \norm{f}_q \leq C(p)q^{1-\frac{1}{p}} \norm{(-\Delta_k)^{\frac{N+2\gamma}{2}\left( \frac{1}{p} -\frac{1}{q}\right)} f}_p \qquad \forall f\in\mathcal{S}(\RR^N),$$
where $C(p)>0$ is a constant depending only on $p$.
\end{prop}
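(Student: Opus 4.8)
The plan is to deduce Proposition \ref{RieszPotentialBound2} from Proposition \ref{RieszPotentialBound} by realizing the identity operator as a composition of the Riesz potential with an appropriate power of the Dunkl Laplacian. The key algebraic fact is that, at the level of the Dunkl transform, $\mathcal{I}_s$ acts as multiplication by $|\xi|^{-s}$ (up to the normalizing constant $A_s$), while $(-\Delta_k)^{s/2}$ acts as multiplication by $|\xi|^s$; hence on a suitable class of functions one has $\mathcal{I}_s \circ (-\Delta_k)^{s/2} = \mathrm{Id}$. I would set $s = \frac{N+2\gamma}{2}\left(\frac{1}{p}-\frac{1}{q}\right)$, so that automatically $\frac{1}{p}-\frac{1}{q} = \frac{s}{N+2\gamma}$ and $0<s<N+2\gamma$ (using $p<q<\infty$), which is exactly the range where Proposition \ref{RieszPotentialBound} applies.

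The steps, in order, would be: first, record that for $f\in\mathcal{S}(\RR^N)$ the function $g := (-\Delta_k)^{s/2}f$ is well-defined (its Dunkl transform is $|\xi|^s \DT(f)(\xi)$, which is in $L^1\cap L^2$ since $\DT(f)$ decays rapidly), and moreover $g$ lies in the class on which the composition identity $\mathcal{I}_s g = f$ is valid — this is where one must cite or invoke the mapping properties of the Riesz potential from \cite{TX07} (and the Lizorkin-space considerations mentioned in the earlier remark). Second, apply Proposition \ref{RieszPotentialBound} to $g$: since $c(p)p<q$ by hypothesis, \eqref{RieszPotentialBound_eqn} gives $\norm{\mathcal{I}_s g}_q \leq C(p) q^{1-1/p}\norm{g}_p$. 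Third, substitute $\mathcal{I}_s g = f$ and $g = (-\Delta_k)^{s/2}f$ to conclude
$$\norm{f}_q \leq C(p) q^{1-\frac{1}{p}} \norm{(-\Delta_k)^{\frac{N+2\gamma}{2}\left(\frac{1}{p}-\frac{1}{q}\right)} f}_p.$$

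The main obstacle is the inversion step: justifying rigorously that $\mathcal{I}_s\bigl((-\Delta_k)^{s/2}f\bigr) = f$ for $f\in\mathcal{S}(\RR^N)$. The Riesz potential $\mathcal{I}_s$ is a priori only defined and well-behaved on restricted classes (e.g. the Lizorkin space $\Phi_k$, where moment conditions kill the singularity issues), and $(-\Delta_k)^{s/2}f$ need not satisfy those moment conditions for general Schwartz $f$; so one either has to argue the identity holds on $\mathcal{S}(\RR^N)$ directly via the Dunkl transform and density, or restrict attention to a dense subclass and then extend by approximation, being careful that both sides of the target inequality are continuous in the relevant topologies. Once this functional-analytic point is settled, the rest is a one-line substitution. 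I would handle it by passing to the Dunkl transform side, where $\widehat{\mathcal{I}_s g} = A_s^{-1}|\xi|^{-s}\widehat{g}$ and $\widehat{(-\Delta_k)^{s/2}f} = |\xi|^s\widehat{f}$, checking the product is integrable near the origin precisely because $s<N+2\gamma$ (so $|\xi|^{-s}$ times a rapidly decaying function is locally integrable against $\diff\mu_k$, whose density has degree $2\gamma$), and then inverting.
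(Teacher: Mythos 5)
Your proposal matches the paper's own proof: both apply Proposition \ref{RieszPotentialBound} to $g=(-\Delta_k)^{s/2}f$ and then establish $\norm{\mathcal{I}_s g}_q=\norm{f}_q$ via the Dunkl-transform identity $\DT(\mathcal{I}_sF)(\xi)=|\xi|^{-s}\DT(F)(\xi)$ (the paper carries this out by pairing against $G\in\mathcal{S}(\RR^N)$, applying Parseval, and extending by density to $G\in L^{q'}(\mu_k)$, exactly the inversion step you flag as the main obstacle). Only fix the factor-of-two slip in your setup: you need $s=(N+2\gamma)\left(\tfrac{1}{p}-\tfrac{1}{q}\right)$, not $\tfrac{N+2\gamma}{2}\left(\tfrac{1}{p}-\tfrac{1}{q}\right)$, so that $\tfrac{1}{p}-\tfrac{1}{q}=\tfrac{s}{N+2\gamma}$ as Proposition \ref{RieszPotentialBound} requires and $(-\Delta_k)^{s/2}$ carries the exponent in the statement; note also that with the normalization of \cite{TX07} the multiplier of $\mathcal{I}_s$ is exactly $|\xi|^{-s}$, with no extra $A_s^{-1}$.
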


\begin{proof}
Taking the function $g=(-\Delta_k)^{\frac{N+2\gamma}{2}\left( \frac{1}{p} -\frac{1}{q}\right)} f \in \mathcal{S}(\RR^N)$ in \eqref{RieszPotentialBound_eqn}, one obtains
\begin{equation} \label{RieszPotentialBound2_step}
\norm{\mathcal{I}_s \left((-\Delta_k)^{\frac{s}{2}} f\right)}_q \leq C(p)q^{1-\frac{1}{p}} \norm{(-\Delta_k)^{\frac{N+2\gamma}{2}\left( \frac{1}{p} -\frac{1}{q}\right)} f}_p,
\end{equation}
where $s=(N+2\gamma) \left( \frac{1}{p} - \frac{1}{q} \right)$.

It is known (see \cite{TX07}) that 
$$ \DT(\mathcal{I}_sF) (\xi) = |\xi|^{-s} \DT(\mathcal{I}_s F)(\xi)$$
in the sense of tempered distributions, i.e.,
$$ \IntN \mathcal{I}_s F(x) \overline{G(x)} \diff\mu_k (x)
= \IntN |\xi|^{-s} \DT(F)(\xi) \overline{\DT(G)(\xi)} \diff\mu_k(\xi),$$
for all $F,G \in \mathcal{S}(\RR^N)$. Taking $F=(-\Delta_k)^{\frac{s}{2}} f$ in this equality, we have
\begin{align*}
\IntN \mathcal{I}_s \left((-\Delta_k)^{\frac{s}{2}} f \right) (x) \overline{G(x)} \diff\mu_k(x)
&=\IntN |\xi|^{-s} \DT\left((-\Delta_k)^{\frac{s}{2}} f \right)(\xi) \overline{\DT(G)}(\xi) \diff\mu_k(\xi)
\\
&=\IntN \DT(f)(\xi) \overline{\DT(G)}(\xi) \diff\mu_k(\xi)
\\
&=\IntN f(x) \overline{G(x)} \diff\mu_k(x),
\end{align*}
where in the last line we used Parseval's identity. This holds for all $G\in\mathcal{S}(\RR^N)$, thus by density we have
$$\IntN \mathcal{I}_s \left((-\Delta_k)^{\frac{s}{2}} f \right) (x) \overline{G(x)} \diff\mu_k(x) 
=\IntN f(x) \overline{G(x)} \diff\mu_k(x) \qquad \forall G\in L^{q'}(\mu_k),$$
and therefore
$$ \norm{\mathcal{I}_s \left((-\Delta_k)^{\frac{s}{2}} f \right)}_q = \norm{f}_q.$$
This, together with \eqref{RieszPotentialBound2_step}, concludes the proof.
\end{proof}

We are now ready to prove Trudinger's inequality for Dunkl operators.

\begin{proof}[Proof of Theorem \ref{trudinger}]
Expanding the exponential on the left hand side of \eqref{trudinger_ineq}, we have
\begin{align} \label{trudinger_proof_expansion}
\int_{\RR^N} \left( \exp(a |f(x)|^{p'}) - \sum_{\substack{ 0\leq j <p-1 \\ j\in\NN}} \frac{1}{j!} (a |f(x)|^{p'})^j \right) \diff\mu_k(x)
= \sum_{\substack{j \geq p-1 \\ j\in\NN}} \frac{1}{j!} a^j \norm{f}_{p'j}^{p'j}.
\end{align}

If $p\not\in \NN$, then $q:=p'j \geq \frac{\lceil p \rceil -1}{p-1}p$, where $\lceil p \rceil$ is the least integer greater of equal to $p$. We can then apply Proposition \ref{RieszPotentialBound2} with $c(p)=\frac{\lceil p \rceil -1}{p-1} >1$ to deduce
$$ \norm{f}_{p'j} \leq C(p) (p'j)^{1-\frac{1}{p}} \norm{(-\Delta_k)^{\frac{N+2\gamma}{2p}\left( 1-\frac{p}{p'j} \right)}f}_p.$$
Then, from the Gagliardo-Nirenberg inequality \eqref{GN1_ineq1} with $s=\frac{N+2\gamma}{p}$ and $\theta=\frac{p}{p'j}$, and using assumption \eqref{trudinger_assumption}, this becomes
\begin{equation} \label{trudinger_p'jnorm} 
\norm{f}_{p'j} \leq C(p) (p'j)^{1-\frac{1}{p}} \norm{f}_p^{\frac{p}{p'j}}.
\end{equation}

If $p\in\NN$ and $j=p-1$, then $p'j=p$, so the first term in the sum on the right hand side is simply $\frac{1}{(p-1)!}a^{p-1} \norm{f}_p^p$. For $j\geq p$, we have $q:=p'j \geq \frac{p^2}{p-1}$, and as above we obtain \eqref{trudinger_p'jnorm}.

Replacing inequality \eqref{trudinger_p'jnorm} back in \eqref{trudinger_proof_expansion}, we obtain
$$\int_{\RR^N} \left( \exp(a |f(x)|^{p'}) - \sum_{\substack{ 0\leq j <p-1 \\ j\in\NN}} \frac{1}{j!} (a |f(x)|^{p'})^j \right) \diff\mu_k(x)
\leq C \norm{f}_p^p,$$
where
$$ C= \sum_{\substack{j \geq p-1 \\ j\in\NN}} \frac{j^j}{j!} (a C(p)^{p'} p')^j.$$
Note that if $aC(p)^{p'}p' < \frac{1}{e}$, i.e., if $a<\frac{1}{ep'C(p)^{p'}}$, then $C<\infty$.
\end{proof}

\section{Caffarelli-Kohn-Nirenberg inequalities}
\label{SEC:CKN} In this section, we introduce Caffarelli-Kohn-Nirenberg type inequalities for Dunkl operators, and explain how they imply \eqref{CKN1_ineq1_intro}-\eqref{CKN3_ineq1_intro}.

Let us start with the following Dunkl analogue of \eqref{CKN1_ineq1_intro}:
\begin{thm}\label{CKN1_thm} Let $1<p<\frac{N+2\gamma}{1+2\gamma}$, $1<q<\infty$, $0<r<\infty$ with $p+q\geq r$. Let also $\delta\in[0,1]\cap\left[\frac{r-q}{r},\frac{p}{r}\right]$ and $b$, $c\in\mathbb{R}$. Assume that
$$\frac{\delta r}{p}+\frac{(1-\delta)r}{q}=1 \quad \text{ and } \quad c=-\delta+b(1-\delta).$$
Then we have the inequality
\begin{equation}\label{CKN1_ineq1}
\||x|^{c}f\|_{r}\lesssim \|\nabla_{k}f\|^{\delta}_{p}\||x|^{b}f\|^{1-\delta}_{q}
\end{equation}
for all $f\in C_c^{\infty}(\Rn)$.
\end{thm}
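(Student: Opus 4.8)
The plan is to combine the $L^p$ Hardy inequality of Theorem \ref{Lp_Hardy_thm} with H\"older's inequality in a weighted interpolation argument, mirroring the structure of the uncertainty principle proof of Theorem \ref{uncer_thm} but now tracking the weight exponent carefully. The assumption $1<p<\frac{N+2\gamma}{1+2\gamma}$ is precisely what is needed to invoke Theorem \ref{Lp_Hardy_thm}, which gives
\begin{equation*}
\left\|\frac{f}{|x|}\right\|_p \lesssim \|\nabla_k f\|_p
\end{equation*}
for all $f\in C_c^\infty(\RR^N)$. Raising this to the power $\delta$ yields $\left\||x|^{-1}f\right\|_p^\delta \lesssim \|\nabla_k f\|_p^\delta$, so it suffices to prove
\begin{equation*}
\||x|^{c}f\|_{r} \lesssim \left\|\frac{f}{|x|}\right\|_p^{\delta}\,\||x|^{b}f\|_q^{1-\delta}.
\end{equation*}

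The key step is then purely a H\"older estimate. Writing $|x|^c|f| = \left(|x|^{-1}|f|\right)^{\delta}\left(|x|^{b}|f|\right)^{1-\delta}$, which is consistent precisely because $c = -\delta + b(1-\delta)$ and the powers of $|f|$ add up to $\delta + (1-\delta) = 1$, I would apply H\"older's inequality with exponents $\frac{p}{\delta r}$ and $\frac{q}{(1-\delta)r}$ to the product $\left(|x|^{-1}|f|\right)^{\delta r}\left(|x|^{b}|f|\right)^{(1-\delta)r}$, integrated against $\diff\mu_k$. The condition $\frac{\delta r}{p}+\frac{(1-\delta)r}{q}=1$ guarantees these are conjugate H\"older exponents, and the constraint $\delta\in[0,1]\cap\left[\frac{r-q}{r},\frac{p}{r}\right]$ is exactly what makes both exponents lie in $[1,\infty]$ (the upper bound $\delta \le p/r$ ensures $\frac{\delta r}{p}\le 1$, the lower bound $\delta \ge \frac{r-q}{r}$ ensures $\frac{(1-\delta)r}{q}\le 1$), so H\"older is legitimate. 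The hypothesis $p+q\ge r$ is a consistency condition ensuring the interval $[0,1]\cap\left[\frac{r-q}{r},\frac{p}{r}\right]$ is nonempty. Carrying out the H\"older step gives
\begin{equation*}
\int_{\RR^N}|x|^{cr}|f|^{r}\diff\mu_k \le \left(\int_{\RR^N}\frac{|f|^{p}}{|x|^{p}}\diff\mu_k\right)^{\frac{\delta r}{p}}\left(\int_{\RR^N}|x|^{bq}|f|^{q}\diff\mu_k\right)^{\frac{(1-\delta)r}{q}},
\end{equation*}
and taking $r$-th roots produces exactly the desired bound.

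I do not anticipate a serious obstacle here: the whole argument is a concatenation of Theorem \ref{Lp_Hardy_thm} raised to a power with one application of H\"older. The only point requiring a little care is the bookkeeping of the exponent constraints --- verifying that the stated range of $\delta$ together with the two equality constraints indeed makes the H\"older exponents conjugate and admissible, and noting the degenerate endpoint cases $\delta=0$ (where the inequality reduces to $\||x|^b f\|_q \lesssim \||x|^b f\|_q$ with $r=q$, $c=b$) and $\delta=1$ (where it reduces to $\||x|^{-1}f\|_p \lesssim \|\nabla_k f\|_p$ with $r=p$, $c=-1$, i.e.\ Theorem \ref{Lp_Hardy_thm} itself). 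As a final remark I would observe that, as in the examples worked out in the introduction, the resulting inequality \eqref{CKN1_ineq1_intro} in the classical case $k\equiv 0$ covers ranges of parameters not accessible from Theorem \ref{clas_CKN}, since the balance condition here is driven by the Hardy scaling $|x|^{-1}$ rather than the condition \eqref{clas_CKN0}.
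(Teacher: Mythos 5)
Your proposal is correct and follows essentially the same route as the paper's proof: the identical pointwise factorisation $|x|^c|f|=(|x|^{-1}|f|)^{\delta}(|x|^{b}|f|)^{1-\delta}$ justified by $c=-\delta+b(1-\delta)$, H\"older with the conjugate exponents $\frac{p}{\delta r}$ and $\frac{q}{(1-\delta)r}$, and then the $L^p$ Hardy inequality of Theorem \ref{Lp_Hardy_thm} applied to the first factor, with the same treatment of the endpoint cases $\delta=0,1$. The only (immaterial) difference is that you invoke Hardy before the H\"older step rather than after.
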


\begin{proof}[Proof of Theorem \ref{CKN1_thm}] The cases $\delta=0$ and $\delta=1$ are trivial. When $\delta=0$ we have $r=q$ and $b=c$ from $\frac{\delta r}{p}+\frac{(1-\delta)r}{q}=1$ and $c=-\delta+b(1-\delta)$, respectively, so the inequality becomes $\norm{|x|^c f}_q \leq C \norm{|x|^c f}_q$. When $\delta=1$ we get $r=p$ and $c=-1$ from $\frac{\delta r}{p}+\frac{(1-\delta)r}{q}=1$ and $c=-\delta+b(1-\delta)$, respectively; this is the exact case of Theorem \ref{Lp_Hardy_thm}.

Therefore, let us prove the theorem for the case $\delta\in(0,1)\cap\left[\frac{r-q}{r},\frac{p}{r}\right]$. Noting $c=-\delta+b(1-\delta)$, a direct calculation gives
$$\||x|^{c}f\|_{r}=
\left(\int_{\Rn}|x|^{cr}|f(x)|^{r}\diff\mu_k\right)^{\frac{1}{r}}
=\left(\int_{\Rn}\frac{|f(x)|^{\delta r}}{|x|^{\delta r }}\cdot \frac{|f(x)|^{(1-\delta)r}}{|x|^{-br(1-\delta)}}\diff\mu_{k}\right)^{\frac{1}{r}}.$$
Since $\delta\in(0,1)\cap\left[\frac{r-q}{r},\frac{p}{r}\right]$ and $p+q\geq r$, then using H\"{o}lder's inequality for $\frac{\delta r}{p}+\frac{(1-\delta)r}{q}=1$, one has
\begin{equation} \label{CKN1_ineq2}
\begin{aligned}
\||x|^{c}f\|_{r}
&\leq \left(\int_{\Rn}\frac{|f(x)|^{p}}{|x|^{p}}\diff\mu_{k}\right)^{\frac{\delta}{p}}
\left(\int_{\Rn}\frac{|f(x)|^{q}}{|x|^{-bq}}\diff\mu_{k}\right)^{\frac{1-\delta}{q}}
\\
&=\left\|\frac{f}{|x|}\right\|^{\delta}_{p}
\left\|\frac{f}{|x|^{-b}}\right\|^{1-\delta}_{q}.
\end{aligned}
\end{equation}
Since $1<p<\frac{N+2\gamma}{1+2\gamma}$, applying Theorem \ref{Lp_Hardy_thm} to the right hand side of \eqref{CKN1_ineq2} we obtain \eqref{CKN1_ineq1}, as required.
\end{proof}

Now we state the following Dunkl analogue of the inequality \eqref{CKN2_ineq1_intro}:
\begin{thm}\label{CKN2_thm} Let $q,r,a,b,c,\delta \in \RR$ such that $1<q<\infty$, $0<r<\infty$,
$$\frac{2(N+2\gamma)}{N+2\gamma-2}+q\geq r, \quad \delta\in[0,1]\cap\left[\frac{r-q}{r},\frac{2(N+2\gamma)}{r(N+2\gamma-2)}\right] \quad \text{and} \;\; N+2\gamma-2+2a>0.$$
Assume also that
$$\frac{\delta r(N+2\gamma-2)}{2(N+2\gamma)}+\frac{(1-\delta)r}{q}=1 \quad \text{ and } \quad c=\delta a+b(1-\delta).$$
Then we have the following Caffarelli-Kohn-Nirenberg type inequality which holds for all $f\in C_c^{\infty}(\Rn)$:
\begin{equation}\label{CKN2_ineq1}
\||x|^{c}f\|_{r}\lesssim \||x|^{a}\nabla_{k}f\|^{\delta}_{2}\||x|^{b}f\|^{1-\delta}_{q}.
\end{equation}
\end{thm}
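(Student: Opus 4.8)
The plan is to mimic the proof of Theorem \ref{CKN1_thm}, replacing the $L^p$ Hardy inequality (Theorem \ref{Lp_Hardy_thm}) by the weighted Hardy (Caffarelli-Kohn-Nirenberg) inequality of Theorem \ref{weigh_Hardy_thm}. First I would dispose of the endpoint cases $\delta = 0$ and $\delta = 1$: when $\delta = 0$ the constraint $\frac{\delta r(N+2\gamma-2)}{2(N+2\gamma)}+\frac{(1-\delta)r}{q}=1$ forces $r = q$, and $c = \delta a + b(1-\delta)$ forces $c = b$, so \eqref{CKN2_ineq1} is a tautology; when $\delta = 1$ the same constraints give $r = \frac{2(N+2\gamma)}{N+2\gamma-2}$ and $c = a$, and the inequality becomes exactly the weighted Sobolev-type inequality obtained from Theorem \ref{weigh_Hardy_thm} with $p=2$ and the parameter choice $b_{\text{Hardy}} = a+1$ (note $N+2\gamma - 2 + 2a > 0$ is precisely the hypothesis $a < \frac{N+2\gamma-2}{2}$ of that theorem applied with its ``$a$'' equal to $a$, its ``$b$'' equal to $a+1$, and its ``$p$'' equal to $\frac{2(N+2\gamma)}{N+2\gamma-2}$).

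For the main range $\delta \in (0,1)$, I would write, using $c = \delta a + b(1-\delta)$,
\begin{equation*}
\||x|^{c}f\|_{r}
=\left(\int_{\Rn}\frac{|f(x)|^{\delta r}}{|x|^{-\delta a r}}\cdot \frac{|f(x)|^{(1-\delta)r}}{|x|^{-br(1-\delta)}}\diff\mu_{k}\right)^{\frac{1}{r}},
\end{equation*}
and then apply H\"older's inequality with exponents $\frac{2(N+2\gamma)}{\delta r(N+2\gamma-2)}$ and $\frac{q}{(1-\delta)r}$, which are conjugate precisely because of the assumption $\frac{\delta r(N+2\gamma-2)}{2(N+2\gamma)}+\frac{(1-\delta)r}{q}=1$, and which are both $\geq 1$ because $\delta \in \left[\frac{r-q}{r},\frac{2(N+2\gamma)}{r(N+2\gamma-2)}\right]$ together with $\frac{2(N+2\gamma)}{N+2\gamma-2}+q\geq r$. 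This yields
\begin{equation*}
\||x|^{c}f\|_{r}\leq \left\|\frac{f}{|x|^{-a}}\right\|_{\frac{2(N+2\gamma)}{N+2\gamma-2}}^{\delta}\left\|\frac{f}{|x|^{-b}}\right\|_{q}^{1-\delta}.
\end{equation*}

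It then remains to bound the first factor by $\||x|^{a}\nabla_k f\|_2$. This is where Theorem \ref{weigh_Hardy_thm} enters: I would apply it with its parameter ``$a$'' replaced by $-a$ reinterpreted appropriately — more precisely, set the weight exponent on the left to $b_{\text{H}} = -a + 1$ wait, let me restate: Theorem \ref{weigh_Hardy_thm} gives $\left\||x|^{-b_{\text H}}f\right\|_{p_{\text H}} \lesssim \left\||x|^{-a_{\text H}}\nabla_k f\right\|_2$ for $a_{\text H} \leq b_{\text H} \leq a_{\text H}+1$, $p_{\text H} = \frac{2(N+2\gamma)}{N+2\gamma-2+2(b_{\text H}-a_{\text H})}$, $a_{\text H} < \frac{N+2\gamma-2}{2}$; choosing $a_{\text H} = -a$, $b_{\text H} = -a$ (so $p_{\text H} = \frac{2(N+2\gamma)}{N+2\gamma-2}$) gives exactly $\left\||x|^{a}f\right\|_{\frac{2(N+2\gamma)}{N+2\gamma-2}} \lesssim \left\||x|^{a}\nabla_k f\right\|_2$, and the hypothesis $a_{\text H} < \frac{N+2\gamma-2}{2}$ reads $-a < \frac{N+2\gamma-2}{2}$; hmm, this is not quite $N+2\gamma-2+2a>0$. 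So the correct bookkeeping is that the intended application uses $b_{\text H} - a_{\text H} = 1$, i.e. $a_{\text H} = a - 1$, $b_{\text H} = a$, giving $p_{\text H} = \frac{2(N+2\gamma)}{N+2\gamma}$ — which is wrong too. The point I want to make is that the delicate part is matching parameters so that the exponent condition ``$a_{\text H} < \frac{N+2\gamma-2}{2}$'' of Theorem \ref{weigh_Hardy_thm} becomes exactly the stated hypothesis $N+2\gamma-2+2a>0$ and the resulting Lebesgue exponent equals $\frac{2(N+2\gamma)}{N+2\gamma-2}$; I expect this parameter-matching (equivalently, identifying the correct value of $b_{\text H}-a_{\text H}$, which should be $2$ here since we are dealing with the Sobolev-critical exponent associated to $\nabla_k$ weighted by $|x|^a$) to be the main obstacle, and once it is pinned down the inequality follows by combining the H\"older step with the weighted Hardy step, exactly as in the proof of Theorem \ref{CKN1_thm}. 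Finally I would note that the special choice of parameters recovers \eqref{CKN2_ineq1_intro} after translating from $N+2\gamma$ back to $N$ in the case $k \equiv 0$.
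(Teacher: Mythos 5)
Your overall strategy is exactly the paper's: dispose of $\delta=0,1$, apply H\"older with conjugate exponents $\frac{2(N+2\gamma)}{\delta r(N+2\gamma-2)}$ and $\frac{q}{(1-\delta)r}$ to reach $\||x|^{c}f\|_{r}\leq \||x|^{a}f\|_{\frac{2(N+2\gamma)}{N+2\gamma-2}}^{\delta}\||x|^{b}f\|_{q}^{1-\delta}$, and then control the first factor by Theorem \ref{weigh_Hardy_thm}. The problem is that you leave the closing step unresolved, and worse, you talk yourself out of the correct answer. Your first guess was right: apply Theorem \ref{weigh_Hardy_thm} with $a_{\mathrm H}=b_{\mathrm H}=-a$, so that $b_{\mathrm H}-a_{\mathrm H}=0$, $p_{\mathrm H}=\frac{2(N+2\gamma)}{N+2\gamma-2}$, and the conclusion is precisely $\||x|^{a}f\|_{\frac{2(N+2\gamma)}{N+2\gamma-2}}\lesssim\||x|^{a}\nabla_k f\|_{2}$. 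The hypothesis $a_{\mathrm H}<\frac{N+2\gamma-2}{2}$ then reads $-a<\frac{N+2\gamma-2}{2}$, which upon multiplying by $2$ and rearranging is \emph{exactly} $N+2\gamma-2+2a>0$ — the stated assumption. You wrote this down and then dismissed it with ``hmm, this is not quite $N+2\gamma-2+2a>0$,'' when in fact it is, verbatim. Your subsequent suggestion that the right bookkeeping is $b_{\mathrm H}-a_{\mathrm H}=2$ is a dead end: Theorem \ref{weigh_Hardy_thm} only permits $a_{\mathrm H}\leq b_{\mathrm H}\leq a_{\mathrm H}+1$, so a gap of $2$ is outside its range (that regime belongs to the Rellich inequality, Theorem \ref{weigh_Rellich_thm}, which involves $\Delta_k$ rather than $\nabla_k$ and is not what is needed here).

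The same sign confusion infects your $\delta=1$ endpoint: there the claim $\||x|^{a}f\|_{\frac{2(N+2\gamma)}{N+2\gamma-2}}\lesssim\||x|^{a}\nabla_k f\|_{2}$ is again Theorem \ref{weigh_Hardy_thm} with $a_{\mathrm H}=b_{\mathrm H}=-a$, not with $b_{\mathrm H}=a+1$ (which would produce the exponent $\frac{2(N+2\gamma)}{N+2\gamma}$ and a different weight on the left). Once these two parameter identifications are corrected, your argument coincides with the paper's proof and is complete.
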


\begin{proof}[Proof of Theorem \ref{CKN2_thm}] The case $\delta=0$ is trivial since we have $r=q$ and $b=c$ from $\frac{\delta r(N+2\gamma-2)}{2(N+2\gamma)}+\frac{(1-\delta)r}{q}=1$ and $c=\delta a+b(1-\delta)$, respectively, so the inequality becomes $\norm{|x|^b f}_{q} \lesssim \norm{|x|^b f}_{q}$.

In the case $\delta=1$ we get $r=\frac{2(N+2\gamma)}{N+2\gamma-2}$ and $c=a$ from $\frac{\delta r(N+2\gamma-2)}{2(N+2\gamma)}+\frac{(1-\delta)r}{q}=1$ and $c=\delta a+b(1-\delta)$, respectively. Since we also have $N+2\gamma-2+2a>0$, this case follows from Theorem \ref{weigh_Hardy_thm}.

Now we consider the case $\delta\in(0,1)\cap\left[\frac{r-q}{r},\frac{2(N+2\gamma)}{r(N+2\gamma-2)}\right]$. Taking into account $c=\delta a+b(1-\delta)$, we have
$$\||x|^{c}f\|_{r}=
\left(\int_{\Rn}|x|^{cr}|f(x)|^{r} \diff \mu_k\right)^{\frac{1}{r}}
=\left(\int_{\Rn}\frac{|f(x)|^{\delta r}}{|x|^{-\delta ar }}\cdot \frac{|f(x)|^{(1-\delta)r}}{|x|^{-br(1-\delta)}}\diff\mu_{k}\right)^{\frac{1}{r}}.$$
Since $\delta\in(0,1)\cap\left[\frac{r-q}{r},\frac{2(N+2\gamma)}{r(N+2\gamma-2)}\right]$ and $\frac{2(N+2\gamma)}{N+2\gamma-2}+q\geq r$, then by H\"{o}lder's inequality for $\frac{\delta r(N+2\gamma-2)}{2(N+2\gamma)}+\frac{(1-\delta)r}{q}=1$, one has
\begin{equation} \label{CKN2_ineq2}
\begin{aligned}
\||x|^{c}f\|_{r}
&\leq \left(\int_{\Rn}\frac{|f(x)|^{\frac{2(N+2\gamma)}{N+2\gamma-2}}}{|x|^{\frac{-2(N+2\gamma)a}{N+2\gamma-2}}}\diff\mu_{k}\right)^{\frac{\delta (N+2\gamma-2)}{2(N+2\gamma)}}
\left(\int_{\Rn}\frac{|f(x)|^{q}}{|x|^{-bq}}\diff\mu_{k}\right)^{\frac{1-\delta}{q}}
\\
&=\left\|\frac{f}{|x|^{-a}}\right\|^{\delta}_{\frac{2(N+2\gamma)}{N+2\gamma-2}}
\left\|\frac{f}{|x|^{-b}}\right\|^{1-\delta}_{q}.
\end{aligned}
\end{equation}
On the other hand, since $N+2\gamma-2+2a>0$, by Theorem \ref{weigh_Hardy_thm} we have
\begin{equation}\label{CKN2_ineq3}
 \left\|\frac{f}{|x|^{-a}}\right\|^{\delta}_{\frac{2(N+2\gamma)}{N+2\gamma-2}}\lesssim \left\|\frac{\nabla_{k}f}{|x|^{-a}}\right\|^{\delta}_{2}
 \end{equation}
for $f\in C_c^{\infty}(\Rn)$. Thus, combining \eqref{CKN2_ineq2} and \eqref{CKN2_ineq3} we obtain \eqref{CKN2_ineq1}, as required.
\end{proof}

Finally, we conclude this section with a Caffarelli-Kohn-Nirenberg inequality for fractional Dunkl Laplacian, which is a generalisation of \eqref{CKN3_ineq1_intro}.

\begin{thm}\label{CKN3_thm} Let $a,b,c \in \RR$, $\delta\in[0,1]\cap\left[\frac{r-q}{r},\frac{2}{r}\right]$, $1<q<\infty$ and $0<r<\infty$ with $2+q\geq r$. Assume that
$$\frac{\delta r}{2}+\frac{(1-\delta)r}{q}=1 \quad \text{ and } \quad c=\delta (a-1)+b(1-\delta),$$
and also $1-\frac{N+2\gamma}{2}<a\leq 1$. Then we have the inequality
\begin{equation}\label{CKN3_ineq1}
\||x|^{c}f\|_{r}\leq\frac{1}{C(a)^{\delta}} \|(-\Delta_{k})^{(1-a)/2}f\|^{\delta}_{2}\||x|^{b}f\|^{1-\delta}_{q}
\end{equation}
for all $f\in \mathcal{S}(\Rn)$, where $C(a)=2^{1-a}\frac{\Gamma\left(\frac{1}{2}\left(\frac{N}{2}+\gamma+1-a\right)\right)}
{\Gamma\left(\frac{1}{2}\left(\frac{N}{2}+\gamma-1+a\right)\right)}$.
\end{thm}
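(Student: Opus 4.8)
The plan is to mirror the strategy used for Theorems~\ref{CKN1_thm} and~\ref{CKN2_thm}: dispose of the endpoints $\delta=0,1$, then for $\delta\in(0,1)$ factor the weight $|x|^{c}$ using the identity $c=\delta(a-1)+b(1-\delta)$, apply H\"older's inequality, and finish by invoking the fractional Hardy inequality of Theorem~\ref{frac_Hardy_thm}.

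First I would treat the trivial cases. When $\delta=0$, the relation $\frac{\delta r}{2}+\frac{(1-\delta)r}{q}=1$ forces $r=q$, and then $c=b$, so \eqref{CKN3_ineq1} is the tautology $\||x|^{b}f\|_{r}\leq\||x|^{b}f\|_{r}$ (the constant being $C(a)^{0}=1$). When $\delta=1$, the same relation gives $r=2$ and $c=a-1$, and \eqref{CKN3_ineq1} reduces to $C(a)\left\|\frac{f}{|x|^{1-a}}\right\|_{2}\leq\|(-\Delta_{k})^{(1-a)/2}f\|_{2}$; this is precisely Theorem~\ref{frac_Hardy_thm} with $s=1-a$, because the hypothesis $1-\frac{N+2\gamma}{2}<a\leq1$ is exactly the condition $0\leq1-a<\frac{N+2\gamma}{2}$ required there, and the sharp constant $C(1-a)$ appearing in Theorem~\ref{frac_Hardy_thm} coincides with the $C(a)$ of the statement.

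For $\delta\in(0,1)$ I would use $c=\delta(a-1)+b(1-\delta)$ to write $|x|^{cr}|f|^{r}=\left(|x|^{a-1}|f|\right)^{\delta r}\left(|x|^{b}|f|\right)^{(1-\delta)r}$, and then apply H\"older's inequality with the conjugate exponents $\frac{2}{\delta r}$ and $\frac{q}{(1-\delta)r}$. These lie in $(1,\infty)$ and are conjugate precisely because of the hypotheses $\delta\leq\frac{2}{r}$ and $\delta\geq\frac{r-q}{r}$ (the interval $\left[\frac{r-q}{r},\frac{2}{r}\right]$ being compatible with $[0,1]$ thanks to $2+q\geq r$) together with the assumed identity $\frac{\delta r}{2}+\frac{(1-\delta)r}{q}=1$; since both summands of that identity are strictly positive in this range, each exponent is strictly larger than $1$, so neither degenerates. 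This yields $\||x|^{c}f\|_{r}\leq\left\|\frac{f}{|x|^{1-a}}\right\|_{2}^{\delta}\||x|^{b}f\|_{q}^{1-\delta}$.

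Finally, applying Theorem~\ref{frac_Hardy_thm} with $s=1-a$ (legitimate for the same reason as above) gives $\left\|\frac{f}{|x|^{1-a}}\right\|_{2}\leq\frac{1}{C(a)}\|(-\Delta_{k})^{(1-a)/2}f\|_{2}$; raising this to the power $\delta$ and combining with the previous bound produces \eqref{CKN3_ineq1}. I anticipate no genuine obstacle in this argument: the only point deserving care is the elementary bookkeeping that confirms the H\"older exponents belong to $[1,\infty)$ and that the fractional Hardy inequality is applicable with $s=1-a$, both of which follow immediately from the stated ranges of the parameters.
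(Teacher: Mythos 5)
Your proposal is correct and follows essentially the same route as the paper's proof: the same treatment of the endpoints $\delta=0,1$, the same factorisation of the weight via $c=\delta(a-1)+b(1-\delta)$ followed by H\"older's inequality with exponents $\frac{2}{\delta r}$ and $\frac{q}{(1-\delta)r}$, and the same final application of Theorem~\ref{frac_Hardy_thm} with $s=1-a$. Your explicit check that $C(1-a)$ from the fractional Hardy inequality coincides with the stated $C(a)$, and that the H\"older exponents are nondegenerate, is slightly more careful bookkeeping than the paper records, but the argument is identical in substance.
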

\begin{remark}
In the special case when $\gamma=0$ we get that $\nabla_{k}=\nabla$, the usual gradient in $\Rn$. Then, in this case we see that Theorems \ref{CKN1_thm}-\ref{CKN3_thm} imply \eqref{CKN1_ineq1_intro}-\eqref{CKN3_ineq1_intro}, respectively.
\end{remark}

\begin{proof}[Proof of Theorem \ref{CKN3_thm}] In the case $\delta=0$ we have $r=q$ and $b=c$ from $\frac{\delta r}{2}+\frac{(1-\delta)r}{q}=1$ and $c=\delta (a-1)+b(1-\delta)$, so we are in the trivial case $\norm{|x|^b f}_{q} \leq  \norm{|x|^b f}_{q}$.

When $\delta=1$, we get $r=2$ and $c=a-1$ from $\frac{\delta r}{2}+\frac{(1-\delta)r}{q}=1$ and $c=\delta (a-1)+b(1-\delta)$, respectively. Then, it is easy to see that this the exact case of Theorem \ref{frac_Hardy_thm} since $1-\frac{N+2\gamma}{2}<a\leq 1$.

Now let us consider the case $\delta\in(0,1)\cap\left[\frac{r-q}{r},\frac{2}{r}\right]$. Noting that $c=\delta(a-1)+b(1-\delta)$, a direct calculation gives
$$\||x|^{c}f\|_{r}=
\left(\int_{\Rn}|x|^{cr}|f(x)|^{r}\diff x\right)^{\frac{1}{r}}
=\left(\int_{\Rn}\frac{|f(x)|^{\delta r}}{|x|^{\delta r(1-a) }}\cdot \frac{|f(x)|^{(1-\delta)r}}{|x|^{-br(1-\delta)}}\diff\mu_{k}\right)^{\frac{1}{r}}.$$
Since $\delta\in(0,1)\cap\left[\frac{r-q}{r},\frac{2}{r}\right]$ and $2+q\geq r$, then using H\"{o}lder's inequality for $\frac{\delta r}{2}+\frac{(1-\delta)r}{q}=1$, one has
\begin{equation} \label{CKN3_ineq2}
\begin{aligned}
\||x|^{c}f\|_{r}
&\leq \left(\int_{\Rn}\frac{|f(x)|^{2}}{|x|^{2(1-a)}}\diff\mu_{k}\right)^{\frac{\delta}{2}}
\left(\int_{\Rn}\frac{|f(x)|^{q}}{|x|^{-bq}}\diff\mu_{k}\right)^{\frac{1-\delta}{q}}
\\
&=\left\|\frac{f}{|x|^{1-a}}\right\|^{\delta}_{2}
\left\|\frac{f}{|x|^{-b}}\right\|^{1-\delta}_{q}
\end{aligned}
\end{equation}
Since $1-\frac{N+2\gamma}{2}<a\leq 1$, then applying Theorem \ref{frac_Hardy_thm} to the right hand side of \eqref{CKN3_ineq2} we obtain \eqref{CKN3_ineq1}, as required.
\end{proof}

\section{An application: nonlinear damped wave equations for the Dunkl Laplacian}
\label{SEC:Appl}

In this section we use the Gagliardo-Nirenberg inequality of Theorem \ref{GNineq} obtained above to study the nonlinear damped wave equation \eqref{Cauchy_problem}. As a first step, in Section \ref{linearwaveeq} we study the linear version of this equation, while in Section \ref{nonlinearwaveeq} we prove the main result that concerns global existence and uniqueness of solution for small data. This follows the treatment of \cite{RT}, where similar problems are considered for Rockland operators on Lie groups.

\subsection{Linear Damped Wave Equation} \label{linearwaveeq}

Let us first consider the Cauchy problem on $(t,x)\in \RR_+ \times \RR^N$
\begin{equation} \label{linearwave}
\begin{cases}
\partial_{tt}^2 u(t,x) - \Delta_k u(t,x) + b\partial_t u(t,x) +m u(t,x) =0 \\
u(0,x) = u_0(x)\\
\partial_t u(0,x) = u_1(x),
\end{cases}
\end{equation}
where $b,m>0$. We take Dunkl transform with respect to the variable $x$ in this equation to obtain
\begin{equation} \label{linearwaveDT}
\begin{cases}
\partial_{tt}^2 \DT(u)(t,\xi) + |\xi|^2 \DT(u)(t,\xi) + \partial_t \DT(u)(t,\xi)  + m \DT(u)(t,\xi)=0 \\
\DT(u)(0,\xi) = \DT(u_0)(\xi)\\
\partial_t \DT(u)(0,\xi) = \DT(u_1)(\xi).
\end{cases}
\end{equation}
To simplify notation, let $U:=\DT(u)$, $U_0:=\DT(u_0)$ and $U_1:=\DT(u_1)$. In this notation, we have to solve the second order linear homogeneous ordinary differential equation in $t$
\begin{equation} \label{linearDT}
\partial_{tt}^2 U + b\partial_t U + (m+|\xi|^2) U=0,
\end{equation}
with initial conditions $U(0,\xi)=U_0(\xi)$ and $\partial_t U(0,\xi)=U_1(\xi)$. As is well known, the solution of this equation depends on the sign of
$$ D:= b^2-4(m+|\xi|^2)$$
and we treat the three different cases separately below.

\vspace{5pt}
\noindent\textbf{Case 1:} $D>0$. The solution of (\ref{linearDT}) is then
\begin{equation} \label{linearwavecase1}
U(t,\xi)
= e^{-\frac{b}{2}t}
	\left[
		U_0(\xi) \left(\cosh \frac{t\sqrt{D}}{2} +\frac{b}{\sqrt{D}} \sinh \frac{t\sqrt{D}}{2}\right)
		+U_1(\xi) \frac{2}{\sqrt{D}} \sinh \frac{t\sqrt{D}}{2}
	\right].
\end{equation}
This implies that
\begin{equation} \label{estimatewavecase1}
|U(t,\xi)|
\leq e^{-\frac{b}{2}t + \frac{\sqrt{D}}{2}t}
\left[ |U_0(\xi)| (1+ \frac{bt}{2}) + |U_1(\xi)| \frac{1}{\sqrt{D}}
\right],
\end{equation}
where we used the fact that $\frac{1-e^{-z}}{z}\leq 1$ for all $z\geq 0$.

For any $c>0$ we have $\frac{bct}{2} \leq e^{bct/2}$, so
$$ \frac{bt}{2} e^{-\frac{b}{2}t + \frac{\sqrt{D}}{2}t}
\leq \frac{1}{c} e^{-\frac{b}{2}(1-c)t + \frac{\sqrt{D}}{2}t}.$$
We know that for any $\xi\in\RR^N$ we have $D\leq b^2-4m$, so
$$ -\frac{b}{2}(1-c) + \frac{\sqrt{D}}{2} \leq -\frac{b}{2}(1-c)+\frac{\sqrt{b^2-4m}}{2} <0$$
if we choose $0<c<1-\frac{\sqrt{b^2-4m}}{b}$. In this case, we obtain $\delta>0$ independent of $\xi$ or $t$  such that
$$ \frac{bt}{2} e^{-\frac{b}{2}t + \frac{\sqrt{D}}{2}t}
\leq \frac{1}{c} e^{-\delta t}$$
and
$$e^{-\frac{b}{2}t + \frac{\sqrt{D}}{2}t}
\leq  e^{-\delta t}$$
for all $t$ and $\xi$. Consequently, (\ref{estimatewavecase1}) becomes
\begin{equation} \label{estimatecase1}
|U(t,\xi)| \leq C e^{-\delta t} \left[ |U_0(\xi)| + \frac{1}{\sqrt{D}} |U_1(\xi)| \right].
\end{equation}

\vspace{5pt}
\noindent\textbf{Case 2:} $D<0$. The solution of equation (\ref{linearDT}) in this case is
\begin{equation} \label{linearwavecase2}
U(t,\xi)
= e^{-\frac{bt}{2}} \left[ U_0(\xi) \left( \cos \frac{t\sqrt{|D|}}{2} + \frac{b}{\sqrt{|D|}} \sin \frac{t\sqrt{|D|}}{2} \right) + U_1(\xi) \frac{2}{\sqrt{|D|}} \sin \frac{t\sqrt{|D|}}{2} \right].
\end{equation}
We then have that
$$ |U(t,\xi)| \leq e^{-\frac{bt}{2}} \left[ |U_0(\xi)| (1+\frac{bt}{2} ) + \frac{2}{\sqrt{|D|}} |U_1(\xi)| \right],$$
and using, for example, the inequality $\frac{bt}{2}e^{-\frac{bt}{2}} \leq e^{-\frac{bt}{4}}$, we obtain the same estimate (\ref{estimatecase1}) as above for a constant $\delta>0$ independent of $t$ or $\xi$.

\vspace{5pt}
\noindent\textbf{Case 3:} $D=0$. The solution of (\ref{linearDT}) in this case is
\begin{equation} \label{linearwavecase3}
U(t,\xi) = e^{-\frac{bt}{2}} \left[ U_0(\xi) \left( 1+ \frac{bt}{2}\right) + tU_1(\xi) \right].
\end{equation}
A similar reasoning to above shows that in this case there exists a positive constant $\delta$ such that we have the estimate
\begin{equation} \label{estimatecase3}
|U(t,\xi)| \leq Ce^{-\delta t} \left[ |U_0(\xi)| + |U_1(\xi)| \right].
\end{equation}

\begin{prop} \label{estimateslinearDWE}
Fix $s\in\RR$. Let $u$ be a solution of the Cauchy problem (\ref{linearwave}) with initial data $u_0\in \DS{s}$ and $u_1\in \DS{s-1}$. Then there exists a constant $\delta>0$ (which does not depend on the initial data) such that, for all $t>0$ we have the estimates
\begin{equation} \label{linearestimate1}
\norm{u(t,\cdot)}_{\DS{s}}^2
\leq C e^{-2\delta t} \left( \norm{u_0}_{\DS{s}}^2 + \norm{u_1}^2_{\DS{s-1}}\right),
\end{equation}
Moreover, if $l\in \NN_0$ and $u_0 \in \DS{s+l}$ and $u_1\in \DS{s+l-1}$, then we have the estimate
\begin{equation} \label{linearestimate2} \norm{\partial_t^l u(t,\cdot)}_{\DS{s}}^2 \leq C e^{-2\delta t} \left( \norm{u_0}_{\DS{s+l}}^2 + \norm{u_1}^2_{\DS{s+l-1}}\right).
\end{equation}
\end{prop}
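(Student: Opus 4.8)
The plan is to reduce everything to the pointwise estimates on $U(t,\xi) = \DT(u)(t,\xi)$ already derived in the three cases above, and then integrate against the appropriate weight $(1+|\xi|^2)^s$ using the Plancherel formula for the Dunkl transform. For the first estimate \eqref{linearestimate1}, I would argue as follows. Regardless of the sign of $D = D(\xi)$, the computations leading to \eqref{estimatecase1} and \eqref{estimatecase3} give a uniform bound of the shape
$$ |U(t,\xi)| \leq C e^{-\delta t}\left( |U_0(\xi)| + \tfrac{1}{\sqrt{|D|}}|U_1(\xi)| \right) $$
when $D \neq 0$ is bounded away from $0$, and $|U(t,\xi)| \leq C e^{-\delta t}(|U_0(\xi)| + |U_1(\xi)|)$ when $D = 0$. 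Since $|D(\xi)| = |b^2 - 4m - 4|\xi|^2|$, the factor $1/\sqrt{|D|}$ is problematic only on the sphere $|\xi|^2 = (b^2-4m)/4$ (which has $\mu_k$-measure zero) — so one has to be slightly careful. The clean way around this is to observe that on the region where $|D(\xi)|$ is small, $|\xi|$ is comparable to a fixed positive constant, hence $1/\sqrt{|D|} \lesssim (1+|\xi|^2)^{-1/2} \cdot (\text{something bounded})$ is false in general; instead I would use that the explicit solution formulas \eqref{linearwavecase1}, \eqref{linearwavecase2}, \eqref{linearwavecase3} depend continuously on $\xi$ and that $\frac{\sinh z}{z}, \frac{\sin z}{z}$ extend continuously through $z = 0$, so in fact $|U(t,\xi)| \leq C e^{-\delta t}(|U_0(\xi)| + |U_1(\xi)|)$ holds uniformly in $\xi$ for $|\xi|$ in any bounded set, while for large $|\xi|$ we are in Case 2 with $1/\sqrt{|D|} \lesssim 1/|\xi| \leq (1+|\xi|^2)^{-1/2}$.

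Granting the uniform bound $|U(t,\xi)| \leq C e^{-\delta t}\left( |U_0(\xi)| + (1+|\xi|^2)^{-1/2}|U_1(\xi)| \right)$, I would square it, multiply by $(1+|\xi|^2)^s$, integrate $\diff\mu_k(\xi)$, and use $(a+b)^2 \leq 2a^2 + 2b^2$:
$$ \IntN (1+|\xi|^2)^s |U(t,\xi)|^2 \diff\mu_k \leq C e^{-2\delta t}\left( \IntN (1+|\xi|^2)^s |U_0|^2 \diff\mu_k + \IntN (1+|\xi|^2)^{s-1}|U_1|^2 \diff\mu_k \right). $$
By the definition of the norm $\norm{\cdot}_{\DS{s}}$ via the Dunkl transform, the left side is $\norm{u(t,\cdot)}_{\DS{s}}^2$ and the right side is $Ce^{-2\delta t}(\norm{u_0}_{\DS{s}}^2 + \norm{u_1}_{\DS{s-1}}^2)$, which is precisely \eqref{linearestimate1}.

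For the higher time-derivative estimate \eqref{linearestimate2}, I would differentiate the ODE \eqref{linearDT} in $t$: setting $V = \partial_t^l U$, one checks by induction that $V$ again solves $\partial_{tt}^2 V + b\partial_t V + (m+|\xi|^2)V = 0$, and its initial data $V(0,\xi), \partial_t V(0,\xi)$ are expressed through $U_0(\xi)$, $U_1(\xi)$ and the equation as linear combinations with coefficients that are polynomials in $|\xi|^2$ of degree $\lfloor l/2\rfloor$ and $\lceil l/2\rceil$ respectively — concretely, from \eqref{linearDT}, $\partial_{tt}^2 U(0,\xi) = -bU_1(\xi) - (m+|\xi|^2)U_0(\xi)$, and iterating. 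Hence $|V(0,\xi)| + (1+|\xi|^2)^{-1/2}|\partial_t V(0,\xi)| \lesssim (1+|\xi|^2)^{l/2}|U_0(\xi)| + (1+|\xi|^2)^{(l-1)/2}|U_1(\xi)|$. Applying the already-established pointwise decay bound to $V$ in place of $U$ and repeating the Plancherel argument with weight $(1+|\xi|^2)^s$ yields \eqref{linearestimate2}. The main obstacle, and the only genuinely delicate point, is the uniform-in-$\xi$ control of the solution formula near the transition set $D(\xi) = 0$; once one notes that the hyperbolic/trigonometric functions appearing there are entire and that $\frac{\sinh z}{z}$, $\frac{\sin z}{z}$ are bounded on compact sets, the singular-looking $1/\sqrt{D}$ factor is harmless and everything else is a routine Plancherel computation.
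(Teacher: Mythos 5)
Your proposal is correct and follows essentially the same strategy as the paper: reduce to pointwise bounds on $U(t,\xi)$, handle the degenerate set where $D(\xi)$ is near zero separately (the paper splits over $B=\{|D|<1\}$ and $B^C$, using $|\sinh z/z|\le 1$ and $|\sin z/z|\le 1$ on $B$ exactly as you propose for bounded $|\xi|$, and boundedness of $(1+|\xi|^2)/|D|$ on $B^C$ for the rest), then integrate against $(1+|\xi|^2)^s$ via Plancherel. The only divergence is in \eqref{linearestimate2}, where the paper differentiates the explicit solution formulas in $t$, whereas you observe that $\partial_t^l U$ solves the same ODE with initial data obtained by iterating \eqref{linearDT}, and then reuse \eqref{linearestimate1}; both work, and your reduction is arguably the tidier bookkeeping.
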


\begin{proof}
We first consider the case when $4m>b^2$. Then $D<0$ for all $\xi\in\RR^N$, so we are always in the Case 2 above. We have, keeping the same notation as above and using estimate (\ref{estimatecase1}), that
\begin{align*}
\norm{u(t,\cdot)}_{\DS{s}}^2
&= \int_{\RR^N} (1+|\xi|^2)^s |U(t,\xi)|^2 \diff\xi
\\
&\leq C e^{-2\delta t} \int_{\RR^N} (1+|\xi|^2)^s |U_0(t,\xi)|^2  \diff\xi
+ C e^{-2\delta t} \int_{\RR^N} \frac{(1+|\xi|^2)^s}{4m-b^2 + 4|\xi|^2} |U_1(t,\xi)|^2  \diff\xi.
\end{align*}
Due to the condition $4m>b^2$, the function $\xi\mapsto \frac{1+|\xi|^2}{4m-b^2+4|\xi|^2}$ is bounded on $\RR^N$, so the above estimates implies
\begin{align*}
\norm{u(t,\cdot)}_{\DS{s}}^2
&\leq C e^{-2\delta t} \int_{\RR^N} (1+|\xi|^2)^s |U_0(t,\xi)|^2  \diff\xi
+C e^{-2\delta t} \int_{\RR^N} (1+|\xi|^2)^{s-1} |U_1(t,\xi)|^2  \diff\xi
\\
&=C e^{-2\delta t} \left( \norm{u_0}_{\DS{s}}^2 + \norm{u_1}^2_{\DS{s-1}}\right),
\end{align*}
as required.

The remaining case $4m \leq b^2$ is a little more involved. The issue in the above computations is that now we have a singularity at $|\xi|=\frac{\sqrt{b^2-4m}}{2}$. We will now split our integral over $B:=\{ \xi\in\RR^N : |D|<1 \}$ and its complement, $B^C=\RR^N\setminus B$; more precisely, we use the following
\begin{equation} \label{decompositionB-BC}
\norm{u(t,\cdot)}_{\DS{s}}^2
= \int_{B} (1+|\xi|^2)^s |U(t,\xi)|^2 \diff\xi
+ \int_{B^C} (1+|\xi|^2)^s |U(t,\xi)|^2 \diff\xi.
\end{equation}

On $B^C$ we are always in Cases 1 and 2, and in both cases we have the same estimate (\ref{estimatecase1}). Using the same reasoning as above, because on $B^C$ the function $\xi\mapsto \frac{1+|\xi|^2}{|b^2-4m-4|\xi|^2|}$ is bounded, we deduce that
\begin{equation} \label{estimateBC}
\begin{aligned}
\int_{B^C} &(1+|\xi|^2)^s |U(t,\xi)|^2 \diff\xi
\\
&\qquad
\leq Ce^{-2\delta t} \left[\int_{B^C} (1+|\xi|^2)^s |U_0(\xi)|^2 \diff\xi
+ \int_{B^C} (1+|\xi|^2)^{s-1} |U_1(\xi)|^2 \diff\xi \right].
\end{aligned}
\end{equation}

We now turn to the integral over $B$. Starting from the explicit expressions we obtained in (\ref{linearwavecase1}) and (\ref{linearwavecase2}), using the fact that $\left| \frac{\sinh z}{z}\right| \leq 1$ and $\left| \frac{\sin z}{z}\right| \leq 1$ for all $z\neq 0$, we find the following estimate which holds in all cases
$$ |U(t,\xi)| \leq e^{-\tilde{\delta}t} \left[ |U_0(\xi)| (1+ \frac{bt}{2}) + t|U_1(\xi)| \right],$$
for some $\tilde{\delta}>0$. As in Case 3, this implies that the estimate (\ref{estimatecase3}) holds in general. Thus, we have the inequality
$$ \int_{B} (1+|\xi|^2)^s |U(t,\xi)|^2 \diff\xi
\leq Ce^{-2\delta t} \left[\int_B (1+|\xi|^2)^s |U_0(\xi)|^2 \diff\xi
+ \int_B (1+|\xi|^2)^s |U_1(\xi)|^2 \diff\xi \right].$$
But on $B$ we have $|D|<1$, so
$$4|\xi|^2 \leq |4|\xi|^2 + 4m - b^2| + b^2-4m <b^2-4m +1.$$
In other words, on $B$ the quantity $1+|\xi|^2$ is bounded above by a constant independent of $t$, so the estimate above becomes
\begin{equation} \label{estimateB}
\begin{aligned}
\int_{B} &(1+|\xi|^2)^s |U(t,\xi)|^2 \diff\xi
\\
&\qquad
\leq Ce^{-2\delta t} \left[\int_B (1+|\xi|^2)^s |U_0(\xi)|^2 \diff\xi
+ \int_B (1+|\xi|^2)^{s-1} |U_1(\xi)|^2 \diff\xi \right].
\end{aligned}
\end{equation}
Putting together (\ref{decompositionB-BC}), (\ref{estimateBC}) and (\ref{estimateB}), this completes the proof of estimate (\ref{linearestimate1}).

For the second estimate (\ref{linearestimate2}), we differentiate the explicit formulas (\ref{linearwavecase1}), (\ref{linearwavecase2}) and (\ref{linearwavecase3}) with respect to time, and then perform similar estimates as above. The proof of the Proposition is thus complete.
\end{proof}

\subsection{The Nonlinear Damped Wave Equation} \label{nonlinearwaveeq}

In this section we consider the nonlinear wave equation
\begin{equation} \label{dampedwaveeqn}
\begin{cases}
\partial_{tt}^2 u(t,x) - \Delta_k u(t,x) + b \partial_t u(t,x) + m u(t,x) = f(u)(t,x) \\
u(0,x)=\epsilon u_1(x) \\
\partial_t u(0,x) = \epsilon u_2(x),
\end{cases}
\end{equation}
where $b,m, \epsilon>0$, and the nonlinearity is a function $f:\RR\to\RR$ that will satisfy the assumptions
\begin{equation} \label{nonlinearassumption1}
f(0)=0
\end{equation}
and
\begin{equation} \label{nonlinearassumption2}
|f(a)-f(b)|\leq C (|a|^{p-1} + |b|^{p-1}) |a-b|,
\end{equation}
for some $p>1$.

Our main result is the following global existence result.

\begin{thm}
Suppose that $N+2\gamma>2$ and let $1\leq p \leq \frac{N+2\gamma}{N+2\gamma-2}$. Let $u_0\in \DSS$ and $u_1\in L^2(\mu_k)$. Assume that $f$ is a function that satisfies the assumption (\ref{nonlinearassumption1}-\ref{nonlinearassumption2}). Then there exists a constant $\epsilon_0>0$ such that for any $0<\epsilon<\epsilon_0$ the Cauchy problem (\ref{dampedwaveeqn}) has a unique global solution
$$u\in C(\RR_+;\DSS)\cap C^1(\RR_+;L^2(\mu_k)).$$
Moreover, there exists a constant $\delta_0>0$ such that the solution satisfies the estimate
$$ \norm{u(t,\cdot)}_{\DS{1}}+\norm{\partial u(t,\cdot)}_2 \leq C e^{-\delta_0 t}$$
for all $t>0$.
\end{thm}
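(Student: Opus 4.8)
The plan is to recast the Cauchy problem \eqref{dampedwaveeqn} as a Duhamel integral equation and to solve it by a contraction mapping argument in a function space that builds in the asserted exponential decay. First I would write, for $L^2(\mu_k)$ data, the propagator $\mathcal{E}_1(t)$ of the linear equation \eqref{linearwave} — the map sending $u_1$ to the solution with $u_0=0$, read off from the explicit formulas \eqref{linearwavecase1}, \eqref{linearwavecase2}, \eqref{linearwavecase3} — noting that $\mathcal{E}_1(0)=0$. Let $u_{\mathrm{lin}}$ denote the solution of \eqref{linearwave} with data $(\epsilon u_1,\epsilon u_2)$. By Duhamel's principle, solving \eqref{dampedwaveeqn} amounts to finding a fixed point of
$$\Phi(u)(t):=u_{\mathrm{lin}}(t)+\int_0^t\mathcal{E}_1(t-\tau)\,f(u(\tau))\,\diff\tau .$$
Fixing a decay rate $\delta_0\in(0,\delta/p)$, where $\delta>0$ is the exponent supplied by Proposition \ref{estimateslinearDWE}, I would work in the Banach space
$$X:=\Big\{u\in C(\RR_+;\DSS)\cap C^1(\RR_+;L^2(\mu_k)):\ \|u\|_X:=\sup_{t\ge0}e^{\delta_0 t}\big(\norm{u(t,\cdot)}_{\DS{1}}+\norm{\partial_t u(t,\cdot)}_2\big)<\infty\Big\},$$
whose closed balls $B_R=\{\,\|u\|_X\le R\,\}$ are complete metric spaces.

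The linear part is immediate from Proposition \ref{estimateslinearDWE}: taking $s=1$ and $l=0,1$ there, $u_{\mathrm{lin}}\in X$ with $\|u_{\mathrm{lin}}\|_X\lesssim\epsilon(\norm{u_1}_{\DS{1}}+\norm{u_2}_2)$, using $\delta_0\le\delta$. For the nonlinearity, \eqref{nonlinearassumption1}--\eqref{nonlinearassumption2} give $|f(s)|\le C|s|^p$, so $\norm{f(u(\tau))}_2\lesssim\norm{u(\tau)}_{2p}^{p}$; since the hypothesis $1\le p\le\frac{N+2\gamma}{N+2\gamma-2}$ is exactly $2\le 2p\le\frac{2(N+2\gamma)}{N+2\gamma-2}$, Corollary \ref{GNcorollary2} yields $\norm{u(\tau)}_{2p}\lesssim\norm{u(\tau)}_{\DSS}\le e^{-\delta_0\tau}\|u\|_X$, hence $\norm{f(u(\tau))}_2\lesssim e^{-p\delta_0\tau}\|u\|_X^p$. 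Applying Proposition \ref{estimateslinearDWE} to $\mathcal{E}_1$ and to $\partial_t\mathcal{E}_1$ acting on $L^2(\mu_k)$ data, and using $\mathcal{E}_1(0)=0$ so that differentiating the Duhamel integral in $t$ leaves no boundary term, I obtain, with $I_{\mathrm{NL}}(t):=\int_0^t\mathcal{E}_1(t-\tau)f(u(\tau))\,\diff\tau$,
$$\norm{I_{\mathrm{NL}}(t)}_{\DS{1}}+\norm{\partial_t I_{\mathrm{NL}}(t)}_2\lesssim\int_0^t e^{-\delta(t-\tau)}e^{-p\delta_0\tau}\,\diff\tau\,\|u\|_X^p .$$
Because $\delta_0<\delta/p$, multiplying by $e^{\delta_0 t}$ keeps the right-hand side bounded uniformly in $t$, so $\|\Phi(u)\|_X\le C_0(\epsilon+\|u\|_X^p)$ for a constant $C_0$ independent of $u$ and $\epsilon$.

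For the contraction estimate (taking $p>1$; the borderline $p=1$ is handled the same way, with the requisite smallness coming from the constants in Proposition \ref{estimateslinearDWE}), I would combine \eqref{nonlinearassumption2} with H\"older's inequality to obtain $\norm{f(u(\tau))-f(v(\tau))}_2\lesssim(\norm{u(\tau)}_{2p}^{p-1}+\norm{v(\tau)}_{2p}^{p-1})\norm{u(\tau)-v(\tau)}_{2p}$, and then Corollary \ref{GNcorollary2} to bound this by $e^{-p\delta_0\tau}(\|u\|_X^{p-1}+\|v\|_X^{p-1})\|u-v\|_X$. The same time integration as above then gives $\|\Phi(u)-\Phi(v)\|_X\le C_0(\|u\|_X^{p-1}+\|v\|_X^{p-1})\|u-v\|_X$. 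Choosing first $R>0$ with $2C_0R^{p-1}<1$ and then $\epsilon_0>0$ with $C_0\epsilon_0\le R/2$, one checks that for every $\epsilon\in(0,\epsilon_0)$ the map $\Phi$ sends $B_R$ into itself and is a contraction there; the Banach fixed point theorem then yields a unique $u\in B_R\subset X$ with $\Phi(u)=u$. This $u$ is the desired global solution, lying in $C(\RR_+;\DSS)\cap C^1(\RR_+;L^2(\mu_k))$, and the bound $\|u\|_X\le R$ is precisely the claimed estimate $\norm{u(t,\cdot)}_{\DS{1}}+\norm{\partial_t u(t,\cdot)}_2\le Ce^{-\delta_0 t}$.

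I expect the main obstacle to be the careful treatment of the time derivative in the Duhamel term: one has to verify that $\mathcal{E}_1(0)=0$, so that $\partial_t I_{\mathrm{NL}}(t)=\int_0^t[\partial_t\mathcal{E}_1](t-\tau)f(u(\tau))\,\diff\tau$ with no boundary contribution, and that Proposition \ref{estimateslinearDWE} indeed controls $\norm{[\partial_t\mathcal{E}_1](s)g}_2$ by $\lesssim e^{-\delta s}\norm{g}_2$ uniformly over the three sign regimes of the discriminant $D$ (this is where the case split in that Proposition is used). A more routine but necessary point is to confirm that the fixed point, obtained as a limit in $X$, genuinely enjoys the regularity $C(\RR_+;\DSS)\cap C^1(\RR_+;L^2(\mu_k))$ and solves \eqref{dampedwaveeqn} in the mild sense, which follows from strong continuity of the propagators together with a bootstrap on the integral equation.
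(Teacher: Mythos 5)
Your proposal follows essentially the same route as the paper: Duhamel's formula plus a contraction argument in a time-weighted subspace of $C(\RR_+;\DSS)\cap C^1(\RR_+;L^2(\mu_k))$, with the linear decay supplied by Proposition \ref{estimateslinearDWE} and the nonlinearity controlled through Corollary \ref{GNcorollary2} exactly as in the paper. Your two technical departures are sound and in fact slightly cleaner: you estimate the Duhamel term by Minkowski's integral inequality, $\norm{I_{\mathrm{NL}}(t)}_{\DS{1}}\le\int_0^t\norm{\mathcal{E}_1(t-\tau)f(u(\tau))}_{\DS{1}}\diff\tau$, where the paper uses Cauchy--Schwarz in time and therefore picks up a factor of $t$ that it must absorb with the weight $(1+t)^{-1/2}e^{\delta t}$; correspondingly your norm carries the pure exponential weight $e^{\delta_0 t}$ with $\delta_0<\delta/p$, and the convolution bound $\int_0^te^{-\delta(t-\tau)}e^{-p\delta_0\tau}\diff\tau\lesssim e^{-p\delta_0 t}$ closes the estimate. (The caveat you note at $p=1$ is real: there the contraction constant does not shrink with $R$; the paper has the same issue, since its integral $\int_0^\infty(1+s)^pe^{-2(p-1)\delta s}\diff s$ also requires $p>1$, consistent with assumption \eqref{nonlinearassumption2}.)

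The one genuine gap is uniqueness. Banach's fixed point theorem gives a unique fixed point only in the ball $B_R$, i.e., uniqueness among small solutions that already decay like $e^{-\delta_0 t}$, whereas the theorem asserts uniqueness of the global solution in $C(\RR_+;\DSS)\cap C^1(\RR_+;L^2(\mu_k))$. To close this you need the separate argument the paper gives in its Part 2: take two solutions $u,v$, fix $t^*>0$, use continuity of $t\mapsto\norm{u(t,\cdot)}_{\DS{1}}+\norm{v(t,\cdot)}_{\DS{1}}$ to bound it on the compact interval $[0,t^*]$, run the same Duhamel estimate for the difference $u-v$ on $[0,t^*]$, and conclude by Gronwall's lemma that $u\equiv v$ there. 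This is routine, but it does not follow from the contraction step and has to be supplied explicitly.
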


\begin{proof}
\textbf{Part 1: Existence.} We will prove existence of the solution using the contraction theorem. Consider the Banach space
$$ X:=C(\RR_+;\DSS)\cap C^1(\RR_+;L^2(\mu_k)),$$
with the norm
$$ \norm{u}_X:= \sup_{t\geq 0} \left[ (1+t)^{-1/2} e^{\delta t} \left( \norm{u(t,\cdot)}_{\DS{1}} + \norm{\partial_t u(t,\cdot)}_2 \right)\right],$$
where $\delta>0$ is that obtained in Proposition \ref{estimateslinearDWE}. For $M>0$ which will be conveniently chosen later, consider
$$ Y:=\{ u\in X : \norm{u}_X \leq M\},$$
which is a closed subset of $X$ and thus a complete metric space in the metric induced from $X$.

Define the map
$$ S(u)(t,x):= \varphi(t,x) + \int_0^t T(f(u(s,\cdot)))(t-s,x) \diff s,$$
where $\varphi$ is the solution of the linear problem (the Cauchy problem (\ref{dampedwaveeqn}) with $f\equiv 0$), and $T(w)$ is defined generally, for $w:\RR^N\to\RR$, as the solution of the linear Cauchy problem
$$ \begin{cases}
\partial_{tt}^2 u - \Delta_k u + b\partial_t u + mu=0 \\
u(0,x)=0 \\
\partial_t u(0,x) = w(x).
\end{cases} $$
The strategy of our existence proof is to show that $S$ is a contraction on $Y$, i.e., to show that for all $u,v\in Y$ we have

\begin{description}
\item[(i)] $ \norm{S(u)}_X\leq M$ (in other words, $S$ maps $Y$ into itself), and

\item[(ii)] $ \norm{S(u)-S(v)}_X \leq c \norm{u-v}_X$ for some constant $0< c<1$.
\end{description}

\noindent Assuming these hold, then the Banach fixed point theorem guarantees the existence of a fixed point of $S$ in $Y$, which in turn is a solution to our Cauchy problem.

We will now focus on proving point ii., and i. will follow as a special case. To simplify the computations below, we introduce the notation
$$ \tilde{T}_u(t,x) := \int_0^t T(f(u(s,\cdot)))(t-s,x) \diff s.$$
We have, for any $t>0$,
\begin{equation} \label{waveproofestimate0}
\begin{aligned}
\norm{S(u)-S(v)(t,\cdot)}_{\DS{1}} +& \norm{\partial_t (S(u)-S(v))(t,\cdot)}_2
\\
&= \norm{(\tilde{T}_u-\tilde{T}_v)(t,\cdot)}_{\DS{1}} + \norm{\partial_t(\tilde{T}_u-\tilde{T}_v)(t,\cdot)}_2 .
\end{aligned}
\end{equation}

Noting that $T(f(u(s,\cdot)))-T(f(v(s,\cdot)))=T(f(u(s,\cdot))-f(v(s,\cdot)))$, we first estimate
\begin{align*}
\norm{(\tilde{T}_u-\tilde{T}_v)(t,\cdot)}_{\DS{1}}^2
&=\norm{\nabla_k (\tilde{T}_u-\tilde{T}_v)(t,\cdot)}_2^2
+ \norm{(\tilde{T}_u-\tilde{T}_v)(t,\cdot)}_2^2
\\
&=\int_{\RR^N} \left| \int_0^t \nabla_k\Big(T(f(u(s,\cdot))-f(v(s,\cdot)))\Big)(t-s,x) \diff s \right|^2 \diff \mu_k(x)
\\
&\qquad
+ \int_{\RR^N} \left|  \int_0^t T(f(u(s,\cdot))-f(v(s,\cdot)))(t-s,x) \diff s \right|^2 \diff \mu_k(x)
\\
&\leq t \int_{\RR^N} \int_0^t \left|\nabla_k\Big(T(f(u(s,\cdot))-f(v(s,\cdot)))\Big)(t-s,x)\right|^2 \diff s \diff \mu_k(x)
\\
&\qquad
+ t\int_{\RR^N} \int_0^t \left|T(f(u(s,\cdot))-f(v(s,\cdot)))(t-s,x)\right|^2 \diff s \diff \mu_k(x)
\\
&=t\int_0^t \norm{T(f(u(s,\cdot))-f(v(s,\cdot)))(t-s,\cdot)}_{\DS{1}}^2  \diff s.
\end{align*}
Here in the last line we simply interchanged the order of integration. Using the estimates from Proposition \ref{estimateslinearDWE}, we obtain
\begin{equation} \label{waveproofestimate1}
\norm{(\tilde{T}_u-\tilde{T}_v)(t,\cdot)}_{\DS{1}}^2
\leq Ct e^{-2\delta t} \int_0^t e^{2\delta s} \norm{f(u(s,\cdot))-f(v(s,\cdot))}_2^2 \diff s.
\end{equation}

Using Leibniz's formula for differentiation of an integral we have, since $T(w)(0,x)=0$ for any function $w$, that
$$ \frac{\partial}{\partial t} \left( \int_0^t T(f(w))(t-s,x) \diff s\right)
= \int_0^t \partial_t(T(f(w)))(t-s,x) \diff s.$$
Using this, we estimate
\begin{equation} \label{waveproofestimate2}
\begin{aligned}
\norm{\partial_t(\tilde{T}_u-\tilde{T}_v)(t,\cdot)}_2^2
&
= \int_{\RR^N} \left| \int_0^t \partial_t(T(f(u(s,\cdot))-f(v(s,\cdot))))(t-s,x) \diff s \right|^2 \diff\mu_k(x)
\\
&
\leq t\int_0^t \norm{\partial_t(T(f(u(s,\cdot))-f(v(s,\cdot))))(t-s,\cdot)}_2^2 \diff s
\\
&
\leq C t e^{-2\delta t} \int_0^t e^{2\delta s} \norm{f(u(s,\cdot))-f(v(s,\cdot))}_2^2 \diff s.
\end{aligned}
\end{equation}

We are left to estimate
$$ \norm{f(u(s,\cdot))-f(v(s,\cdot))}_2.$$
To this end, we use the assumption (\ref{nonlinearassumption2}) and H\"older's inequality with exponents $\frac{p}{p-1}$ and $p$ to obtain
\begin{align*}
\int_{\RR^N} & |f(u(s,x))-f(v(s,x))|^2 \diff\mu_k(x)
\\
&\qquad
\leq C \int_{\RR^N} \left[|u(s,x)|^{2(p-1)}+|v(s,x)|^{2(p-1)}\right] |u(s,x)-v(s,x)|^2 \diff\mu_k(x)
\\
&\qquad
\leq C \left[\norm{u(s,\cdot)}_{2p}^{2(p-1)} + \norm{v(s,\cdot)}_{2p}^{2(p-1)} \right] \norm{u(s,\cdot)-v(s,\cdot)}_{2p}^2.
\end{align*}

It is at this point that we use the Gagliardo-Nirenberg inequality in the form of Corollary \ref{GNcorollary2}. It is this result that imposes the restriction $1\leq p \leq \frac{N+2\gamma}{N+2\gamma-2}$. Thus, using this result, we have
\begin{equation} \label{proofwave}
\begin{aligned}
&\int_{\RR^N} |f(u(s,x))-f(v(s,x))|^2 \diff\mu_k(x)
\\
&\qquad\qquad\qquad
\leq C \norm{u(s,\cdot)-v(s,\cdot)}_{\DS{1}}^2
\left[ \norm{ u(s,\cdot)}_{\DS{1}}^{2(p-1)}
+\norm{v(s,\cdot)}_{\DS{1}}^{2(p-1)} \right].
\end{aligned}
\end{equation}

For the next step, we note that for any $w\in X$ we have
$$  \norm{w(s,\cdot)}_{\DS{1}}
\leq (1+s)^{1/2}e^{-\delta s} \norm{w}_X.$$
In light of this, and recalling that $\norm{u}_X\leq M$ and $\norm{v}_X\leq M$, (\ref{proofwave}) yields
$$\norm{f(u(s,\cdot))-f(v(s,\cdot))}_2^2
\leq C (1+s)^p e^{-2p\delta s} M^{2p-2} \norm{u-v}_X^2.$$
Replacing this in (\ref{waveproofestimate1}) and (\ref{waveproofestimate2}), we have obtained that
\begin{align*}
\norm{(\tilde{T}_u-\tilde{T}_v)(t,\cdot)}_{\DS{1}} &+ \norm{\partial_t(\tilde{T}_u-\tilde{T}_v)(t,\cdot)}_2
\\
&\qquad
\leq C \sqrt{t} e^{-\delta t} M^{p-1} \norm{u-v}_X
\left(\int_0^t (1+s)^p e^{-2(p-1)\delta s} \diff s\right)^{1/2}.
\end{align*}
The integral on the right hand side of this inequality can be bounded by a constant independently of $t$:
\begin{align*}
\int_0^t (1+s)^p e^{-2(p-1)\delta s} \diff s
&\leq \int_0^\infty (1+s)^p e^{-2(p-1)\delta s} \diff s
\\
&= \frac{e^{2(p-1)\delta}}{(2(p-1)\delta)^{p+1}} \int_{2(p-1)\delta}^\infty e^{-s} s^{p} \diff s <\infty.
\end{align*}
Consequently, going back to (\ref{waveproofestimate0}), we now have
\begin{align*}
(1+t)^{-1/2} e^{\delta t} &
	\left( \norm{S(u)-S(v)(t,\cdot)}_{\DS{1}} + \norm{\partial_t (S(u)-S(v))(t,\cdot)}_2
	\right)
\\
&\qquad\qquad\qquad\qquad\qquad\qquad\qquad
\leq C_1 \left(\frac{t}{1+t}\right)^{1/2} M^{p-1} \norm{u-v}_X,
\end{align*}
for a constant $C_1>0$. Taking supremum over all $t>0$, we finally obtain
\begin{equation}\label{waveestimateS(u)}
\norm{S(u)-S(v)}_X \leq C_1 M^{p-1} \norm{u-v}_X.
\end{equation}

We now want to bound $\norm{S(u)}_X$. On the one hand, putting $v=0$ in (\ref{waveestimateS(u)}), since $f(0)=0$ and so $S(0)=\varphi$, we obtain
$$ \norm{S(u)-\varphi}_X\leq C_1M^p.$$
On the other hand, from the our linear case estimates in Proposition \ref{estimateslinearDWE}, we obtain
$$ \norm{\varphi}_X
\leq C_2 \epsilon (\norm{u_0}_{\DS{1}}+\norm{u_1}_2),$$
for a constant $C_2>0$, and thus
\begin{equation} \label{waveestimateS(u)2}
 \norm{S(u)}_X \leq \norm{\varphi}_X + \norm{S(u)-\varphi}_X
 \leq C_1M^p + C_2 \epsilon (\norm{u_0}_{\DS{1}}+\norm{u_1}_2).
\end{equation}

Fixing $c\in (0,1)$, let $M=\left(\frac{c}{C_1}\right)^\frac{1}{p-1}$, so $C_1M^{p-1}=c$, and $\epsilon_0 = \frac{M(1-c)}{C_2(\norm{u_0}_{\DS{1}}+\norm{u_1}_2)}$. Then, for $0<\epsilon<\epsilon_0$, from (\ref{waveestimateS(u)}) and (\ref{waveestimateS(u)2}) we have
$$\norm{S(u)-S(v)}_X \leq c\norm{u-v}_X$$
and
$$ \norm{S(u)}_X \leq M,$$
i.e., points \textbf{(i)} and \textbf{(ii)} above. The proof of existence is thus complete.

\vspace{5pt}
\noindent\textbf{Part 2: Uniqueness.} Suppose that there exist $u,v\in X$ that both solve the Cauchy problem (\ref{dampedwaveeqn}). Let $\psi:=u-v$ and fix $t^*>0$. We will show that $\psi=0$ in $[0,t^*]\times\RR^N$ and since $t^*$ was arbitrary, this implies that $\psi\equiv 0$ and so $u=v$.

Both $u$ and $v$ are fixed points of the map $S$, and using the computations from the previous part we obtain that
\begin{align*}
\norm{\psi(t,\cdot)}_{\DS{1}}^2+\norm{\partial_t\psi(t,\cdot)}_{2}^2
&=\norm{\tilde{T}_u(t,\cdot)-\tilde{T}_v(t,\cdot)}_{\DS{1}}^2+\norm{\partial_t(\tilde{T}_u-\tilde{T}_v)(t,\cdot)}_2^2
\\
&\leq Cte^{-2\delta t} \int_0^t e^{2\delta s} \norm{f(u(s,\cdot))-f(v(s,\cdot))}_2^2 \diff s
\\
&\leq C t e^{-2\delta t} \int_0^t \left[\norm{u(s,\cdot)}_{\DS{1}}^{2(p-1)}+\norm{v(s,\cdot)}_{\DS{1}}^{2(p-1)} \right] \norm{\psi(s,\cdot)}_{\DS{1}}^2 \diff s.
\end{align*}
From the definition of the Banach space $X$, the map $t\mapsto \norm{u(t,\cdot)}_{\DS{1}}^{2(p-1)}+\norm{v(t,\cdot)}_{\DS{1}}^{2(p-1)}$ is continuous, and so it is bounded on the compact set $[0,t^*]$. Therefore, the inequality above becomes
\begin{align*}
\norm{\psi(t,\cdot)}_{\DS{1}}^2+\norm{\partial_t\psi(t,\cdot)}_{2}^2
&\leq C \int_0^t [\norm{\psi(s,\cdot)}_{\DS{1}}^2 + \norm{\partial_t\psi(s,\cdot)}_2^2] \diff s,
\end{align*}
and so, by Gronwall's lemma, we finally obtain
$$ \norm{\psi(t,\cdot)}_{\DS{1}}^2+\norm{\partial_t\psi(t,\cdot)}_{2}^2 =0$$
for all $0\leq t \leq t^*$. This proves uniqueness.

\textbf{Part 3: Estimates.} In the first two parts we have proved that our Cauchy problem has a unique solution, say $u\in Y$. By the definition of the $X$-norm, we have
$$ \norm{u(t,\cdot)}_{\DS{1}}+\norm{\partial u(t,\cdot)}_2
\leq (1+t)^{1/2} e^{-\delta t} M
\leq C e^{-\frac{\delta}{2}t}, $$
for any $t>0$.
\end{proof}

\noindent \textbf{Acknowledgements.} The authors wish to thank Sergey Tikhonov and Hatem Mejjaoli for pointing out missing references. The first author gratefully acknowledges financial support from EPSRC.


\bibliographystyle{plain}
\bibliography{ref}

\end{document}